\newcommand{\geometrylen}[1]{\csname Gm@#1\endcsname}
\pgfplotsset{compat=1.13,
  axis lines=left,
legend style={draw=none},
scaled ticks=false
}
\definecolor{maincolor}{gray}{0}
\newcommand{\jump}[1]{\llbracket #1 \rrbracket}
\newcommand{\avg}[1]{\lbrace\!\!\lbrace #1 \rbrace\!\!\rbrace}
\newcommand{\dgnorm}[1]{\lVert #1 \rVert_\mathrm{DG}}
\newcommand{\fullnorm}[2][]{\left\vert\kern-0.25ex\left\vert\kern-0.25ex\left\vert #2 
      \right\vert\kern-0.25ex\right\vert\kern-0.25ex\right\vert_{\mathrm{DG} #1}}
\newcommand{\fullnormfixedsize}[2][]{\vert\kern-0.25ex\vert\kern-0.25ex\vert #2 
      \vert\kern-0.25ex\vert\kern-0.25ex\vert_{\mathrm{DG} #1}}
\newtheorem{theorem}{Theorem}
\newtheorem{lemma}{Lemma}
\newtheorem{remark}{Remark}
\newtheorem{ind-assumption}{Induction Assumption}
\newtheorem{corollary}[lemma]{Corollary}
\DeclareMathOperator{\spn}{span}
\DeclareMathOperator{\ind}{ind}
\DeclareMathOperator{\im}{im}
\let\epsilon\varepsilon
\let\phi\varphi
\let\theta\vartheta
\let\myempty\varnothing
\def\hp{$hp$}
\newcommand{\de}[1]{#1_\delta}
\newcommand{\spacenameinnorm}{\mathcal{D}}
\newcommand{\spacenameinnormhom}{\mathcal{G}}
\newcommand{\wnorm}[6]{\left\| #1 \right\|_{\mathcal{K}^{#2,#3}_{#4}(#6)} }
\newcommand{\wseminorm}[5]{\left| #1 \right|_{\mathcal{K}^{#2,#3}_{#4}(#5)} }
\newcommand{\wnormdg}[5]{\left\vert\kern-0.25ex\left\vert\kern-0.25ex\left\vert #1 
      \right\vert\kern-0.25ex\right\vert\kern-0.25ex\right\vert_{\spacenameinnorm^{#3}_{#4}(#5)} }
\newcommand{\wnormdghom}[5]{\left\vert\kern-0.25ex\left\vert\kern-0.25ex\left\vert #1 
      \right\vert\kern-0.25ex\right\vert\kern-0.25ex\right\vert_{\spacenameinnormhom^{#3}_{#4}(#5)} }
\newcommand{\dhe}{\mathtt{h_e}}
\newcommand{\dpe}{\mathtt{p_e}}
\newcommand{\slope}{\mathfrak{s}}
\newcommand{\ad}{a_{\delta}}
\newcommand{\ud}{{u_\delta}}
\newcommand{\hK}{{\hat{K}}}
\newcommand{\fc}{{\mathfrak{c}}}
\newcommand{\dOmega}{{\partial\Omega}}
\newcommand{\vd}{{v_\delta}}
\newcommand{\ld}{{\lambda_\delta}}
\newcommand{\Xd}{{X_\delta}}
\newcommand{\XXd}{{X(\delta)}}
\newcommand{\Td}{{T_\delta}}
\newcommand{\Ed}{{E_\delta}}
\newcommand{\Ld}{{\Lambda_\delta}}
\newcommand{\Thd}{{\widehat{T}_{\delta}}}
\newcommand{\Th}{{\widehat{T}}}
\newcommand{\hd}{{\hat{\delta}}}
\newcommand{\mdj}{{{\mu}_{\delta j}}}
\newcommand{\ldj}{{{\lambda}_{\delta j}}}
\newcommand{\udj}{{{u}_{\delta j}}}
\newcommand{\xd}{{x_\delta}}
\newcommand{\KsgO}{{\mathcal{K}^s_\gamma(\Omega)}}
\newcommand{\JsgO}{{\mathcal{J}^s_\gamma(\Omega)}}
\renewcommand{\Re}{\operatorname{Re}}
\renewcommand{\emptyset}{\varnothing}
\title[Regularity and hp dG FE approximation of linear elliptic problems with
singular potentials]{Regularity and $hp$ discontinuous Galerkin finite element approximation of linear
  elliptic eigenvalue problems with singular potentials}
\author{Yvon Maday$^{\dagger, \star}$ }
\author{Carlo Marcati$^\dagger$}
\address{$^\dagger$ Sorbonne Université, Université Paris-Diderot SPC, CNRS, Laboratoire
  Jacques-Louis Lions, LJLL, F-75005 Paris \\
  $^\star$ Institut Universitaire de France 
   }
 \email{yvon.maday@upmc.fr, carlo.marcati@upmc.fr}
\begin{document}

\begin{abstract}
  We study the regularity in weighted Sobolev spaces of
  Schr\"{o}dinger-type eigenvalue problems, and we analyse their approximation
  via a discontinuous Galerkin (dG) $hp$ finite element method.
  In particular, we show that, for a class of singular potentials, the
  eigenfunctions of the operator belong to analytic-type non homogeneous
  weighted Sobolev spaces. Using this result, we prove that the an isotropically
  graded $hp$ dG method is spectrally accurate, and that the numerical
  approximation converges with exponential rate to the exact solution. Numerical
  tests in two and three dimensions confirm the theoretical results and provide
  an insight into the behaviour of the method for varying discretisation parameters.
\end{abstract}
\keywords{$hp$ graded finite element method, discontinuous Galerkin, elliptic eigenvalue problem,
Schr\"{o}dinger equation, weighted Sobolev spaces, elliptic regularity}

\subjclass[2010]{ 35J10, 65N25, 65N30}

\maketitle

\section{Introduction}
Many problems in physics and chemistry are modeled through elliptic eigenvalue
problems with singular potential. This is the case, for example, of the
electronic Schr\"{o}dinger equation, where the attraction between the nuclei and
the electrons is proportional to the inverse of their distance.
In this paper we propose and analyze the application of an isotropically graded
\hp{} discontinuous Galerkin (dG) finite element method for the approximation of the
solution to linear elliptic eigenvalue problems. The central idea is that, for a
wide class of singular potentials, the exact eigenfunctions are highly regular in
weighted Sobolev spaces -- i.e., they are smooth in Sobolev spaces when
multiplied by weights that are null at the singularities. The weighted Sobolev
spaces considered were introduced in the analysis of
elliptic problems in domains with non-smooth boundary \cite{Kondratev1967}; when
applied to elliptic
problems in domains with corners and edges, the graded \hp{} refinement gives rise to exponentially
convergent methods \cite{Guo1986a, Guo1986b, Schotzau2013a, Schotzau2013b}.

Our goal is firstly, therefore, to show that the solution to the eigenvalue problems
has sufficient regularity to be approximated with exponential convergence by the
discontinuous \hp{} space. Then, this can be used to prove that the solution
provided by the \hp{} dG finite element method converges with this
exponential rate.

In Section \ref{sec:notation-statement} we start by briefly introducing the
functional setting of homogeneous and non homogeneous weighted Sobolev spaces,
and by stating our eigenvalue problem. We do so in a quite general way, which
includes both singularities on the boundary and in the interior of the domain.
Note however that in three dimensions we do not consider anisotropic
approximation along the edge, thus the singularities only arise in practice
from potentials.

In Section \ref{sec:regularity} we consider the issues related to the regularity of solutions to linear elliptic problem with singular points. We are mainly interested in singular points as a consequence of singular potentials, but we place ourselves in the more general case of a conical domain. The analysis therefore applies also to corner domains in two and three dimensions, a situation that has been widely studied, see, among the others, \cite{Costabel2012, Egorov1997, Kozlov1997,Mazya2010}.

Let us consider a conical domain, i.e., a bounded domain $\Omega \subset
\mathbb{R}^d$ such that, after localization of the singularity at the origin, in
polar coordinates,
$\Omega\cap S_{d-1} = (0, \zeta ) \times U $,
with $\zeta > 0$, $U\subset \mathbb{S}_{d-1}$, and $S_{d-1}$ is the $d-1$ dimensionsal sphere, .
While most of the literature is concerned with the analysis in homogeneous
weighted Sobolev spaces, denoted here as $\KsgO$, here we focus on inhomogeneous
spaces, denoted as $\JsgO$. The latter
spaces have been studied mainly as the domain of solutions to elliptic problems
in corner domains with Neumann boundary conditions. The similarity arises from
the fact that problems with singular potential and Neumann boundary problems in
domains with conical points share solutions that a priori, have nonzero imposed
value at the singular points. 

  The reason why a regularity result in non homogeneous weighted spaces is more
  relevant than its homogeneous counterpart lies in the fact that, by taking
  wider spaces --- in general, $\mathcal{K}^{s,p}_\gamma
  (\Omega)\subset\mathcal{J}^{s,p}_\gamma (\Omega)$ --- we can obtain an
  estimate with a bigger weight $\gamma$.
  This is relevant since it can, in some situations, give insight into the
  boundedness of a function.

From the point of view of the Mellin transformation, working in non homogeneous
spaces consists in isolating some singularities of the Mellin transform of the
solution, bounding the rest of the function using the theory of homogeneous
spaces, and finally bounding the terms in the expansion of the solution
corresponding to the singularities via embeddings in higher order non homogeneous
spaces. To illustrate this, consider the Mellin symbol $\mathfrak{L}$
related to a Laplacian operator in a conical domain given by $\mathfrak{L}
(\omega, \partial_\omega, \lambda) = -\left( (\lambda+d-2)\lambda + \Delta_U
\right)$, with $-\Delta_U$ representing the Laplace-Beltrami operator on
$U\subset \mathbb{S}_{d-1}$, in the case where $\Delta_U$ has a null eigenvalue (corresponding to spherically symmetric functions). The symbol has a single (resp. double) zero for $\lambda = 0$ in three (resp. two) dimensions. In three dimension, this zero corresponds to a constant in the asymptotic expansion of the solution near the singularity; in two dimensions, we would have a constant and logarithmic term $\log(|x|)$, but this would not be in $H^1(\Omega)$. In the asymptotic expansion of the solution near the singularity, we will therefore find a constant, followed by a term due to the potential or the geometry of the domain.
The former case depends on the asymptotic expansion of the potential near the singularity, while the latter depends on the eigenvalues of $\Delta_U$. In the following sections, we will suppose that the term following the constant in the expansion goes as $|x|^\epsilon$ for an $0 <\epsilon< 1$. 
As an example of a potential that would generate such a behavior, consider
$V(x) = |x|^{-2+\epsilon}$. A geometry causing an expansion containing
$|x|^\epsilon$ would instead be one such that 
$\epsilon(\epsilon+d-2) \in \sigma(-\Delta_U, B_{\partial U})$,
i.e., there exists a function $\hat{u}:\mathbb{R}^+ \to H^s(U)$, $s\geq 2$ such that
\begin{equation*}
   \left(\mathfrak{L} \hat{u}\right)(\lambda) = -\left((\lambda+d-2)\lambda - (\epsilon+d-2)\epsilon\right) \hat{u}(\lambda).
\end{equation*}
As it can easily be seen, $\epsilon$ is indeed a zero of the symbol above. More
practically, this happens if we consider a two dimensional wedge with aperture
$\pi/\epsilon$ (in this two dimensional wedge case we have therefore also $\epsilon\geq 1/2$), as it will be outlined later in Section \ref{sec:problem}.

Returning to weighted Sobolev spaces, in light of the analysis of the operator
given above, we can consider a simple case by neglecting the higher order terms,
and consider a function $v(x) = v(0) + |x|^\epsilon$, with $v(0)\neq 0$ as a
model of our solution. As long as $|x|\ll 1$, those terms are indeed the
predominant ones. The norm $\|r^{-d/p} v \|_{L^p(\Omega)}$ is clearly unbounded,
thus $v\notin \mathcal{K}^{s,p}_{\gamma}(\Omega)$ for any $s\in \mathbb{N}$ and
any $\gamma \geq d/p$. It is easy to see, though, that the statement $v\in
\mathcal{K}^{s,p}_{\gamma}(\Omega)$ for any $s\in\mathbb{N}$ and $\gamma< d/p$
does not tell the whole story, since $v-v(0)\in\mathcal{K}^{s,
  p}_{\gamma}(\Omega)$ also for larger values of $\gamma \in (d/p, d/p+\epsilon)$. The non homogeneous weighted spaces give therefore a framework where functions such as $v$ can be treated more naturally than in homogeneous spaces.

We define the spaces treated above in more detail and outline the relationships between the homogeneous and non homogeneous ones in the following Section \ref{sec:weighted}. Then, in Section \ref{sec:problem} we specify the class of operators we treat here. The main regularity result for those operators is then given in Section \ref{sec:regularity}. Specifically, we give an elliptic regularity result in non homogeneous weighted Sobolev spaces for operators with singular potential, and we follow with an observation on how this can be used as a basis to obtain ``analytic regularity'' in weighted spaces -- see Corollary \ref{cor:analytic}.

In Section \ref{sec:numerical-appx} we introduce the \hp{} discontinuous
Galerkin method we use to approximate the solution to the eigenproblems
considered and we prove our convergence results.

Historically, dG methods have been originally introduced for the approximation of first order
steady equations in \cite{Reed1973, Crouzeix1973} in the context of neutron transport
equations and of the Stokes equation. For second order elliptic problems, the development of discontinuous Galerkin methods is based on the ideas in \cite{Nitsche1972}, with interior penalty methods
being introduced in \cite{Wheeler1978} and developed in \cite{Arnold1982}. A wide range of different
methods have been proposed throughout the years, including, among others, the
local discontinuous Galerkin (LDG) method \cite{Cockburn1998}, and the already
mentioned class of interior penalty (IP) methods, in its symmetric (SIP), nonsymmetric
(NIP) and incomplete (IIP) versions. See \cite{Riviere2008, Hesthaven2008, DiPietro2011} for an overview of discontinuous Galerkin methods.

The \hp{} version of finite element (FE) methods, introduced in \cite{Gui1986a,
  Gui1986b, Gui1986c} in one dimension and in \cite{Guo1986a, Guo1986b} in more
dimensions, combines adaptivity in space in low regularity regions with
adaptivity in polynomial degree in high regularity regions. When applied to
elliptic problems with point singularities, the numerical solutions obtained
with the \hp{} FE method can converge with exponential rate, provided that the
exact solutions belong to the spaces $\mathcal{K}^\varpi_\gamma(\Omega)$ or
$\mathcal{J}^\varpi_\gamma(\Omega)$ defined in Section \ref{sec:weighted}. We also signal the recent research on \hp{} methods in polygonal and polyhedral domains, see, among others, \cite{Costabel2005,
  Stamm2010, Schotzau2013a, Schotzau2013b,
   Schotzau2016}.

 We focus on the symmetric version of the interior penalty method, since the
 original problem is itself symmetric and preserving symmetry improves both the
 numerical stability and the theoretical convergence rate of the method. In
 particular, as shown in Theorem \ref{th:convergence}, the eigenvalues can be
 shown to converge at a rate twice that of the eigenfunctions. Our convergence
 analysis follows closely what has been shown in \cite{Antonietti2006}, with
 some minor differences related to the specificity of the regularity of the
 eigenspaces and to the approximation of the \hp{} space.

 We conclude, in Section \ref{sec:numerical-results}, with some numerical tests
 in two and three dimensions, in which we confirm our theoretical results and
 investigate the role of the sources of numerical error that we did not consider
 in the analysis. Those tests can also be used, in practice, to devise
 specifically crafted \hp{} spaces for more complex problems.

\section{Notation and statement of the problem} 
\label{sec:notation-statement}
Let us consider a bounded domain $\Omega\subset \mathbb{R}^d$, $d=2,3$ which will be specified later and let $\mathcal{C}$ be a set of isolated points in $\Omega$; for the sake of simplicity we consider the case of a single point $\mathcal{C} = \{\fc\}$; the theory can be trivially extended to the case of a finite number of points. We then denote by $r = r(x)$ the distance $|x-\mathfrak{c}|$ where $|\cdot|$ is the euclidean norm of $\mathbb{R}^d$ (it would be a smooth function representing the distance from the nearest point in $\mathcal{C}$ if there were more than one).

We denote by $C\in \mathbb{R}^+$ a generic constant independent of the discretization, and write $A \lesssim B$ (resp. $A\gtrsim B$) if $A\leq C B$ (resp. $A \geq C B $) and $A\simeq B$ if both $A\lesssim B$ and $A\gtrsim B$ hold.
\subsection{Weighted Sobolev spaces}
\label{sec:weighted}
For $k\in \mathbb{N}$ and $1 \leq p < \infty$e introduce on a set $D\subset \Omega$ the homogeneous weighted norm
\begin{equation}
  \label{eq:weighted-norm}
      \wnorm{u}{k}{p}{\gamma}{\beta}{D}^p  = \sum_{j=0}^k \sum_{|\alpha|=j}\|r^{j-\gamma}\partial^\alpha u\|^p_{L^{p}(D)} 
\end{equation}
with seminorm
\begin{equation*}
  \wseminorm{u}{k}{p}{\gamma}{D}^p  =  \sum_{|\alpha|=k}\|r^{k-\gamma}\partial^\alpha u\|^p_{L^{p}(D)} 
\end{equation*}
and denote the space $\mathcal{K}^{k,p}_{\gamma}(\Omega)$ as the space of all
functions with bounded $\mathcal{K}^{k,p}_{\gamma}(\Omega)$ norm. The case where
$p=\infty$ follows from the usual modification in Sobolev spaces.
We also introduce the inhomogeneous norm
\begin{equation}
  \label{eq:weighted-nonhom-norm}  
  \|u\|^p_{\mathcal{J}^{k,p}_{\gamma}(\Omega)} = \sum_{j=0}^k \sum_{|\alpha|=j} \|r^{\max(-\gamma+|\alpha|, \rho)} \partial^\alpha u\|^p_{L^p(\Omega)},
\end{equation}
for $\gamma-d/p<k$ and $\rho\in (-d/p, -\gamma+k]$, if $1\leq p < \infty$,
$\rho\in [0, -\gamma+k]$ if $p=\infty$. We write $\mathcal{K}^{k}_\gamma
(\Omega)= \mathcal{K}^{k,2}_\gamma(\Omega)$ and
$\mathcal{J}^{k}_\gamma=\mathcal{J}^{k,2}_\gamma$. We also remark that, for
$k\geq 1$ and $\gamma -d/2<0$, $\mathcal{K}^k_\gamma(\Omega) =
\mathcal{J}^k_\gamma(\Omega)$. Furthermore,  if $v\in
\mathcal{J}^k_\gamma(\Omega)$ for $k \geq 1 $ and $0<\gamma-d/2<1$
(condition under which $|v(c)| \lesssim \| v\|_{\mathcal{J}_\gamma^k(\Omega)}$),
\begin{equation}
  \label{eq:equivalence-hom-nonhom}
  \|v-v(\mathfrak{c})\|_{\mathcal{K}^k_\gamma(\Omega)} +|v(\mathfrak{c})| \simeq \|v\|_{\mathcal{J}^k_\gamma(\Omega)}.
\end{equation}
On the boundary, for integer $k\geq 1$, we introduce the space $\mathcal{K}^{k-1/p,p}_{\gamma-1/p}(\partial\Omega)$ (resp. $\mathcal{J}^{k-1/p,p}_{\gamma-1/p}(\partial\Omega)$) of traces of functions from $\mathcal{K}^{k,p}_{\gamma}(\Omega)$ (resp. $\mathcal{J}^{k,p}_{\gamma}(\Omega)$) with norm
\begin{equation*}
  \| u\|_{\mathcal{K}^{k-1/p,p}_{\gamma-1/p}(\partial\Omega)} = \inf\{\|v\|_{\mathcal{K}^{k,p}_{\gamma}(\Omega)}, v_{|_{\partial \Omega}} = u \},
\end{equation*}
and
\begin{equation*}
  \| u\|_{\mathcal{J}^{k-1/p,p}_{\gamma-1/p}(\partial\Omega)} = \inf\{\|v\|_{\mathcal{J}^{k,p}_{\gamma}(\Omega)}, v_{|_{\partial \Omega}} = u \}.
\end{equation*}
Note that on portions of the boundary not touching the singularity $\fc$, the
weighted trace spaces coincide with classical Sobolev trace spaces.

Finally, we introduce the spaces 
\begin{equation*}
    \mathcal{J}_\gamma^{\varpi,p}(\Omega) 
= \left\{v\in \mathcal{J}^{\infty,p}_\gamma(\Omega): \exists  A,C\in \mathbb{R}\text{ s.t. }\forall k\in\mathbb{N}, \,\|v\|_{\mathcal{J}^{k,p}_\gamma(\Omega)}\leq CA^kk!  \right\},
\end{equation*}
and 
\begin{equation*}
  \mathcal{K}_\gamma^{\varpi,p}(\Omega) 
= \left\{v\in \mathcal{K}^{\infty,p}_\gamma(\Omega): \exists  A,C\in \mathbb{R}\text{ s.t. }\forall k\in\mathbb{N},\,|v|_{\mathcal{K}^{k,p}_\gamma(\Omega)}\leq CA^kk! \right\}.
\end{equation*}
 In the following, for an $S\subset \Omega$, we denote by $(\cdot, \cdot)_S$ the $L^2(S)$ scalar product and by $\| \cdot \|_S$ the $L^2(S)$ norm. 
  \subsection{Statement of the problem}
  \label{sec:problem}
  Let us now assume that in a neighborhood of $\fc$,
  the domain $\Omega\subset \mathbb{R}^d$ is conical, i.e., there exists a ball $B_\zeta(\fc)$ centered in
  $\fc$ with radius $\zeta>0$ such that, going from a cartesian to a polar representation,
  \begin{equation*}
    \Omega\cap B_\zeta(\fc) = (0,\zeta)\times U
  \end{equation*}
  where $U\subset \mathbb{S}_{d-1}$ the $d-1$ dimensional sphere, and $\partial U $ is smooth.
              
  In this domain we set the problem of finding $\lambda \in \mathbb{R}$ and $u\in H^1(\Omega)$ such that $\| u \| = 1$ and
  \begin{equation}
    \label{eq:mainproblem} 
    \begin{aligned}
      L(x, \partial_x) u &= -\Delta u + V (x) u = \lambda u &&\text{ in } \Omega \\
      B(x, \partial_x) u &= 0&&\text{ on } \partial\Omega
    \end{aligned}
  \end{equation}
  where 
    $B(x, \partial_x)$ is a boundary operator with analytic coefficients of order $m\leq 1$ covering $L(x,\partial_x)$, i.e., such as the problem defined by $(L,B)$ is elliptic. 
Furthermore, $V:\mathbb{R}^d\to\mathbb{R}_+$ is a potential such that
$V\in\mathcal{K}^{\varpi,\infty}_{\epsilon-2}(\Omega)\cap L^p(\Omega)$ for some $0<\epsilon\leq
1$ and $p>d/2$, and $V$ is bounded from below by a positive constant.
We will omit the dependence of $L$ and $B$ on $x$ and $\partial_x$ when it will not be strictly necessary.
Finally, we denote by $a(\cdot, \cdot)$ be the bilinear form associated to $L$, i.e.
\begin{equation*}
  a(u,v) = (\nabla u, \nabla v) + (V u, v).
\end{equation*}

We recall the definition of the Mellin transformation 
\begin{equation*}
  \hat{u}(\lambda) = \left(\mathcal{M}_{r\to \lambda}\right)u = \int_{\mathbb{R}_+} u(r, \omega) r^{-\lambda-1} dr
\end{equation*}
where $(r,\omega)\in \mathbb{R}_+\times \mathbb{S}_{d-1}$ are spherical coordinates. 
We denote the leading part of the operator $L$ in \eqref{eq:mainproblem} by $L^0
= -\Delta$ and introduce the Mellin symbol $\mathfrak{L}(\omega, \partial_\omega, \lambda)$
of the leading part $L^0$, such as
\begin{equation}
  \label{eq:mellin-symbol}
  r^{-2} \mathfrak{L}(\omega, \partial_\omega, r\partial_r)  =  L^0(x, \partial_x),
\end{equation}
i.e., $\mathfrak{L} (\omega, \partial_\omega, \lambda) = -\left( (\lambda+d-2)\lambda + \Delta_U \right)$.

We also suppose, for ease of notation, that the smallest nonzero eigenvalue of
  the Laplace-Beltrami operator $-\Delta_U$ on $U$  with boundary operator
  $B_{\partial U}$ on
  $\partial U$ is bigger than $\epsilon(\epsilon+d-2)$, i.e.,
  \begin{equation}
    \label{eq:condition-beltrami}
    \min\left\{ \mu>0: \mu\in \sigma( -\Delta_U, B_{\partial U}) \right\}
    \geq \epsilon(\epsilon+d-2).
  \end{equation}
This condition, combined with
$V\in\mathcal{K}^{\varpi,\infty}_{\epsilon-2}(\Omega)$ guarantees that the
positive pole with smallest real part of the Mellin transform of the solution
$\hat{u}(\lambda)$ lies in the half-space $\{ \Re(\lambda) \geq \epsilon\}$, see
\cite[Chapter 6]{Kozlov1997}.
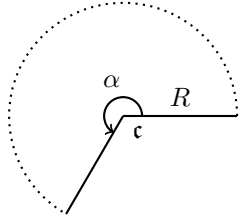
\begin{figure}
  \centering
  \begin{tikzpicture}
    \def\zoom{1.5}
    \coordinate (O) at (0,0)  {};
    \node[below right] at (O) {$\fc$};
    \coordinate (A) at (\zoom,0)  {};
    \coordinate (B) at (-\zoom*.5,{-\zoom*sqrt(3)/2})  {};
    \draw[thick] (A) -- (O) node[pos=.5, above] {$R$};
    \draw[thick] (B) -- (O);
    \pic["$\alpha$"{anchor=south},draw,->,thick,angle radius=.25cm,angle eccentricity=1.2] {angle = A--O--B};
    \pic [draw,dotted,thick,angle radius=\zoom cm, angle eccentricity=1] {angle = A--O--B};
  \end{tikzpicture}
  \caption{Two dimensional wedge}
  \label{fig:wedge}
\end{figure}
  As an example of condition \eqref{eq:condition-beltrami}, consider a two dimensional domain that coincides near the origin with the wedge with angle of aperture $\alpha\in(0,2\pi)$ \[\{0<r<R, \theta\in(0,\alpha)\},\] 
  where $r$ and $\theta$ are polar coordinates, as in Figure \ref{fig:wedge}. On the boundary we impose either homogeneous Dirichlet or
  homogeneous Neumann boundary conditions. Then,
  \eqref{eq:condition-beltrami} is equivalent to 
  \begin{equation*}
    \alpha \leq \frac{\pi}{\epsilon}.
  \end{equation*}
\section{Regularity of the solution}
\label{sec:regularity}
The first lemma concerns the regularity of the solution of \eqref{eq:mainproblem}: we specialize here the results of \cite{Kozlov1997}.
  We also introduce the set $I_d$ as 
  \begin{equation*}
    I_d =
    \begin{cases}
      (-1, \epsilon)\setminus\{0\} & \text{ if }d=3\\
      [0, \epsilon)& \text{ if }d=2.\\
    \end{cases}
  \end{equation*}
  In what follows, we analyze the set of weighted spaces where the operator
  $(L,B)$ is an isomorphism.
      We place ourselves in the Hilbertian setting ($p=2$). In general, we avoid
  considering the cases where $\gamma-d/2\in \mathbb{N}$, since for those
  $\gamma$ the operator is not Fredholm, the exception being $\gamma = 1$ when
  $d=2$, since $\mathcal{J}^1_1(\Omega) = H^1(\Omega)$.
  Given $\gamma\in I_d$, the image of the operator $(L,B)$ applied to $\mathcal{J}^k_\gamma(\Omega)$
  is given by $\mathcal{J}^{k-2}_{\gamma-2}(\Omega)\times
  \mathcal{J}^{k-m-1/2}_{\gamma-m-1/2}(\partial \Omega) $. In the following
 lemma we will show that the operator is an isomorphism between those spaces.

  The idea of the proof is then to start from in homogeneous weighted spaces
  spaces and then to extend the results to the non homogeneous ones, by function decomposition.
\begin{lemma}
  \label{lemma:solution-regularity}
  The operator $(L,B)$ is an isomorphism  between the spaces
    \begin{equation}
      \label{eq:spaces}
      \mathcal{J}^{k}_{\gamma}(\Omega) \to \mathcal{J}^{k-2}_{\gamma-2}(\Omega)\times \mathcal{J}^{k-m-1/2}_{\gamma-m-1/2}(\partial \Omega) 
    \end{equation}
    for $\gamma-d/2\in I_d$, 
{$k\geq 2$}.
                              \end{lemma}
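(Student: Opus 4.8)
The plan is to establish the isomorphism in two stages: first in the homogeneous
spaces $\mathcal{K}^k_\gamma(\Omega)$, and then to lift the result to the
non homogeneous spaces $\mathcal{J}^k_\gamma(\Omega)$ by a function
decomposition that peels off the constant term in the asymptotic expansion near
$\fc$.

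\emph{Step 1: the homogeneous case.} First I would treat the leading-order,
constant-coefficient operator $(L^0,B^0)=(-\Delta,B^0)$ localized at $\fc$. Via
the Mellin transformation \eqref{eq:mellin-symbol}, the Fredholm property of
$-\Delta$ between $\mathcal{K}^k_\gamma(\Omega)$ and
$\mathcal{K}^{k-2}_{\gamma-2}(\Omega)\times\mathcal{K}^{k-m-1/2}_{\gamma-m-1/2}(\partial\Omega)$
is governed by the poles of the inverse Mellin symbol $\mathfrak{L}(\omega,\partial_\omega,\lambda)^{-1}$, i.e.\ by the
spectrum of $(-\Delta_U,B_{\partial U})$. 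The condition
$\gamma-d/2\in I_d$ together with \eqref{eq:condition-beltrami} ensures that the
line $\Re(\lambda)=\gamma-d/2$ is free of these poles, so that $(L^0,B^0)$ is an
isomorphism on the homogeneous scale; this is the content of the classical
Kondrat'ev theory, e.g.\ \cite[Chapter 6]{Kozlov1997}. I would then pass from
$L^0$ to the full operator $L=-\Delta+V$ by treating $V$ as a perturbation:
since $V\in\mathcal{K}^{\varpi,\infty}_{\epsilon-2}(\Omega)$, multiplication by
$V$ maps $\mathcal{K}^k_\gamma(\Omega)$ into $\mathcal{K}^{k-2}_{\gamma-2}(\Omega)$
(the weight $\epsilon-2$ of $V$ combines correctly with the shift by $2$ in the
target), and it is compact relative to the leading part. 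Hence $(L,B)$ is
Fredholm of index zero on the homogeneous scale, and injectivity then upgrades
this to an isomorphism.

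\emph{Step 2: from $\mathcal{K}$ to $\mathcal{J}$.} For $\gamma-d/2<0$ the two
scales coincide by the remark following
\eqref{eq:weighted-nonhom-norm}, so in $d=3$ the range $(-1,0)$ and in $d=2$
the relevant negative part are immediate. The genuinely new case is
$0<\gamma-d/2<\epsilon$ (with $d=3$, and similarly the $\gamma=1$ borderline
when $d=2$). Here I would use the equivalence
\eqref{eq:equivalence-hom-nonhom}: any $u\in\mathcal{J}^k_\gamma(\Omega)$ splits
as $u=u(\fc)+(u-u(\fc))$ with $u-u(\fc)\in\mathcal{K}^k_\gamma(\Omega)$. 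Applying
$L=-\Delta+V$ to the constant $u(\fc)$ gives $-\Delta u(\fc)=0$ plus
$V\,u(\fc)$, and since $V\in\mathcal{K}^{\varpi,\infty}_{\epsilon-2}(\Omega)$
one checks $V\,u(\fc)\in\mathcal{K}^{k-2}_{\gamma-2}(\Omega)$ precisely because
$\gamma-2<\epsilon-2+d/2$ in the admissible range; likewise $Bu(\fc)$ lands in
the correct boundary space. Combining this with Step 1 applied to
$u-u(\fc)$ shows that $(L,B)$ maps $\mathcal{J}^k_\gamma$ boundedly into the
stated target. For surjectivity and injectivity I would run the argument in the
other direction: given data in
$\mathcal{J}^{k-2}_{\gamma-2}(\Omega)\times\mathcal{J}^{k-m-1/2}_{\gamma-m-1/2}(\partial\Omega)$,
the homogeneous isomorphism of Step 1 (valid for a slightly smaller weight, say
any $\gamma'<d/2$) produces a solution in $\mathcal{K}^k_{\gamma'}$, whose
asymptotics near $\fc$ consist of a constant plus a remainder of order
$|x|^\epsilon$; the constant is exactly the $\mathcal{J}$-part and the remainder
lies in $\mathcal{K}^k_\gamma$ thanks to
\eqref{eq:condition-beltrami}, which keeps the next exponent above $\gamma-d/2$.

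\emph{Main obstacle.} The delicate point is the surjectivity step, i.e.\
controlling the asymptotic expansion of the solution near $\fc$ and showing that
\emph{exactly} the constant term is gained in passing to the non homogeneous
space, with the remainder staying in the homogeneous space. This is where the
hypothesis \eqref{eq:condition-beltrami} and the weight $\epsilon-2$ of the
potential enter essentially: they guarantee that the first nonconstant exponent
in the Mellin expansion exceeds $\gamma-d/2$, so no intermediate singular term
spoils the decomposition. Verifying that $V\,u(\fc)$ and the perturbation
$V\,(u-u(\fc))$ fall into the correct homogeneous target spaces — a calculation
with the weighted norms \eqref{eq:weighted-norm} using
$V\in\mathcal{K}^{\varpi,\infty}_{\epsilon-2}(\Omega)$ — is routine but is the
technical heart of the transfer from $\mathcal{K}$ to $\mathcal{J}$.
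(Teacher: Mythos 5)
Your Step 2 is essentially the paper's proof: the decomposition $u = u(\fc) + (u-u(\fc))$, the reduction to the homogeneous scale at a weight below $d/2$ where the operator is invertible, and the identification of the constant as the only term gained in passing to $\mathcal{J}^k_\gamma(\Omega)$ are exactly the ingredients used there. The genuine problem is in your Step 1, where you assert that the absence of poles of $\mathfrak{L}(\omega,\partial_\omega,\lambda)^{-1}$ on the line $\Re\lambda = \gamma-d/2$ makes $(L^0,B^0)$ an isomorphism on the homogeneous scale for \emph{every} $\gamma-d/2\in I_d$, and hence that $(L,B)$ is Fredholm of index zero there. This is false, and it fails precisely in the range the lemma is about. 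Absence of poles on the line gives only the Fredholm property; invertibility additionally requires that no pole lie in the strip between that line and the energy line $\Re\lambda = 1-d/2$ on which coercivity yields unique solvability. For $\gamma-d/2\in(0,\epsilon)$ the pole at $\lambda=0$ (simple for $d=3$, double for $d=2$) sits inside that strip, and the index of $\mathcal{F}_\gamma$ on the $\mathcal{K}$-scale is $-1$ (and $+1$ for $d=2$, $\gamma-1\in(-1,0)$): the homogeneous operator is injective but its image has codimension one. If your Step 1 were correct as stated, the non-homogeneous spaces would be superfluous and the lemma trivial; the whole point of working in $\mathcal{J}^k_\gamma(\Omega)$ is to absorb this index defect by adjoining the constants.

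The error is not fatal, because your Step 2 never actually uses the claimed isomorphism at the large weights: there you invoke invertibility only for $\gamma'<d/2$ and recover the missing codimension-one piece as the constant $u(\fc)$ in the asymptotic expansion, which is what the paper does. To repair the write-up, restrict Step 1 to $\gamma-d/2\in(-1,0)$ (the paper obtains the isomorphism there from coercivity on $H^1(\Omega)=\mathcal{K}^1_1(\Omega)$ together with the absence of poles in the intermediate strip, citing Corollary 6.3.3 of Kozlov--Maz'ya--Rossmann) and let the decomposition of Step 2 carry the entire burden for $\gamma-d/2\in(0,\epsilon)$. Two smaller points you gloss over: in $d=2$ the zero of the symbol at $\lambda=0$ is double, and the second attached function $\log r$ must be excluded because it does not belong to $H^1(\Omega)$; and the case $d=2$, $\gamma=1$ is the one admissible integer weight, handled separately via $\mathcal{J}^1_1(\Omega)=H^1(\Omega)$. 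Finally, you do not address the a priori estimate $\|u\|_{\mathcal{J}^s_\gamma(\Omega)}\lesssim \|f\|+\|g\|$, which the paper derives explicitly through the interpolation bound $|u(\fc)|\leq\delta\|u\|_{\mathcal{J}^2_\gamma(\Omega)}+C_\delta\|u\|_{\mathcal{J}^1_{\gamma-1}(\Omega)}$ and absorption; this can alternatively be obtained from the open mapping theorem once bounded bijectivity between the Banach spaces is established, so it is an omission of detail rather than of substance.
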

  \begin{proof}
    Let $ \mathcal{F}_\gamma= (L_\gamma,B_\gamma) : \mathcal{K}^2_\gamma (\Omega)\to \mathcal{K}^{0}_{\gamma-2}(\Omega)\times \mathcal{K}^{3/2-m}_{\gamma-m-1/2}(\partial \Omega)$.
                    The operator $\mathcal{F}_\gamma$ is Fredholm for all $\gamma-d/2\notin \mathbb{N}$ \cite{Kozlov1997}; its index is defined as 
    \begin{equation*}
      \ind \mathcal{F}_\gamma = \dim(\ker \mathcal{F}_\gamma) - \dim (\ker \mathcal{F}^*_{\gamma} ).
    \end{equation*}
    In the case $d=3$ the index is given by
    \begin{equation*}
      \ind \mathcal{F}_\gamma =
      \begin{cases}
        0 &\text{ if } \gamma-3/2\in (-1, 0)  \\
        -1 &\text{ if } \gamma-3/2\in (0, \epsilon) .
      \end{cases}
    \end{equation*}
    When $d=2$, instead, 
    \begin{equation*}
      \ind \mathcal{F}_\gamma =
      \begin{cases}
        1 &\text{ if } \gamma-1\in (-1, 0)  \\
        -1 &\text{ if } \gamma-1\in (0, \epsilon) .
      \end{cases}
    \end{equation*}

      Let us first consider the case $\gamma-3/2\in(-1,0)$ and $d=3$. The operator $\mathcal{F}_\gamma$ is coercive on $H^1(\Omega) = \mathcal{J}^1_1(\Omega)=\mathcal{K}^1_1(\Omega)$. It is then an isomorphism between the spaces $\mathcal{K}^2_1(\Omega)$ and $\mathcal{K}^{0}_{-1}(\Omega)\times\mathcal{K}^{3/2-m}_{1/2-m}(\partial\Omega)$. Therefore, $\mathcal{F}_\gamma$ is an isomorphism between the spaces \eqref{eq:spaces} for all $-1 <\gamma - 3/2 < 0$, see \cite[Corollary 6.3.3]{Kozlov1997}.

      In the case where $\gamma =1$ and $d=2$, the uniqueness of the solution in $H^1(\Omega)$ implies that the operator is an isomorphims between the spaces \eqref{eq:spaces}.

    Let us now consider the case $\gamma-d/2\in (0,\varepsilon)$ and go back to the generic case $d=2,3$. We introduce $\beta$ such that $\beta-d/2\in (-1,0)$ and consider a solution $u \in \mathcal{K}^s_{\beta}(\Omega)\cap H^1(\Omega)$, $s\geq 2$, to 
    \begin{equation*}
      \begin{aligned}
        L_\beta u &=f  &&\text{ in }\Omega   \\
        B_\beta u &= g &&\text{ on } \partial\Omega
      \end{aligned}
    \end{equation*}
    for $(f,g)\in \mathcal{J}^{s-2}_{\gamma-2}(\Omega)\times \mathcal{J}^{s-m-1/2}_{\gamma-m-1/2}(\partial\Omega)$. The Mellin transform $\mathfrak{L}(\lambda)$ of the principal part of $L$ has a single zero at $\lambda=0$ if $d=3$ and a double zero if $d=2$. We can decompose $u$ as  
  \begin{equation*}
    u = w + u(\mathfrak{c}) 
  \end{equation*}
  where  $w \in \mathcal{K}^{s}_{\gamma}(\Omega)$. This is straightforward for $d=3$; for $d=2$ there could be a term proportional to $\log(r)$, but this term would not belong to $H^1(\Omega)$. Then, $w$ is solution to 
\begin{equation*}
      \begin{aligned}
        L_\gamma w &= f - V u (\mathfrak{c})&&\text{ in }\Omega   \\    
        B_\gamma w & = g - B_\gamma u(\mathfrak{c})&&\text{ on } \partial\Omega 
      \end{aligned}
    \end{equation*}
In this case $\ind \mathcal{F}_\gamma=-1$ but the right hand side in the above equation belongs to the image of $\mathcal{F}_\gamma$, by definition. Furthermore, $f-Vu(\fc)\in\mathcal{K}^{s-2}_{\gamma-2}(\Omega)$ and $g-B_\gamma u(\fc)\in \mathcal{K}^{s-1/2-m}_{\gamma-1/2-m}(\partial\Omega)$. Therefore,
    \begin{equation*}
      \| w\|_{\mathcal{K}^s_\gamma(\Omega)} \leq C\left( \|f\|_{\mathcal{K}^{s-2}_{\gamma-2} (\Omega)} +\| g- Bu(\fc)\|_{\mathcal{K}^{s-1/2-m}_{\gamma-1/2-m}(\partial\Omega)}+ |u(\mathfrak{c})|\right)
    \end{equation*}
    We now conclude as in \cite[Theorem 7.1.1]{Kozlov1997}: since for any $\delta>0$, there exists a $C_\delta$ such that
    \begin{equation*}
      | u(\mathfrak{c})| \leq \delta \| u \|_{\mathcal{J}^2_\gamma(\Omega)} + C_\delta \| u \|_{\mathcal{J}^1_{\gamma-1}(\Omega)} ,
    \end{equation*}
    we can write, for $s\geq 2$,
    \begin{align*}
      \|u \|_{\mathcal{J}^s_\gamma(\Omega)} &\leq C \left( \|w\|_{\mathcal{K}^{s}_\gamma(\Omega) }+ |u(\mathfrak{c})|\right)\\
                                            & \leq C\left(\delta \| u \|_{\mathcal{J}^2_\gamma(\Omega)} + \|f\|_{\mathcal{K}^{s-2}_{\gamma-2} (\Omega)}+ C_\delta \| u \|_{\mathcal{J}^1_{\gamma-1}(\Omega)} + \| g \|_{\mathcal{J}^{s-m-1/2}_{\gamma-m-1/2}(\partial\Omega)}\right) .
    \end{align*}
    Since $\gamma-1 \leq d/2$, by the arguments of the first part of the proof
    \begin{align*}
      \| u \|_{\mathcal{J}^1_{\gamma-1}(\Omega)} \leq  \| u \|_{\mathcal{J}^2_{\gamma-1}(\Omega)}
      &\leq C\left( \|f\|_{\mathcal{J}^0_{\gamma-3}(\Omega)} + \|g\|_{\mathcal{J}^{3/2-m}_{\gamma-3/2-m}(\Omega)} \right)\\
      & \leq C\left( \|f\|_{\mathcal{K}^{s-2}_{\gamma-2} (\Omega)}+\| g \|_{\mathcal{J}^{s-1/2-m}_{\gamma-1/2-m}(\partial\Omega)}\right)
    \end{align*}
    for all $s\geq 2$. The choice of a sufficiently small $\delta$ then concludes the proof.
                                                                    \end{proof}
In the following lemma we extend the estimates for corner domains developed in
\cite[Theorem 3.7]{Costabel2012} to the case of an operator with singular
potential in three dimensions. The proof follows directly from the one in the
cited reference and is therefore omitted.
Let $\gamma\in I_d$ and let $Lg \in \mathcal{K}^{\infty}_{\gamma-2}(\Omega)$. We consider a dyadic decomposition of $\Omega$ given by
  \begin{equation*}
    \Omega_n  = 
     \left\{ x\in \Omega: 2^{-n-1}  < \|x\|_{\ell^\infty} < 2^{-n} \right\}, \quad n\geq 1
  \end{equation*}
  and denote $\Omega_n'$ as the interior of $\overline{\Omega}_{n-1} \cup \overline{\Omega}_n \cup \overline{\Omega}_{n+1}$.

\begin{lemma}
  \label{lemma:elliptic-regularity} 
    For any $n\ge 2$ and $s\ge 2$, the estimate
  \begin{equation}
    \label{eq:elliptic-regularity}
    \wseminorm{g}{s}{2}{\gamma}{\Omega_n} \leq C^s s! \left( \sum_{j=1}^{s-2} \frac{1}{j!} \wseminorm{Lg}{j}{2}{\gamma-2}{\Omega_n'} +\wnorm{{g}}{1}{2}{\gamma}{\gamma-\epsilon/2}{\Omega_n'}\right)
  \end{equation}
  holds, with $C$ independent of $s$ and $n$.
\end{lemma}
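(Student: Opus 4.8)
The plan is to reduce the analytic weighted estimate on the dyadic ring $\Omega_n$ to a single, $n$-independent analytic a priori estimate on a fixed reference configuration, by means of the dilation $x = 2^{-n}\hat{x}$. Set $\hat{g}(\hat{x}) := g(2^{-n}\hat{x})$ on the reference ring $\hat\Omega := 2^{n}\Omega_n$, together with $\hat\Omega' := 2^{n}\Omega_n'$; since $\Omega_n = \{2^{-n-1}<\|x\|_{\ell^\infty}<2^{-n}\}$, both $\hat\Omega$ and $\hat\Omega'$ are, uniformly in $n$, fixed annuli on which $\hat{r} := |\hat{x}| \simeq 1$. Using $\partial^\alpha_x = 2^{n|\alpha|}\partial^\alpha_{\hat x}$, $r = 2^{-n}\hat{r}$ and $dx = 2^{-nd}\,d\hat{x}$, a direct computation gives $\wseminorm{g}{s}{2}{\gamma}{\Omega_n} \simeq 2^{n\gamma - nd/2}\,|\hat{g}|_{H^s(\hat\Omega)}$, and, as I check below, every term on the right of \eqref{eq:elliptic-regularity} carries the \emph{same} prefactor $2^{n\gamma - nd/2}$. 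That common factor cancels across the inequality, which is what will make the final constant $C$ independent of $n$. Because $\Omega$ is conical near $\fc$, the rescaled domains and the rescaled portion of $\partial\Omega$ they meet are independent of $n$, and the analytic, covering boundary operator $B$ rescales to one with $n$-uniform analytic bounds, so the relevant estimate is the up-to-the-boundary analytic one.

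Next I would compute the rescaled equation. Since $-\Delta_x = 2^{2n}(-\Delta_{\hat x})$, and writing $\hat{V}(\hat{x}) := 2^{n(\epsilon-2)}V(2^{-n}\hat{x})$, the analytic weighted regularity $V\in\mathcal{K}^{\varpi,\infty}_{\epsilon-2}(\Omega)$ yields $n$-uniform analytic bounds $|\partial^\alpha_{\hat x}\hat{V}|\lesssim A^{|\alpha|}|\alpha|!$ on $\hat\Omega'$: the dilation factor $2^{-n|\alpha|}$ and the homogeneity factor $(2^{-n})^{(\epsilon-2)-|\alpha|}$ produced by the weight combine to the $\alpha$-independent $2^{-n(\epsilon-2)}$, which is exactly cancelled by the normalization $2^{n(\epsilon-2)}$ in the definition of $\hat{V}$. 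Setting $\hat{f}(\hat{x}) := 2^{-2n}(Lg)(2^{-n}\hat{x})$, the identity $Lg = -\Delta g + Vg$ becomes $-\Delta_{\hat x}\hat{g} + 2^{-n\epsilon}\hat{V}\hat{g} = \hat{f}$, i.e. $\hat{L}_n\hat{g} = \hat{f}$ with $\hat{L}_n := -\Delta + 2^{-n\epsilon}\hat{V}$. Because $2^{-n\epsilon}\le 1$, the zeroth-order coefficient $2^{-n\epsilon}\hat{V}$ of $\hat{L}_n$ is analytic on $\hat\Omega'$ with analytic constants bounded independently of $n$.

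On the fixed pair $\hat\Omega\Subset\hat\Omega'$ I would then invoke the interior analytic a priori estimate for second-order elliptic operators with analytic coefficients that underlies \cite[Theorem 3.7]{Costabel2012}: there exist $C,A$ depending only on the ellipticity constant and the ($n$-independent) analytic bounds of the coefficients of $\hat{L}_n$ and $B$ such that $|\hat{g}|_{H^s(\hat\Omega)} \le C A^s s!\,\big(\sum_{j=0}^{s-2}\tfrac{1}{j!}|\hat{f}|_{H^j(\hat\Omega')} + \|\hat{g}\|_{H^1(\hat\Omega')}\big)$. This is proved by induction on $s$ over a nested family of subdomains, gaining one derivative at each step by the tangential/normal difference-quotient technique; the analytic coefficient bounds enter through the Leibniz rule, whose binomial convolution $\sum_i \binom{j}{i}A^{j-i}(j-i)!\,|\hat{g}|_{H^i}$ generates precisely the $\sum_j 1/j!$ structure on the right, while the lower-order feedback from the term $2^{-n\epsilon}\hat{V}\hat{g}$ (coefficient $\le 1$) is absorbed at each induction step.

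Finally I would scale back. From the same computation, $|\hat{f}|_{H^j(\hat\Omega')}\simeq 2^{-n\gamma+nd/2}\wseminorm{Lg}{j}{2}{\gamma-2}{\Omega_n'}$ (the $2^{-2n}$ in $\hat{f}$ compensates the weight exponent $\gamma-2$) and $\|\hat{g}\|_{H^1(\hat\Omega')}\simeq 2^{-n\gamma+nd/2}\wnorm{g}{1}{2}{\gamma}{\gamma-\epsilon/2}{\Omega_n'}$; multiplying the reference estimate by $2^{n\gamma-nd/2}$ cancels all dilation factors and delivers exactly \eqref{eq:elliptic-regularity}, with $C$ (after absorbing $A$) independent of $s$ and $n$. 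I expect the main obstacle to be twofold. First, verifying that the \emph{singular} potential $V$, once dilated, produces a genuinely $n$-uniform analytic coefficient $\hat{V}$: this is the only place the hypotheses on $V$ are used, and it is where the homogeneous analytic class $\mathcal{K}^{\varpi,\infty}_{\epsilon-2}$, rather than a merely bounded potential, is indispensable. Second, ensuring the factorial bookkeeping in the induction is not corrupted by the zeroth-order term, which works precisely because the rescaled coefficient $2^{-n\epsilon}\hat{V}$ has norm $\le 1$ and thus contributes only lower-order seminorms already controlled by the induction hypothesis. The restricted range $1\le j\le s-2$ and the use of the full $\mathcal{K}^1_\gamma$ norm of $g$ in place of the $j=0$ source term are cosmetic and follow the organization of the cited reference.
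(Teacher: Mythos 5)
Your argument is correct and is essentially the proof the paper has in mind: the paper omits it, saying only that it ``follows directly from'' \cite[Theorem 3.7]{Costabel2012}, and your dyadic rescaling to a fixed reference annulus, the verification that $V\in\mathcal{K}^{\varpi,\infty}_{\epsilon-2}(\Omega)$ rescales to the uniformly analytic (indeed $2^{-n\epsilon}$-small) zeroth-order coefficient $2^{-n\epsilon}\hat V$, and the nested-domain analytic a priori estimate on the reference configuration are exactly the ingredients of that reference, with the potential term being the only genuinely new point and handled correctly. The one discrepancy is that your scaling argument naturally yields the sum over $0\le j\le s-2$ rather than $1\le j\le s-2$; since the $j=0$ term is already required by the Caccioppoli-type base case $s=2$ and its inclusion is harmless in the downstream application (Corollary \ref{cor:analytic}), the stated lower limit $j=1$ should be read as $j=0$ and your version is the correct one, so this is not a gap in your proof.
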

We now prove an embedding result that bounds $L^\infty(\Omega)$ norms in weighted spaces with norms of higher derivatives for $p=2$. This is simply the weighted version of the classical embedding of $H^s(\Omega)$ into $L^\infty(\Omega)$ for $s>d/2$, and the proof follows almost directly via dyadic decomposition.
\begin{lemma}
  \label{lemma:embedding-weighted}
  For any $\gamma - d/2\notin \mathbb{N}$ and for any $t>s+d/2$ there exists
  $C>0$ such that for any $u\in\mathcal{J}^{t}_{\gamma}(\Omega)$,
  \begin{equation}
    \label{eq:embedding-weighted}
    \| u \|_{\mathcal{J}^{s,\infty}_{\gamma-d/2}(\Omega)} \leq C \| u \|_{\mathcal{J}^{t}_{\gamma}(\Omega)}.
  \end{equation}
  \end{lemma}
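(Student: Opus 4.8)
The plan is to reduce the weighted estimate to the classical Sobolev embedding $H^t(D)\hookrightarrow W^{s,\infty}(D)$, valid for $t>s+d/2$ on a fixed reference domain $D$, by means of the dyadic decomposition $\{\Omega_n\}$ introduced before Lemma~\ref{lemma:elliptic-regularity} together with a scaling argument on each annulus. First I would split $\Omega$ into the region bounded away from $\fc$, where $r\simeq 1$ and the weighted norms reduce to ordinary Sobolev norms (so the estimate is just the classical embedding on a fixed domain), and the conical neighbourhood of $\fc$, covered by the annuli $\Omega_n$ for large $n$. On each $\Omega_n$ the distance satisfies $r\simeq 2^{-n}=:h_n$, so every weight $r^{a}$ is comparable to the constant $h_n^{a}$ and can be pulled out of the norm.

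Next, on a fixed annulus I would rescale. Writing $v_n(y)=u(h_n y)$, the map $x\mapsto y=x/h_n$ sends $\Omega_n$ onto a fixed reference annulus $D$ (the same set for all large $n$, since $\Omega$ is conical near $\fc$ with smooth $\partial U$). Under this change of variables $\partial_x^\alpha u = h_n^{-|\alpha|}\partial_y^\alpha v_n$, while $\|\cdot\|_{L^2(\Omega_n)}$ picks up the Jacobian factor $h_n^{d/2}$ and $\|\cdot\|_{L^\infty(\Omega_n)}$ is scale invariant. Hence, term by term,
\begin{equation*}
  \| r^{j-(\gamma-d/2)}\partial^\alpha u\|_{L^\infty(\Omega_n)} \simeq h_n^{\,(j-\gamma+d/2)-j}\,\|\partial_y^\alpha v_n\|_{L^\infty(D)} = h_n^{-\gamma+d/2}\|\partial_y^\alpha v_n\|_{L^\infty(D)},
\end{equation*}
and the analogous computation for the source gives $\|r^{|\beta|-\gamma}\partial^\beta u\|_{L^2(\Omega_n)}\simeq h_n^{-\gamma+d/2}\|\partial_y^\beta v_n\|_{L^2(D)}$. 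The key point is that the shift by $-d/2$ in the target weight is exactly what makes the two powers of $h_n$ coincide: the extra $h_n^{d/2}$ from the $L^2$-to-$L^\infty$ step (the Jacobian of the scaling) is absorbed by the weight. Applying $\|v_n\|_{W^{s,\infty}(D)}\le C\|v_n\|_{H^t(D)}$ on the fixed reference domain $D$ — with $C$ independent of $n$ — and undoing the scaling yields the per-annulus estimate
\begin{equation*}
  \| u \|_{\mathcal{J}^{s,\infty}_{\gamma-d/2}(\Omega_n)} \le C\,\| u \|_{\mathcal{J}^{t}_{\gamma}(\Omega_n)}, \qquad C \text{ independent of } n.
\end{equation*}

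Finally I would globalise. Since the $L^\infty$ norm over the union is the supremum of the norms over the $\Omega_n$, and a supremum is dominated by the $\ell^2$-sum, the finite-overlap of $\{\Omega_n\}$ gives
\begin{equation*}
  \| u\|_{\mathcal{J}^{s,\infty}_{\gamma-d/2}(\Omega)} \lesssim \sup_n \| u\|_{\mathcal{J}^{t}_\gamma(\Omega_n)} \le \Big(\sum_n \| u\|_{\mathcal{J}^{t}_\gamma(\Omega_n)}^2\Big)^{1/2} \simeq \| u\|_{\mathcal{J}^{t}_\gamma(\Omega)},
\end{equation*}
which, together with the classical embedding on the bulk region, completes the proof. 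The step I expect to require the most care is the bookkeeping of the two weight-floor parameters $\rho$ appearing in the inhomogeneous norms \eqref{eq:weighted-nonhom-norm}: the source norm ($p=2$) and the target norm ($p=\infty$) each cap the exponent of the low-order derivatives at its own $\rho$, and for the per-annulus powers of $h_n$ to match when a floor is active one must choose these compatibly (morally $\rho_\infty=\rho_2+d/2$), checking that both stay in the admissible ranges. This is where the hypothesis $\gamma-d/2\notin\mathbb{N}$ enters, keeping the ranges open and the floors non-critical; since only finitely many low values of $|\alpha|$ are affected, it reduces to a finite verification. The remaining ingredients — the classical embedding on the fixed reference annulus and on the bulk, and the finite-overlap of the dyadic cover — are standard.
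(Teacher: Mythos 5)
Your argument is correct in outline and shares its analytic core with the paper's proof: both reduce the estimate to the classical embedding $H^{t}\hookrightarrow W^{s,\infty}$ on a fixed reference annulus via a dyadic decomposition and a scaling in which the shift of the weight index by $-d/2$ exactly absorbs the Jacobian factor $h_n^{d/2}$ arising in the passage from $L^2$ to $L^\infty$. The structural difference lies in how the inhomogeneity of the $\mathcal{J}$-norms is handled. The paper first invokes the splitting $\mathcal{J}^t_\gamma(\Omega)=\mathcal{K}^t_\gamma(\Omega)\oplus\mathbb{Q}_{\lfloor\gamma-d/2\rfloor}(\Omega)$ (citing \cite{Costabel2010a} and \cite[Theorem 7.1.1]{Kozlov1997}), runs the scaling argument purely in the homogeneous $\mathcal{K}$-norms --- where every weight is the pure power $r^{|\alpha|-\gamma}$ and no floor parameter ever appears --- and disposes of the polynomial part by equivalence of norms on a finite-dimensional space; this decomposition is also where the hypothesis $\gamma-d/2\notin\mathbb{N}$ enters. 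You instead keep the inhomogeneous norms on each annulus and must track the floors $\rho$, which you correctly single out as the delicate point: the compatibility $\rho_\infty=\rho_2+d/2$ is the right condition, the required range $\rho_2\in(-d/2,-\gamma+s]$ is nonempty precisely because admissibility of the target space forces $\gamma-d/2<s$, and the threshold at which each floor activates ($|\alpha|<\gamma+\rho_2$) is the same on both sides, so the per-annulus exponents do match. What your route buys is independence from the direct-sum structure theorem; what it costs is the floor bookkeeping you did not fully execute, plus the (standard, but worth stating) fact that the $\mathcal{J}$-norms for different admissible choices of $\rho$ are equivalent, so that establishing the estimate for one compatible pair establishes it for all. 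The paper's route buys a cleaner scaling computation at the price of citing the decomposition result.
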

\begin{proof}
  To prove \eqref{eq:embedding-weighted} we use the fact that $\mathcal{J}^t_\gamma (\Omega)= \mathcal{K}^t_\gamma (\Omega) \oplus \mathbb{Q}_{\lfloor \gamma-d/2\rfloor}(\Omega)$ and decompose $u = v + w$  such that
  \begin{equation*}
    v\in \mathcal{K}^{s,2}_{\gamma}(\Omega) \quad \text{ and }\quad  w \in \mathbb{Q}_{\lfloor\gamma-d/2\rfloor}(\Omega).
  \end{equation*}
  Furthermore, we have
  \begin{equation*}
    \| u \|_{\mathcal{J}^t_\gamma(\Omega)} \simeq \| v \|_{\mathcal{K}^t_\gamma(\Omega)} + \| w\|_{\mathbb{Q}_{\lfloor \gamma-d/2\rfloor}(\Omega)}
  \end{equation*}
for any chosen norm $ \| \cdot \|_{\mathbb{Q}_{\lfloor \gamma-d/2\rfloor}(\Omega)}$, thanks to the equivalency of norms in finite dimensional spaces, see \cite{Costabel2010a} and \cite[Theorem 7.1.1]{Kozlov1997}. Then, by the triangle inequality and the definition of the norms in the weighted spaces,
\begin{align*}
    \| u \|_{\mathcal{J}^{s,\infty}_{\gamma-d/2}(\Omega)}  
&\leq  \| v \|_{\mathcal{J}^{s,\infty}_{\gamma-d/2}(\Omega)}  + \|w\|_{\mathcal{J}^{s,\infty}_{\gamma-d/2}(\Omega)}\\
&\leq C \| v \|_{\mathcal{K}^{s,\infty}_{\gamma-d/2}(\Omega)}  + \|w\|_{\mathcal{J}^{s,\infty}_{\gamma-d/2}(\Omega)},
\end{align*}
and we consider separately the two terms at the right hand side.
Consider the annuli
  \begin{equation*}
    \Gamma_j = \left\{ x\in \Omega : 2^{-j} < \|x\|_{\ell^\infty} < 2^{-j+1} \right\},\,j \in \mathbb{N}
  \end{equation*}
  and let $\widehat{\Gamma}=\Gamma_0$. Then, scaling and using a Sobolev inequality, 
  \begin{align*}
    \| v \|_{\mathcal{K}^{s,\infty}_{\gamma-d/2} (\Gamma_j)}  &\simeq 2^{j(\gamma-d/2)} \| \hat{v}\|_{W^{s,\infty}(\widehat{\Gamma})}\\
                                                              &\lesssim 2^{j(\gamma-d/2)} \| \hat{v}\|_{H^{t}(\widehat{\Gamma})}\\
                                                              & \lesssim 2^{j(\gamma-d/2)} \| \hat{v} \|_{\mathcal{K}^{t}_{\gamma}(\widehat{\Gamma})} \\
                                                              & \simeq \|v\|_{\mathcal{K}^{t}_{\gamma}(\Gamma_j)}\\
                                                              & \lesssim \| u \|_{\mathcal{J}^{t}_\gamma (\Omega)},
  \end{align*}
  where the quantities with a hat are rescaled on $\widehat{\Gamma}$. Therefore 
  \begin{equation*}
    \| v \|_{\mathcal{K}^{s,\infty}_{\gamma-d/2} (\Omega)}= \sup_j \|v\|_{\mathcal{K}^{s,\infty}_{\gamma-d/2} (\Gamma_j)} \leq C   \| u \|_{\mathcal{J}^{t}_\gamma (\Omega)}   .
  \end{equation*}
  Since $w$ lies in the finite dimensional space of polynomials of degree
  $\lfloor \gamma -d/2 \rfloor$, we can conclude with
  \eqref{eq:embedding-weighted}, where the constant $C$ can depend on the domain
  $\Omega$, on the dimension $d$ and on $\gamma$, but does not depend on $s$ and
  $t$.
\end{proof}
The weighted analytic estimates then follow for $p=\infty$. Lemma
\ref{lemma:embedding-weighted} directly implies the following statement.
\begin{corollary}
  Let $\gamma-d/2\notin \mathbb{N}$. If $u\in \mathcal{J}^{\varpi,2}_{\gamma}(\Omega)$, then $u\in \mathcal{J}^{\varpi,\infty}_{\gamma-d/2}(\Omega)$.
\end{corollary}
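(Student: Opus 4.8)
The plan is to feed the analytic-type hypothesis directly into the embedding of Lemma~\ref{lemma:embedding-weighted}, so that the only real work is to control the shift in the differentiability index and the resulting factorial. Since $u\in\mathcal{J}^{\varpi,2}_\gamma(\Omega)$, by definition there are constants $C_0,A>0$ with
\begin{equation*}
  \|u\|_{\mathcal{J}^{k,2}_\gamma(\Omega)}\le C_0A^kk!\qquad\text{for every }k\in\mathbb{N}.
\end{equation*}
In particular every finite-order norm of $u$ is finite, so $u\in\mathcal{J}^{t}_\gamma(\Omega)$ for each integer $t$ and the hypotheses of Lemma~\ref{lemma:embedding-weighted} are satisfied (the requirement $\gamma-d/2\notin\mathbb{N}$ is exactly the assumption of the corollary).

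First I would fix, for each integer $s\ge 0$, the integer $t=s+2$. Because $d\in\{2,3\}$ we have $t=s+2>s+d/2$, so Lemma~\ref{lemma:embedding-weighted} applies with this choice and provides a constant $C$, \emph{independent of $s$ and $t$} (as stressed at the end of its proof), such that
\begin{equation*}
  \|u\|_{\mathcal{J}^{s,\infty}_{\gamma-d/2}(\Omega)}\le C\,\|u\|_{\mathcal{J}^{s+2}_\gamma(\Omega)}.
\end{equation*}
Inserting the analytic bound at order $k=s+2$ then gives
\begin{equation*}
  \|u\|_{\mathcal{J}^{s,\infty}_{\gamma-d/2}(\Omega)}\le C\,C_0\,A^{s+2}\,(s+2)!
  =C\,C_0\,A^2\,(s+1)(s+2)\,A^s\,s!.
\end{equation*}

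The last step is the only point needing any care: the polynomial prefactor $(s+1)(s+2)$ must be absorbed into a geometric factor so as to recover the form $C'(A')^s s!$ required for membership in $\mathcal{J}^{\varpi,\infty}_{\gamma-d/2}(\Omega)$. Since $(s+1)(s+2)$ grows only polynomially, it is dominated by any geometric sequence of base greater than $1$; concretely, setting $A'=2A$ there is a finite $\kappa$ with $A^2(s+1)(s+2)\le\kappa\,2^s$ for all $s\ge 0$, whence
\begin{equation*}
  \|u\|_{\mathcal{J}^{s,\infty}_{\gamma-d/2}(\Omega)}\le (C\,C_0\,\kappa)\,(2A)^s\,s!=:C'(A')^s s!.
\end{equation*}
This is precisely the defining estimate of $\mathcal{J}^{\varpi,\infty}_{\gamma-d/2}(\Omega)$, and since each norm on the left is finite we also obtain $u\in\mathcal{J}^{\infty,\infty}_{\gamma-d/2}(\Omega)$, which completes the argument. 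I do not foresee a genuine obstacle: all the analytic content is already packaged in Lemma~\ref{lemma:embedding-weighted}, and what remains is the standard observation that a fixed shift of the index together with multiplication by a polynomial preserves the analytic (Gevrey-type) class $C A^k k!$.
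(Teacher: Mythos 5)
Your proof is correct and follows the same route the paper intends: the paper gives no explicit proof, stating only that Lemma~\ref{lemma:embedding-weighted} ``directly implies'' the corollary, and your argument simply makes that implication explicit (choosing $t=s+2>s+d/2$, invoking the $s$- and $t$-independence of the embedding constant, and absorbing the polynomial factor $(s+1)(s+2)$ into the geometric base). Nothing further is needed.
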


It is now evident that, using Lemmas \ref{lemma:solution-regularity} and \ref{lemma:elliptic-regularity}, we can prove that when the right hand side and the potential of \eqref{eq:mainproblem} obey analytic growth estimates on the weighted norms of the derivatives, the solution $u$ is in the same regularity class. This is the content of the following corollary.
\begin{corollary}
  \label{cor:analytic}
  If $u$ is solution to \eqref{eq:mainproblem} with $V:\mathbb{R}^d\to\mathbb{R}_+$ such that
$V\in\mathcal{K}^{\varpi,\infty}_{\epsilon-2}(\Omega)$ for some $0<\epsilon\leq 1$,
then $u\in\mathcal{J}^{\varpi,2}_{\gamma}(\Omega)$ for any $\gamma < d/2+\epsilon$.
\end{corollary}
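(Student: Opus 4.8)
The plan is to convert the eigenvalue relation \eqref{eq:mainproblem} into an \emph{inhomogeneous} source problem whose right-hand side is a genuinely analytic (externally bounded) function, so that the per-annulus analytic estimate \eqref{eq:elliptic-regularity} of Lemma \ref{lemma:elliptic-regularity} can be summed directly, with no self-referential induction. Three ingredients are needed: a finite-regularity bootstrap from Lemma \ref{lemma:solution-regularity}; the splitting $u = u(\fc) + w$ into a constant plus a function in the homogeneous scale; and the observation that, once $\lambda$ is absorbed into the potential, the data seen by $w$ is a constant multiple of the analytic weighted function $V$.

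First I would establish finite regularity. Since $(L,B)$ is an isomorphism between the spaces \eqref{eq:spaces} for every $k\ge 2$ (Lemma \ref{lemma:solution-regularity}) and $u$ solves $Lu=\lambda u$, a bootstrap in which the right-hand side $\lambda u$ gains two derivatives at each application yields $u\in\mathcal{J}^k_\gamma(\Omega)$ for all finite $k$, for every $\gamma$ with $\gamma-d/2\in I_d$. In particular $u\in\mathcal{J}^2_\gamma(\Omega)$, so by \eqref{eq:equivalence-hom-nonhom} the value $u(\fc)$ is well defined and $w:=u-u(\fc)\in\mathcal{K}^k_\gamma(\Omega)$ for all finite $k$; moreover $w\in\mathcal{K}^1_\gamma(\Omega)$ precisely because $\gamma<d/2+\epsilon$. (When $\gamma-d/2<0$ the two scales coincide and this step is vacuous, with $w=u$.) Writing $\tL=-\Delta+(V-\lambda)$ and using $-\Delta u(\fc)=0$, the relation $Lu=\lambda u$ becomes
\[
  \tL w = (\lambda-V)\,u(\fc) =: f .
\]
The point of subtracting $\lambda u$ and peeling off the constant is that $f$ no longer contains $w$: it is a fixed function, and since $\lambda$ and $u(\fc)$ are constants and $V\in\mathcal{K}^{\varpi,\infty}_{\epsilon-2}(\Omega)\hookrightarrow\mathcal{K}^{\varpi,2}_{\gamma-2}(\Omega)$ (the embedding holding exactly because $\gamma<d/2+\epsilon$), $f$ is weighted analytic: there are $C_f,B$ with $\left|f\right|_{\mathcal{K}^j_{\gamma-2}(\Omega)}\le C_f B^j j!$ for all $j$. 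Since $V-\lambda$ still lies in $\mathcal{K}^{\varpi,\infty}_{\epsilon-2}(\Omega)$, Lemma \ref{lemma:elliptic-regularity} applies to $\tL$.

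I would then apply \eqref{eq:elliptic-regularity} to $w$ on each annulus $\Omega_n$, take the $\ell^2$-sum over $n$ (the enlarged annuli $\Omega_n'$ overlap with bounded multiplicity), and insert the analytic bound on $f$:
\[
  \left|w\right|_{\mathcal{K}^s_\gamma(\Omega)} \le C^s s!\Big(\sum_{j=1}^{s-2}\frac{1}{j!}\left|f\right|_{\mathcal{K}^j_{\gamma-2}(\Omega)} + \|w\|_{\mathcal{K}^1_\gamma(\Omega)}\Big) \le C^s s!\Big(C_f\sum_{j=1}^{s-2}B^j + \|w\|_{\mathcal{K}^1_\gamma(\Omega)}\Big).
\]
The geometric sum is bounded by $C_* A^s$ with $A\simeq\max(1,B)$, hence $\left|w\right|_{\mathcal{K}^s_\gamma(\Omega)}\le C_*(CA)^s s!$, i.e. $w\in\mathcal{K}^{\varpi,2}_\gamma(\Omega)$. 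The constant $u(\fc)\in\mathbb{Q}_0$ is trivially weighted analytic, so the splitting gives $u\in\mathcal{J}^{\varpi,2}_\gamma(\Omega)$ for $\gamma-d/2\in I_d$; the statement for every $\gamma<d/2+\epsilon$ follows from the continuous inclusion of the weighted analytic classes under decreasing weight index.

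The main obstacle is conceptual rather than computational. If one naively treats $\lambda u$ as analytic data in \eqref{eq:elliptic-regularity}, its analytic base equals the very rate one is trying to bound for $u$, and the prefactor $C^s s!$ then prevents the induction from closing for any fixed rate. The device that removes this circularity is the combination of absorbing $\lambda$ into the operator and isolating the nonzero nodal value $u(\fc)$: this replaces the self-referential right-hand side by $f=(\lambda-V)u(\fc)$, whose analytic base $B$ is supplied once and for all by the analyticity of $V$, after which the summation against $C^s s!$ is immediate. The secondary technical points to verify carefully are the embedding $\mathcal{K}^{\varpi,\infty}_{\epsilon-2}\hookrightarrow\mathcal{K}^{\varpi,2}_{\gamma-2}$ (which is where the hypothesis $\gamma<d/2+\epsilon$ is used) and the finiteness of $\|w\|_{\mathcal{K}^1_\gamma(\Omega)}$, both of which hinge on the same weight threshold.
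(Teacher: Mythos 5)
Your proposal is correct and follows essentially the same route as the paper's proof: both decompose $u$ into the constant $u(\fc)$ plus $w=u-u(\fc)\in\mathcal{K}^s_\gamma(\Omega)$, observe that the data seen by $w$ under the shifted operator is $(\lambda-V)u(\fc)\in\mathcal{K}^{\varpi}_{\gamma-2}(\Omega)$ thanks to the weighted analyticity of $V$ and the boundedness of $u(\fc)$ (Lemma \ref{lemma:embedding-weighted}), and then sum the per-annulus estimate \eqref{eq:elliptic-regularity} over the dyadic decomposition. Your additional remarks on where the threshold $\gamma<d/2+\epsilon$ enters and on the circularity avoided by peeling off $u(\fc)$ are consistent with, and slightly more explicit than, the argument in the paper.
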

\begin{proof}
 By the initial regularity of $u\in H^1(\Omega)$ and Lemma
 \ref{lemma:solution-regularity} we have that
 $u\in\mathcal{J}^2_{\gamma}(\Omega)$ for $\gamma\in I_d$. We can decompose
 $u=(u-u(\fc)) + u(\fc)$ and apply \eqref{eq:elliptic-regularity} to $g =
 u-u(\fc)$. Then, 
 $V\in\mathcal{K}^{\varpi,\infty}_{-2+\epsilon}(\Omega)$, and  $|u(\fc)| \leq C$
by Lemma \ref{lemma:embedding-weighted}, hence $(L-\lambda)g = \lambda u(\fc)  -
Vu(\fc) \in\mathcal{K}^{\varpi}_{\gamma-2}(\Omega)$. In addition $\| u - u(\fc)\|_{\mathcal{K}^1_\gamma(\Omega)}\lesssim \| u \|_{\mathcal{J}^{1}_\gamma(\Omega)} + |u(\fc)|$. Summing the left and right hand sides of \eqref{eq:elliptic-regularity} over all $\Omega_k$ gives the existence of $C,A\in \mathbb{R}^+$ such that
  \begin{equation*}
    | u |_{\mathcal{K}^s_\gamma(\Omega)} \leq C A^s s!,
  \end{equation*}
  for all $s\geq 2$ and $\gamma\in I_d$, thus $u\in\mathcal{J}^{\varpi}_\gamma(\Omega)$.
\end{proof}

\section{Numerical approximation}
\label{sec:numerical-appx}
In this section, we consider the approximation of the linear elliptic eigenvalue
problem \eqref{eq:mainproblem} obtained through an isotropically graded discontinuous Galerkin $hp{}$
method.

The contents of
the section are largely based on \cite{Antonietti2006}, where the convergence of
the discontinuous Galerkin method is proven for linear elliptic eigenvalue problems.
The result obtained in that paper is an extension to discontinuous Galerkin
methods of the theory developed almost three decades earlier, see
\cite{Descloux1978a, Descloux1978b}. A thorough presentation of the
approximation of eigenvalue problems is also given in~\cite[Chapter II]{Handbook2}.

The only differences
with the analysis in \cite{Antonietti2006} are due to the presence of a potential and to the specificity of
approximation in isotropically refined $\hp{}$ finite element spaces.

In the following, we introduce the analysis developed in the aforementioned papers,
specializing it to problems with singular points and interior penalty
discontinuous Galerkin methods.

We also introduce the interior penalty discontinuous Galerkin methods that
will be taken into consideration. We will be dealing with a symmetric operator
and a coercive linear form, thus the spectrum  is composed of real isolated
eigenvalues of ascent one. The analysis can still be partially extended
to non-symmetric problems, but it has to be taken into account that the operators
are not self-adjoint. We conclude the section by introducing the ``solution
operators'' $T$, for the continuous problem, and $\Td$, for the discrete approximation.
$T$ and $\Td$ are continuous and invertible operators, with the same eigenspaces
as the ones of the original problems and with reciprocal eigenvalues. The
analysis will center around those operators, and the final results can easily be
applied back to the original problems. Finally, we will need a way to measure a
``distance'' between eigenspaces: this is the role of $\delta(\cdot, \cdot)$ and
$\hd(\cdot, \cdot)$ defined
in \eqref{eq:delta-def}.

The interest of the analysis of the approximation of an eigenvalue problem lies
not only in the convergence of the numerical eigenpairs to the exact ones, but
also in the non pollution and completeness of eigenfunctions and eigenvalues. 
Basically, a good approximation of an eigenproblem should not introduce any
spurious numerical eigenvalue or eigenvector (non-pollution) and should
approximate all eigenpairs (completeness). In Theorem \ref{th:non-poll-spectrum}
we show that the spectrum is not polluted, while in Theorem \ref{th:deltahat}
the completeness of the approximation is shown (more precisely, Theorem
\ref{th:deltahat} gives both completeness and convergence for finite dimensional
eigenspaces, while simple completeness is a consequence of Lemma \ref{lemma:dspace2}).
Note that, in practice, some techniques may still introduce spurious eigenvalues
in the approximation: consider for example the ``strong'' imposition of boundary
conditions in a numerical code, where the matrix resulting from the
approximation of the operator is modified in order to set the degrees of freedom
at the boundary, see, e.g., the documentation of \cite{dealII85}.
This is out of the scope of the present analysis; furthermore,
the spurious eigenvalues can often be easily identified.

Finally, in Section \ref{sec:lin-convergence}, the focus is on the rate of
convergence of the numerical eigenpairs. We consider finite dimensional exact
eigenspaces and we introduce a projector from the exact to the numerical
eigenspace, thus obtaining a somewhat algebraic problem, at least in the
relationship between the eigenvalues and the (projected) operators $\Th$ and
$\Thd$ (the latter can be seen as tensors in the finite dimensional eigenspace).
We obtain the expected quasi optimal estimates on the difference between exact and numerical
eigenfunctions. The eigenvalue error, additionally, can be shown to converge
with a higher rate of convergence --- quadratically with respect to the
eigenfunctions --- if the method is adjoint consistent (symmetric, in our case).

We now introduce the discontinuous Galerkin interior penalty method.

\subsection{Interior penalty method}
Let $\mathcal{T}$ be a mesh isotropically and geometrically graded around the
points in $\mathfrak{C}$. We assume that the mesh is shape- and contact-regular
and we indicate by $\Omega_j$, $j=1, \dots, \ell$, the set of elements and edges
at the same level of refinement.We introduce on this mesh the \hp{} space with refinement ratio $\sigma$ and linear polynomial slope $\slope$, i.e., for an element $K\in \mathcal{T}$ such that $K\in\Omega_j$,
\begin{equation*}
  h_K \simeq h_j = \sigma^{j} \text{ and } p_K \simeq p_j = p_0+\slope(\ell-j),
\end{equation*}
where $h_K$ is the diameter of the element $K$ and $p_K$ is the polynomial order
whose role will be specified in \eqref{eq:lin-discrete-space}. We suppose that for any $K\in\mathcal{T}$ there exists an affine transformation $\Phi:K\to\hK$ to the $d$-dimensional cube $\hK$ such that $\Phi(K) = \hK$, and introduce the discrete space 
\begin{equation}
  \label{eq:lin-discrete-space}
  \de{X} = \left\{ \de{v} \in L^2(\Omega):(v_{|_K}\circ\Phi^{-1})\in \mathbb{Q}_{p_K}(\hK)\; \forall K\in\mathcal{T}\right\},
\end{equation}
where $\mathbb{Q}_p$ is the space of polynomials of maximal degree $p$ in any variable.
Let then $\mathcal{E}$ be the set of the edges (for $d=2$) or faces ($d=3$) of the elements in $\mathcal{T}$ and
\begin{align*}
  \dhe &= \min_{K\in \mathcal{T}: e\cap\partial K \neq \myempty}h_K \\
  \dpe &= \max_{K\in \mathcal{T}: e\cap\partial K \neq \myempty}p_K .
\end{align*}
Note that edges and faces are open $d-1$ dimensional sets.
On an edge/face between two elements $K_\sharp$ and $K_\flat$, i.e., on $e\subset\partial{K}_\sharp\cap \partial{K}_\flat$, the average $\avg{\cdot}$ and jump $\jump{\cdot}$ operators for a function $w\in X(\delta)$  are defined by
\begin{equation*}
  \avg{w} = \frac{1}{2}\left( w_{|_{K_\sharp}}+w_{|_{K_\flat}}\right), \qquad \jump{w} =   w_{|_{K_\sharp}}\mathbf{n}_\sharp+w_{|_{K_\flat}}\mathbf{n}_\flat,
\end{equation*}
where $\mathbf{n}_\sharp$ (resp. $\mathbf{n}_\flat$) is the outward normal to the element $K_\sharp$ (resp. $K_\flat$). In the following, for an $S\subset \Omega$, we denote by $(\cdot, \cdot)_S$ the $L^2(S)$ scalar product and by $\| \cdot \|_S$ the $L^2(S)$ norm. 

We indicate by $\ad(\cdot, \cdot):\Xd\times\Xd\to\mathbb{R}$ the interior penalty bilinear form, given by
\begin{equation}
  \begin{aligned}
  \label{eq:bilin-discont}
  a_\delta (u_\delta,v_\delta) = (\nabla u_\delta, \nabla v_\delta)_{\mathcal{T}} &- (\avg{\nabla u_\delta},\jump{v_\delta})_{\mathcal{E}_I}- \theta(\avg{\nabla v_\delta},\jump{u_\delta})_{\mathcal{E}} \\ & + \sum_{e \in \mathcal{E}}\alpha_e \frac{\dpe^2}{\dhe} (\jump{u_\delta}, \jump{v_\delta})_{e} + \int_\Omega V u_\delta v_\delta.
\end{aligned} 
\end{equation}
Here, $\mathcal{E}_I$ is the set of internal edges such that for all
$e\in\mathcal{E}_I$, $e\cap \partial \Omega=\emptyset$, and we have written
\begin{equation*}
  (\cdot, \cdot)_{\mathcal{T}} = \sum_{K\in \mathcal{T}}(\cdot, \cdot)_K\qquad
  (\cdot, \cdot)_{\mathcal{E}} = \sum_{e\in \mathcal{E}}(\cdot, \cdot)_e.
\end{equation*}
The discrete eigenvalue problem then reads: find $(\lambda_\delta, u_\delta)\in
\mathbb{C}\times \Xd$
\begin{equation}
    \label{eq:discrete-prob}
    \ad(\ud, \vd)  = \ld (\ud,\vd) \text{ for all }\vd\in\Xd.
  \end{equation}
Choosing $\theta = 1$ in \eqref{eq:bilin-discont}
gives the symmetric interior penalty (SIP) method, while $\theta=-1$ gives the
non-symmetric interior penalty (NIP) method, and $\theta=0$ gives the incomplete
interior penalty method (IIP). We remark that the choice $\theta=1$ is the only
one that assures the symmetry of the method; the SIP method is \emph{adjoint
  consistent}. 

We write $X=H^1(\Omega)$, $X(\delta) = X+\Xd$ and introduce the mesh dependent norms
\begin{align*}
  \dgnorm{v}^2
 & = \sum_{K\in \mathcal{T}} \|v \|^2_{H^1(K)} + \sum_{e\in\mathcal{E}}\dpe^2\dhe^{-1}\| \jump{v}\|^2_{L^2({e})}
\intertext{and}
    \fullnorm{v}^2
  &= \dgnorm{v}^2 + \sum_{e\in\mathcal{E}} \dhe \dpe^{-2} \| \nabla v \|^2_{L^2(e)}.
\end{align*}
Note that $\dgnorm{\cdot}$ is defined on $\XXd$, while $\fullnorm{\cdot}$ is
defined only on the broken space
\[\XXd \cap H^{d/2}(\mathcal{T})  = \left\{ v\in \XXd : v\in H^{d/2}(K) \text{
      for all }K \in \mathcal{T}\right\},\]
due to the presence of the boundary gradient term.
We introduce the continuous solution operator
\begin{equation}
  \label{eq:Tdef}
  T : L^2(\Omega) \to X
\end{equation}
such that
\begin{equation*}
 a(Tu, v) = (u,v),  \text{for all } v \in X
\end{equation*}
and its discrete counterpart, given by
\begin{equation}
  \label{eq:Tddef}
  \Td : L^2(\Omega) \to \Xd
\end{equation}
such that
\begin{equation*}
 \ad(\Td u, v) = (u,v),  \text{for all } \vd \in \Xd.
\end{equation*}
The analysis of the relation between the spectra associated to the operator $L$
in \eqref{eq:mainproblem} and to the discrete bilinear form $\ad$ can be
reconducted to the analysis of the spectra of $T$ and $\Td$. Since the bilinear form associated to $L$ is continuous, coercive on $X$, and symmetric,
\begin{enumerate}[(i)]
\item all the eigenvalues are real and strictly positive,
\item the set of the eigenvalues of $L$ is a countably infinite sequence diverging to $\infty$,
\item all the eigenspaces are finite dimensional,
\item eigenfunctions associated with different eigenvalues are L2-orthogonal,
\item the eigenfunctions are complete $L^2(\Omega)$ and in $X$.
\end{enumerate}
    
 In the following, the
spectrum of $T$ will be denoted by $\sigma(T)$ and its resolvent set by
$\rho(T)$.
Similarly, $\sigma(\Td)$ and $\rho(\Td)$ will be respectively the
spectrum and resolvent set of $\Td$. Let then 
\begin{equation*}
  R_z(T) = (z-T)^{-1}
\end{equation*}
be the resolvent operator associated with $T$, and
\begin{equation*}
  R_z(\Td) = (z-\Td)^{-1}
\end{equation*}
be the resolvent operator associated with $\Td$, both defined for
$z\in\mathbb{C}$. Finally, we introduce a measure of the gap between subspaces
of $\XXd$: let $Y$ and $Z$ be close subspaces of $\XXd$; then for an $x\in X$
\begin{equation}
  \label{eq:delta-def}
  \begin{aligned}
  &\delta(x, Y)  = \inf_{y\in Y} \dgnorm{x - y}, \qquad \delta(Y,Z) = \sup_{y\in Y:\dgnorm{y}=1} \delta(y, Z)\\
  &\hd(Y,Z) = \max(\delta(Y,Z), \delta(Z,Y))
  \end{aligned}
\end{equation}

\subsection{Non pollution and completeness of the discrete spectrum and eigenspaces}
\label{sec:nonpoll-complet}
\subsubsection{Non pollution of the spectrum}
In this section we detail the technique used in \cite{Antonietti2006} to prove
the non-pollution of the discrete spectrum.
Note that, thus far, $R_z(\Td)$ has only been defined formally. We will now show
its existence and continuity, together with the existence and continuity of its inverse.
This will imply the non pollution of the discrete spectrum and guarantee that,
for a sufficient number of degrees of freedom, the discrete spectrum lies in the
vicinity of the continuous one.

We start by introducing a lemma, whose proof
we postpone to the end of the section.
\begin{lemma}
  \label{lemma:nonpoll1}
  Let $z\in \rho(T)$ such that $z\neq 0$ and $u\in X(\delta)$. Then, there
  exists $C>0$ such that
  \begin{equation*}
    \dgnorm{(z-T) u} \geq C \dgnorm{u}
  \end{equation*}
  where $C$ depends on $L$, on $\Omega$, and on $|z|$.
\end{lemma}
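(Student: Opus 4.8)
The plan is to bound $\dgnorm{u}$ from above by $\dgnorm{(z-T)u}$ with a constant that does not depend on the discretization $\delta$, exploiting the fact that, although $u\in X(\delta)$ may be discontinuous, $Tu$ always lands in the continuous space $X=H^1(\Omega)$. Thus the only genuinely mesh-dependent (jump) contribution to the DG norm enters through $u$ itself. Setting $w=(z-T)u\in L^2(\Omega)$ and using $zu=w+Tu$ with the triangle inequality for $\dgnorm{\cdot}$, I would first write
\[
|z|\,\dgnorm{u}\;\le\;\dgnorm{w}+\dgnorm{Tu}.
\]
Everything then reduces to estimating $\dgnorm{Tu}$ by $\dgnorm{w}$ uniformly in $\delta$.

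For this, note that $Tu\in X=H^1(\Omega)$ is globally continuous, so all its interelement jumps vanish and $\dgnorm{Tu}\lesssim\|Tu\|_{H^1(\Omega)}$, with no penalty weight surviving. Since $z\in\rho(T)$, the operator $z-T$ is a bijection of $L^2(\Omega)$ with bounded inverse $R_z(T)$, whence $u=R_z(T)w$; and because $R_z(T)$ is a function of $T$ it commutes with $T$, giving the identity
\[
Tu\;=\;T\,R_z(T)\,w .
\]
Using the boundedness $T:L^2(\Omega)\to H^1(\Omega)$ (a consequence of the coercivity of $a$ on $X$, so that $\|Tg\|_{H^1(\Omega)}\lesssim\|g\|_{L^2(\Omega)}$) together with $\|R_z(T)\|_{L^2\to L^2}<\infty$, I would estimate
\[
\dgnorm{Tu}\;\lesssim\;\|T R_z(T)w\|_{H^1(\Omega)}\;\lesssim\;\|R_z(T)w\|_{L^2(\Omega)}\;\le\;\|R_z(T)\|\,\|w\|_{L^2(\Omega)}\;\le\;\|R_z(T)\|\,\dgnorm{w},
\]
where the final step uses $\|w\|_{L^2(\Omega)}\le\dgnorm{w}$, immediate from the definition of $\dgnorm{\cdot}$.

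Combining the two displays gives $|z|\,\dgnorm{u}\le\bigl(1+C\|R_z(T)\|\bigr)\dgnorm{w}$, that is,
\[
\dgnorm{(z-T)u}\;\ge\;\frac{|z|}{1+C\|R_z(T)\|}\,\dgnorm{u},
\]
which is the claim, with the constant depending on $\Omega$ and on the coercivity/continuity constants of $a$ (hence on $L$) through the bound for $T$, and on $z$ through $\|R_z(T)\|$. I expect the only real obstacle to be \emph{uniformity in} $\delta$: the naive estimate $\dgnorm{Tu}\lesssim\|u\|_{L^2(\Omega)}\le\dgnorm{u}$ would only close the argument for $|z|$ larger than $\|T\|$, so it is essential to route the bound through the continuous resolvent identity $Tu=TR_z(T)w$, which is entirely independent of the mesh. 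The one point to check with care is that the jumps of $Tu$ really do vanish --- so that no factor $\dpe^2/\dhe$ survives --- which is exactly where the continuity of elements of $X$ and the boundary condition are used.
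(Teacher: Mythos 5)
Your proposal is correct and follows essentially the same route as the paper: both start from the triangle inequality $|z|\,\dgnorm{u}\le \dgnorm{(z-T)u}+\dgnorm{zTu}$ and then control the second term by $\|(z-T)u\|_{L^2(\Omega)}\le\dgnorm{(z-T)u}$, using that $zTu$ lies in the continuous space $X$ (so its DG norm is just its $H^1$ norm) and that the shifted operator is invertible because $z\in\rho(T)$. Your identity $Tu=T R_z(T)(z-T)u$ is simply the resolvent form of the paper's observation that $v=zTu$ solves $Lv-\tfrac{1}{z}v=(z-T)u$ with $L-\tfrac{1}{z}$ invertible, so the two arguments coincide.
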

By the triangle inequality, then,
\begin{equation}
  \label{eq:nonpoll1}
  \dgnorm{ (z-\Td) u} \geq \dgnorm{ (z-T)u} - \dgnorm{(T-\Td)u}.
\end{equation}
Now, the second term at the right hand side is the classical error of the
method; by the coercivity and continuity of the discrete bilinear form, Lemma
\ref{lemma:solution-regularity} and the approximation properties of the \hp{} space, we have
that
\begin{equation*}
  \dgnorm{(T-\Td)u} \to 0 \text{ as }N\to\infty 
\end{equation*}
where $N$ is the dimension of $\Xd$.
Using Lemma \ref{lemma:nonpoll1} and the above estimate in \eqref{eq:nonpoll1},
we obtain that, for a sufficient number of degrees of freedom,
\begin{equation}
  \label{eq:nonpoll2}
  \dgnorm{ (z-\Td) u} \geq C \dgnorm{u}
\end{equation}
for $0\neq z \in \rho(T)$. For a fixed $z$ and for a sufficient number of
degrees of freedom (depending on $z$), thus, $z-\Td$ is invertible and
$R_z(\Td)$ is well defined. Furthermore, Lemma \ref{lemma:nonpoll1} implies that
$R_z(T)$ is well defined and bounded as an operator on the spaces $\XXd\to\XXd$.
We have therefore shown that $R_z(\Td)$ is bounded as a linear operator from
$\XXd$ to $\XXd$, and that the spectrum is not polluted; in the following we summarize this results.
Denoting by $\| \cdot\|_{\mathcal{L}(V,W)}$ the classical operator norm
\begin{equation*}
  \| F \|_{\mathcal{L}(V,W)} = \sup_{v\in V: \|v\|_{V}=1} \|Fv\|_W,
\end{equation*}
from \eqref{eq:nonpoll2} we conclude that
\begin{lemma}
  \label{lemma:nonpoll2}
  Let $A\subset \rho(T)$ be a closed set. Then, for all $z\in A$, there exists a
  constant $C$ such that
  \begin{equation*}
    \| R_z(\Td)\|_{\mathcal{L}(\XXd, \XXd)} \leq C.
  \end{equation*}
\end{lemma}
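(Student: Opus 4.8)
The plan is to deduce the uniform boundedness of $R_z(\Td)$ on the closed set $A\subset\rho(T)$ from the pointwise estimate \eqref{eq:nonpoll2}, which has already been established for each fixed $z\neq 0$ in $\rho(T)$. The two obstacles to converting the pointwise statement into the uniform one are: (i) the constant $C$ in \eqref{eq:nonpoll2} depends on $z$ (through $|z|$, as recorded in Lemma \ref{lemma:nonpoll1}); and (ii) the number of degrees of freedom $N$ required for \eqref{eq:nonpoll2} to hold also depends on $z$. A compactness argument on $A$ should handle both, provided $A$ is bounded; if $A$ is merely closed, I would first truncate it to a compact piece and treat the unbounded tail separately.

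First I would trace the $z$-dependence of the constant in Lemma \ref{lemma:nonpoll1}. The proof of that lemma yields $C = C(L,\Omega,|z|)$, and one expects $C$ to degenerate only as $|z|\to 0$ (excluded since $0\notin A$ is guaranteed by $z\neq 0$ together with $A\subset\rho(T)$) or possibly as $|z|\to\infty$. Inspecting the structure $\dgnorm{(z-T)u}$, the relevant quantity behaves like $\dist(z,\sigma(T))$ modulated by $|z|$, so on any set bounded away from $\sigma(T)$ the lower bound is uniform. Since $A\subset\rho(T)$ is closed, $\dist(A,\sigma(T))>0$, and the coercivity-based estimate of Lemma \ref{lemma:nonpoll1} can be taken uniform in $z\in A$ as long as $|z|$ stays bounded.

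Next I would control the approximation error term $\dgnorm{(T-\Td)u}$ uniformly. This term is governed by the best-approximation error of $Tu$ in the \hp{} space, which by Lemma \ref{lemma:solution-regularity} and the exponential approximation properties of the isotropically graded space tends to zero at a rate independent of $z$ once $z$ ranges over a bounded set; the only $z$-dependence enters through $\dgnorm{Tu}\le \dgnorm{R_z(T)}\,\dgnorm{u}$ via the resolvent bound, which is again uniform on the compact piece of $A$. Combining this with the uniform version of Lemma \ref{lemma:nonpoll1} in \eqref{eq:nonpoll1} gives a single threshold $N_0$ and a single constant $C$ such that \eqref{eq:nonpoll2} holds for all $z$ in the bounded part of $A$ and all $N\ge N_0$, which inverts to the desired bound $\| R_z(\Td)\|_{\mathcal{L}(\XXd,\XXd)}\le C$.

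\emph{The hard part} will be ruling out degeneration of the constant at the two extremes of $A$ and making the dependence on $N$ genuinely uniform rather than merely pointwise. The clean way is to assume (as is standard in this theory, and as the phrase ``closed set'' here most naturally means in the spectral context of \cite{Antonietti2006}) that $A$ is a compact subset of $\rho(T)$ avoiding the origin; then a finite subcover argument applied to the pointwise statement \eqref{eq:nonpoll2} immediately produces a common $N_0$ and, by taking the worst of finitely many constants, a common $C$. For an unbounded $A$, one would additionally invoke the large-$|z|$ behaviour of the resolvent, where $z-\Td$ is dominated by $z$ and invertibility is automatic with $\|R_z(\Td)\|\lesssim |z|^{-1}$; I would verify that this tail estimate meshes with the compact-part bound to yield a single uniform constant over all of $A$.
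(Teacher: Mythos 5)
Your proposal follows essentially the same route as the paper: the triangle inequality \eqref{eq:nonpoll1}, the lower bound of Lemma \ref{lemma:nonpoll1}, and the convergence $\dgnorm{(T-\Td)u}\to 0$ combine to give \eqref{eq:nonpoll2}, which inverts to the resolvent bound. Your additional care about uniformity in $z$ (the compactness/finite-subcover argument for the constant and for the threshold number of degrees of freedom) addresses a point the paper silently glosses over, since its statement quantifies $C$ after $z$ and its argument is purely pointwise in $z$; this is a welcome refinement rather than a different approach.
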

The non-pollution of the spectrum follows directly, taking the complementary of
the set $A$ above.
\begin{theorem}
  \label{th:non-poll-spectrum}
  Let $B\supset\sigma(T)$ be an open set. Then, for a sufficient number of
  degrees of freedom,
  \begin{equation*}
    \sigma(\Td)\subset B.
  \end{equation*}
\end{theorem}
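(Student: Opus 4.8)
The plan is to argue by passing to the complement and invoking Lemma \ref{lemma:nonpoll2}. Fix the open set $B\supset\sigma(T)$ and set $A=\mathbb{C}\setminus B$. Then $A$ is closed, and since $B$ contains $\sigma(T)$ we have $A\subset\rho(T)$, so $A$ is an admissible set for Lemma \ref{lemma:nonpoll2}. First I would record that $A$ stays away from the origin: because $T$ is a compact operator on the infinite-dimensional space $L^2(\Omega)$, the value $0$ belongs to $\sigma(T)$ and hence to $B$, so that $A$ avoids the single point $z=0$ that is excluded in Lemma \ref{lemma:nonpoll1}. Thus the hypotheses needed to run the non-pollution estimate are met on all of $A$.

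The core of the argument is then immediate. Applying Lemma \ref{lemma:nonpoll2} to this $A$, for a sufficient number of degrees of freedom there is a constant $C$ with $\|R_z(\Td)\|_{\mathcal{L}(\XXd,\XXd)}\leq C$ for every $z\in A$. In particular $z-\Td$ is boundedly invertible for each $z\in A$, i.e. $A\subset\rho(\Td)$. Taking complements once more yields $\sigma(\Td)=\mathbb{C}\setminus\rho(\Td)\subset\mathbb{C}\setminus A=B$, which is exactly the claim.

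The only point requiring care --- and the main obstacle --- is the uniformity of the threshold on the number of degrees of freedom over the whole set $A$, which may be unbounded. The large-$|z|$ part is harmless: the discrete operators $\Td$ are uniformly bounded in the $\dgnorm{\cdot}$-operator norm, since coercivity and continuity of $\ad$ together with the identity $\ad(\Td u,\Td u)=(u,\Td u)$ give $\dgnorm{\Td u}\lesssim\|u\|_{L^2(\Omega)}\lesssim\dgnorm{u}$; hence for $|z|$ larger than a fixed $M$ bounding $\|\Td\|_{\mathcal{L}(\XXd,\XXd)}$ the Neumann series already provides $\|R_z(\Td)\|_{\mathcal{L}(\XXd,\XXd)}\leq(|z|-M)^{-1}$ independently of the discretisation. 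It therefore suffices to control the compact set $A\cap\{|z|\leq M\}$, on which the continuous estimate of Lemma \ref{lemma:nonpoll1} is uniform, its constant depending only on $|z|$ and being bounded below on a compact subset of $\rho(T)$, and on which the consistency error $\dgnorm{(T-\Td)u}$ tends to $0$ uniformly in $z$ because it does not involve $z$. Combining these two facts in \eqref{eq:nonpoll1} produces the single uniform threshold guaranteeing $\dgnorm{(z-\Td)u}\geq C\dgnorm{u}$ for all $z$ in the compact piece of $A$ simultaneously, which is precisely the content encapsulated in Lemma \ref{lemma:nonpoll2}.
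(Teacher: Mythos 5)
Your proof is correct and follows essentially the same route as the paper, which likewise obtains the theorem by taking the complement $A=\mathbb{C}\setminus B\subset\rho(T)$ and applying Lemma \ref{lemma:nonpoll2}. Your additional remarks --- that $0\in\sigma(T)$ by compactness so $A$ avoids the excluded point $z=0$, and the Neumann-series treatment of the unbounded part of $A$ to get a uniform threshold --- are welcome details that the paper leaves implicit, but they do not change the argument.
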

We conclude the section with the proof of Lemma \ref{lemma:nonpoll1}.
\begin{proof}[Proof of Lemma \ref{lemma:nonpoll1}]
  Consider $u\in \XXd$ and $0\neq z \in \rho(T)$. Then, by the triangle inequality,
  \begin{equation}
    \label{eq:lemma-nonpoll1}
    |z|\dgnorm{u} \leq \dgnorm{zTu} + \dgnorm{(z-T)u}.
  \end{equation}
  Let now $v=zTu$. Then, by the definition of $T$, $zu = Lv$ and
  \begin{equation*}
    L v - \frac{1}{z} v = (z - T)u 
  \end{equation*}
  with the associated boundary conditions. Since $z\in\rho(T)$, the operator
  $L-1/z$ is invertible, and
  \begin{equation*}
    \dgnorm{zTu} = \| v\|_X \leq C \| (z-T) u\|_{L^2(\Omega)}.
  \end{equation*}
  The constant $C$ clearly depends on $z$, on the operator $L$, and on $\Omega$.
  Inserting the above inequality into \eqref{eq:lemma-nonpoll1} one obtains the thesis.
\end{proof}
\subsubsection{Eigenspaces and completeness of the spectrum}
Consider a smooth closed curve $\Gamma\subset \rho(T)$. We introduce the spectral
projectors
\begin{equation}
  \label{eq:projectors}
  E = \frac{1}{2\pi i} \int_\Gamma R_z(T)dz \quad \text{and}\quad \Ed = \frac{1}{2\pi i} \int_\Gamma R_z(\Td)dz
\end{equation}
Clearly, both projectors depend on $\Gamma$, we omit that in our notation as is
customary: suppose that $\Gamma$ is fixed and that it encloses a single
eigenvalue of $T$.
The discrete projector $\Ed$ is, once again, well defined provided that the
space $\Xd$ contains a sufficient number of degrees of freedom. Suppose that
$\Gamma$ contains an eigenvalue of $T$; then, $E$ is the projector on the
eigenspace associated to the eigenvalue. The same holds for the discrete version.

We now wish to prove the convergence of the discrete projector to the continuous
one, in the operator norm. We start by noting that
\begin{equation*}
  (z-T)^{-1} - (z-\Td)^{-1} = (z-\Td)^{-1}(T-\Td)(z-T)^{-1},
\end{equation*}
therefore,
\begin{align*}
  \|R_z(T) - R_z(\Td)\|_{\mathcal{L}(L^2(\Omega), \XXd)}
  & = \|R_z(\Td)(T-\Td) R_z(T)\|_{\mathcal{L}(L^2(\Omega), \XXd)}\\
& \leq \|R_z(\Td)\|_{\XXd, \XXd)}\|(T-\Td)\|_{\mathcal{L}(L^2(\Omega), \XXd)}\| R_z(T)\|_{L^2(\Omega), L^2(\Omega)}.
\end{align*}
Due to the boundedness of the continuous, see \cite{Antonietti2006}, and discrete resolvent operators, see
Lemma \ref{lemma:nonpoll2}, we conclude that
\begin{equation}
  \label{eq:E-T}
  \|E-\Ed\|_{\mathcal{L}(L^2(\Omega), \XXd)} \leq C \|(T-\Td)\|_{\mathcal{L}(L^2(\Omega), \XXd)}.
\end{equation}
\begin{lemma}
  \label{lemma:proj-conv}
 Given the definition of $E$ and $\Ed$ in \eqref{eq:projectors}, if $\Xd$ has a
 sufficient number of degrees of freedom, there holds
 \begin{equation*}
   \|E-\Ed\|_{\mathcal{L}(L^2(\Omega), \XXd)} \to 0.
 \end{equation*}
  \end{lemma}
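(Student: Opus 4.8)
The plan is to obtain the statement directly from the resolvent identity \eqref{eq:E-T} already established, which reduces the claim to the operator-norm convergence of the solution operators,
\begin{equation*}
  \|T - \Td\|_{\mathcal{L}(L^2(\Omega), \XXd)} \to 0 \qquad \text{as } N\to\infty.
\end{equation*}
Indeed, once $\Gamma$ is fixed the constant $C$ in \eqref{eq:E-T} is finite, being produced by integrating $\|R_z(\Td)\|\,\|R_z(T)\|$ along the compact curve $\Gamma$, where $\|R_z(\Td)\|$ is uniformly bounded by Lemma \ref{lemma:nonpoll2} and $\|R_z(T)\|$ is bounded on $\Gamma\subset\rho(T)$. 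Thus the whole content of the lemma is to upgrade the pointwise statement $\dgnorm{(T-\Td)u}\to 0$ recorded before the lemma to a bound that is \emph{uniform} over the unit ball of $L^2(\Omega)$.

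The first step is a Céa-type quasi-optimality estimate for the source problem. For a fixed datum $u$ the function $(T-\Td)u = Tu - \Td u$ is exactly the dG error of the approximation of $Tu$, so combining coercivity of $\ad$ in $\dgnorm{\cdot}$, continuity of $\ad$ in $\fullnorm{\cdot}$, and Galerkin consistency yields
\begin{equation*}
  \dgnorm{(T-\Td)u} \leq C \inf_{v\in\Xd} \fullnorm{Tu - v}.
\end{equation*}

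The second step supplies the required uniformity through weighted regularity. Choosing $\gamma<2$ with $\gamma-d/2\in I_d$, one has the continuous embedding $L^2(\Omega)\hookrightarrow \mathcal{J}^0_{\gamma-2}(\Omega)$ (the relevant weight exponent $\max(2-\gamma,\rho)$ is nonnegative, hence the weight is bounded on $\Omega$), so Lemma \ref{lemma:solution-regularity} shows that $T$ maps $L^2(\Omega)$ boundedly into $\mathcal{J}^2_\gamma(\Omega)$ with $\|Tu\|_{\mathcal{J}^2_\gamma(\Omega)} \leq C\|u\|_{L^2(\Omega)}$ and $C$ independent of $u$. The approximation estimates for the graded $hp$ space on functions in $\mathcal{J}^2_\gamma(\Omega)$ then bound the best-approximation term by $\eta(N)\|Tu\|_{\mathcal{J}^2_\gamma(\Omega)}$, where $\eta(N)\to 0$ depends only on the discretisation parameters. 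Taking the supremum over $\|u\|_{L^2(\Omega)}=1$ gives $\|T-\Td\|_{\mathcal{L}(L^2(\Omega),\XXd)}\leq C\eta(N)\to 0$, and substituting into \eqref{eq:E-T} concludes the argument.

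The step I expect to be the main obstacle is exactly this passage from pointwise to operator-norm convergence: one must guarantee that the best-approximation rate does not degrade as $u$ ranges over the unit sphere of $L^2(\Omega)$. This is where the smoothing property $T:L^2(\Omega)\to\mathcal{J}^2_\gamma(\Omega)$ is indispensable, since it converts the $u$-independent regularity constant into a $u$-independent approximation rate. An alternative would be to exploit the compactness of $T$ from $L^2(\Omega)$ into $X$ together with uniform convergence of the dG projection on compact sets, but the weighted-regularity route is more transparent given the estimates already proved.
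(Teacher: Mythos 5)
Your proposal is correct and follows essentially the same route as the paper: the lemma is obtained there directly from the resolvent identity \eqref{eq:E-T} together with the operator-norm convergence of $T-\Td$, which the paper justifies (more tersely) by the same combination of coercivity/continuity of $\ad$, the regularity shift of Lemma \ref{lemma:solution-regularity}, and the $hp$ approximation properties. Your write-up merely makes explicit the uniformity over the $L^2$ unit ball that the paper leaves implicit.
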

Consider now the definitions given in \eqref{eq:delta-def}. The convergence of
the projectors allows for the proof of the convergence to zero of some
``distances'' between eigenspaces. The first almost direct result is in the
following lemma.
\begin{lemma}
  \label{lemma:dspace1}
  Let $\delta(\cdot, \cdot)$ be defined as in \eqref{eq:delta-def}. Then,
  \begin{equation*}
    \delta(\Ed(\Xd), E(X)) \to 0
  \end{equation*}
\end{lemma}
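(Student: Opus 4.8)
The plan is to reduce the statement entirely to the operator-norm convergence already furnished by Lemma \ref{lemma:proj-conv}, exploiting the idempotency of the spectral projectors. First I would fix an arbitrary $y\in\Ed(\Xd)$ with $\dgnorm{y}=1$. Since $\Ed$ is a spectral projector it satisfies $\Ed^2=\Ed$, so every element of its range is fixed by it; in particular $\Ed y = y$. The natural competitor in $E(X)$ is then $Ey$, which lies in $E(X)$ by definition of the range of $E$. For this to make sense one uses that $R_z(T)$ is bounded as an operator on $\XXd$ (established just after Lemma \ref{lemma:nonpoll1}), so that the contour integral defining $E$ may be applied to the discrete function $y\in\Xd\subset\XXd$, and its image is an element of the finite-dimensional eigenspace $E(X)$.

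With this choice the estimate is immediate from the definition of $\delta(\cdot,\cdot)$:
\[
\delta(y, E(X)) \le \dgnorm{y - Ey} = \dgnorm{\Ed y - Ey} = \dgnorm{(E-\Ed)y} \le \|E-\Ed\|_{\mathcal{L}(L^2(\Omega),\XXd)}\,\|y\|_{L^2(\Omega)},
\]
where in the last step I view $E-\Ed$ as an operator from $L^2(\Omega)$ into $\XXd$, the latter endowed with the $\dgnorm{\cdot}$ norm in which the operator norm of Lemma \ref{lemma:proj-conv} is measured. It then remains only to bound $\|y\|_{L^2(\Omega)}$. Because $\dgnorm{v}^2 = \sum_{K\in\mathcal{T}}\|v\|_{H^1(K)}^2 + \sum_{e\in\mathcal{E}}\dpe^2\dhe^{-1}\|\jump{v}\|_{L^2(e)}^2 \ge \sum_{K\in\mathcal{T}}\|v\|_{L^2(K)}^2 = \|v\|_{L^2(\Omega)}^2$, the DG norm dominates the $L^2(\Omega)$ norm, so $\|y\|_{L^2(\Omega)}\le\dgnorm{y}=1$.

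Combining the two displays gives $\delta(y, E(X)) \le \|E-\Ed\|_{\mathcal{L}(L^2(\Omega),\XXd)}$ for every admissible $y$, and taking the supremum over all $y\in\Ed(\Xd)$ with $\dgnorm{y}=1$ yields $\delta(\Ed(\Xd),E(X)) \le \|E-\Ed\|_{\mathcal{L}(L^2(\Omega),\XXd)}$, which tends to zero by Lemma \ref{lemma:proj-conv}. There is no substantial obstacle here, since the analytic work is entirely contained in Lemma \ref{lemma:proj-conv}; the only points requiring a little care are the admissibility of the competitor $Ey$ (i.e.\ that $E$ acts on discrete functions and lands in $E(X)$) and the elementary domination $\|\cdot\|_{L^2(\Omega)}\le\dgnorm{\cdot}$ needed to convert the $L^2$-to-DG operator bound into a purely DG-norm gap estimate.
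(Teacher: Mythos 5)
Your proof is correct and follows essentially the same route as the paper's: both fix $y\in\Ed(\Xd)$ with $\dgnorm{y}=1$, use $\Ed y=y$, take $Ey$ as the competitor in $E(X)$ (the paper justifies its admissibility via the remark $E(L^2(\Omega))=E(X)$, which rests on Lemma \ref{lemma:solution-regularity}), and bound $\dgnorm{(E-\Ed)y}$ by the operator norm from Lemma \ref{lemma:proj-conv} together with $\|y\|_{L^2(\Omega)}\le\dgnorm{y}$. You make explicit the $L^2$-versus-DG norm domination that the paper leaves implicit, which is a welcome clarification rather than a deviation.
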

\begin{proof}
  For any $\xd\in \Ed(\Xd)$, $\Ed(\xd) = \xd$. We remark that, due to the
  regularity result given in Lemma \ref{lemma:solution-regularity}, $E(L^2(\Omega)) = E(X)$.
  Thus, for any $\xd\in \Ed(\Xd)$ such that $\dgnorm{\xd} =1$,
  \begin{align*}
    \inf_{x\in E(X)}\dgnorm{\xd-x}
    &= \inf_{x\in E(L^2(\Omega))}\dgnorm{\xd-x}\\
    & = \inf_{y\in L^2(\Omega)} \dgnorm{\Ed \xd - E y}\\
    & \leq \| \Ed- E\|_{\mathcal{L}(L^2(\Omega), \XXd)}.
  \end{align*}
 Taking the supremum over all $\xd\in\Ed(\Xd)$ one obtains the thesis.
                  \end{proof}
This is a proof of the non pollution of the eigenspaces: we have indeed shown
that all numerical eigenfunction converge to an exact one.
We continue by
showing the completeness of the eigenspaces. This involves proving that any
exact eigenfunction is approximated by a numerical one.
\begin{lemma}
  \label{lemma:dspace2}
  For any $x\in E(X)$,
  \begin{equation*}
    \delta(x, \Ed(\Xd)) \to 0
  \end{equation*}
\end{lemma}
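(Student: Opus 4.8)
The plan is to estimate $\delta(x,\Ed(\Xd))$ by choosing an explicit near-optimal competitor in the discrete eigenspace, namely $\Ed x$, and then to reduce the bound to the operator-norm convergence of the spectral projectors already obtained in Lemma~\ref{lemma:proj-conv}. First I would observe that $\Ed x\in\Ed(\Xd)$: since $\Ed$ is a projector whose range is the finite-dimensional discrete eigenspace contained in $\Xd$, we have $\Ed x=\Ed(\Ed x)$ with $\Ed x\in\Xd$, so $\Ed x$ is indeed of the form $\Ed v$ with $v\in\Xd$. Consequently, by the definition of $\delta(\cdot,\cdot)$ in~\eqref{eq:delta-def},
\begin{equation*}
  \delta(x,\Ed(\Xd)) = \inf_{y\in\Ed(\Xd)}\dgnorm{x-y} \leq \dgnorm{x-\Ed x}.
\end{equation*}

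Next I would use that $x\in E(X)$ means $Ex=x$, since $E$ is the spectral projector onto $E(X)$ and is therefore idempotent. This turns the right-hand side into a projector difference applied to $x$:
\begin{equation*}
  \dgnorm{x-\Ed x} = \dgnorm{Ex-\Ed x} = \dgnorm{(E-\Ed)x} \leq \|E-\Ed\|_{\mathcal{L}(L^2(\Omega),\XXd)}\,\|x\|_{L^2(\Omega)}.
\end{equation*}
Here I use that $x\in E(X)\subset X\subset L^2(\Omega)$, so that $x$ is an admissible argument for the operator $E-\Ed$ viewed in $\mathcal{L}(L^2(\Omega),\XXd)$, and that $\|x\|_{L^2(\Omega)}$ is a fixed quantity independent of the discretisation. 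Combining the two displays and invoking Lemma~\ref{lemma:proj-conv}, which gives $\|E-\Ed\|_{\mathcal{L}(L^2(\Omega),\XXd)}\to 0$ as the number of degrees of freedom grows, yields $\delta(x,\Ed(\Xd))\to 0$, which is the claim.

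The argument is essentially a one-line consequence of Lemma~\ref{lemma:proj-conv}, so there is no serious analytical obstacle; the only points requiring care are bookkeeping ones. The first is the identification $\Ed x\in\Ed(\Xd)$, which relies on the range of $\Ed$ lying inside $\Xd$; the second is making sure the correct operator norm is used, namely the one measuring $L^2(\Omega)$ data against the $\dgnorm{\cdot}$ norm on $\XXd$, so that the $\dgnorm{\cdot}$ norm appearing in $\delta(\cdot,\cdot)$ matches the codomain of $E-\Ed$. Both are already set up in~\eqref{eq:E-T} and Lemma~\ref{lemma:proj-conv}, so the proof reduces to threading these objects together. Note also the pleasant symmetry with the previous completeness-type result: Lemma~\ref{lemma:dspace1} bounded $\delta(\Ed(\Xd),E(X))$ by testing elements of the discrete eigenspace against $E$, whereas here we test the exact eigenfunction $x$ against $\Ed$, and in both cases the decisive estimate is the convergence of $\|E-\Ed\|_{\mathcal{L}(L^2(\Omega),\XXd)}$.
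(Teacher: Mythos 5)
Your proof is correct, but it takes a genuinely different (and more economical) route than the paper. The paper's proof picks an arbitrary $\xd\in\Xd$, uses $\Ed\xd$ as the competitor, and splits
\begin{equation*}
  \dgnorm{\Ed \xd - x} \leq \|E-\Ed\|_{\mathcal{L}(\XXd,\XXd)}\dgnorm{\xd} + \|E\|_{\mathcal{L}(\XXd,\XXd)}\fullnorm{\xd-x},
\end{equation*}
then takes $\xd$ to be a good approximation of $x$ in $\Xd$; it therefore needs \emph{two} ingredients, the projector convergence of Lemma \ref{lemma:proj-conv} and the approximation property $\inf_{\xd\in\Xd}\fullnorm{x-\xd}\to 0$. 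You instead apply $\Ed$ directly to the continuous eigenfunction $x$, which collapses the whole argument to the single estimate $\dgnorm{(E-\Ed)x}\leq\|E-\Ed\|_{\mathcal{L}(L^2(\Omega),\XXd)}\|x\|_{L^2(\Omega)}$ and to Lemma \ref{lemma:proj-conv} alone. This is legitimate here because $R_z(\Td)$, hence $\Ed$, is defined on all of $\XXd\supset E(X)$ (indeed the operator norm in \eqref{eq:E-T} is taken over $L^2(\Omega)$ data), and because the range of $\Ed$ lies in $\Xd$ --- the contour $\Gamma$ encloses only nonzero eigenvalues, so every (generalized) eigenvector $u$ in the spectral subspace satisfies $u=\Td(u/\mu)\in\Xd$, whence $\Ed x=\Ed(\Ed x)\in\Ed(\Xd)$ as you note. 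Your bookkeeping on both points is right; the only thing worth making explicit is that last observation about the range of $\Ed$, since $\Ed(\Xd)$ in \eqref{eq:delta-def} literally denotes the image of $\Xd$, not of $L^2(\Omega)$. What the paper's longer route buys is that it never applies $\Ed$ outside the discrete space, at the cost of invoking the approximability of $x$ by $\Xd$; what yours buys is a one-line proof that parallels Lemma \ref{lemma:dspace1} exactly.
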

\begin{proof}
  Let $x\in E(X)$ and $\xd\in\Xd$. Then,
  \begin{equation*}
    \dgnorm{\Ed x_\delta - x} \leq \|E\|_{\mathcal{L}(\XXd, \XXd)}\fullnorm{x_\delta -x}+\|E - \Ed\|_{\mathcal{L}(\XXd, \XXd)}\dgnorm{\xd}.
  \end{equation*}
  Taking $\xd$ as the projection of $x$ in $\Xd$ and thanks to the convergence
  of $\Ed$ towards $E$, we obtain the thesis.
\end{proof}
We now restrict our focus to finite dimensional eigenspaces. Let then $n =
\dim(E(X))$ and $n_\delta = \dim(\Ed(\Xd))$: if $n=\infty$, then $n_\delta \to
\infty$; we consider the case where $n$ is finite.
If $n$ is finite, the above lemma implies that
\begin{equation*}
  \delta(E(X), \Ed(\Xd)) \to 0.
\end{equation*}

\begin{remark}
  \label{remark:reg-resolvent}
  The eigenspace $E(X)$ is invariant for $T$, hence if $x\in E(X)$, then
  $R_z(T)x\in E(X)$.
\end{remark}

Consider then an $x\in E(X)$: we have
\begin{equation}
  \label{eq:exprate1}
  \inf_{x_\delta\in \Xd}\dgnorm{\Ed x_\delta - x} \leq \|\Ed \|_{\mathcal{L}( \XXd, \XXd)} \inf_{x_\delta\in \Xd}\fullnorm{ x_\delta -x } + \dgnorm{ \left( E- E_\delta \right)x} 
\end{equation}
 Due to the approximation properties
 of $\Xd$ there exist $C,b>0$ such that
 \begin{equation}
  \label{eq:exprate2}
   \inf_{x_\delta\in \Xd}\fullnorm{x - x_\delta} \leq C e^{-b N^{1/(d+1)}},
 \end{equation}
 with $N=\dim(\Xd)$. In addition,
 \begin{align*}
\sup_{\substack{x\in E(X)\\\|x\|=1}}\fullnorm{\left( R_z(T) - R_z(\Td) \right)x}
  & =\sup_{\substack{x\in E(X)\\\|x\| = 1}} \fullnorm{ \left( R_z(\Td)(T-\Td) R_z(T)\right)x}\\
  & \leq C \|R_z(\Td)\|_{\mathcal{L}(\XXd, \XXd)} \\
   &\qquad \times \|(T-\Td)_{|_{E(X)}}\|_{\mathcal{L}(L^2(\Omega), \XXd))} \|R_z(T)\|_{\mathcal{L}(L^2(\Omega), L^2(\Omega))},
  \end{align*}
As above, the boundedness of the continuous, see \cite{Antonietti2006}, and discrete resolvent operators, see
Lemma \ref{lemma:nonpoll2}, imply that
\begin{equation}
  \label{eq:projector-bound}
  \sup_{\substack{x \in E(X)\\\|x\| = 1}}\fullnorm{(E- E_\delta)x} \leq C\sup_{\substack{x \in E(X)\\\|x\| = 1}}\fullnorm{(T-\Td)x}
  \end{equation}
Thanks to Remark \ref{remark:reg-resolvent}, the right hand side of the above equation is the error of the numerical method
for a problem with source term belonging to $\mathcal{J}^\varpi_\gamma(\Omega)$: by Lemma \ref{lemma:solution-regularity},
the approximation properties of the \hp{} space, and the compactness of the
unitary ball in the finite dimensional space $E(X)$,
there exist $C,b>0$ such that
\begin{equation}
  \label{eq:exprate3}
  \sup_{\substack{x\in E(X)\\\|x\| = 1}}\fullnorm{(E- E_\delta)x} \leq C e^{-b N^{1/(d+1)}}.
\end{equation}
 Combining \eqref{eq:exprate1}, \eqref{eq:exprate2} and \eqref{eq:exprate3}, we
 have then the explicit rate
\begin{equation*}
  \delta(E(X), \Ed(\Xd)) \leq C e^{-bN^{1/(d+1)}}.
\end{equation*}
We summarize this in the following statement.
\begin{theorem}
  \label{th:deltahat}
  If $\dim(E(X))< \infty$ and for a sufficient number of degrees of freedom,
  there exist $C,b>0$ such that
  \begin{equation*}
    \delta(E(X), \Ed(\Xd)) \leq C e^{-bN^{1/(d+1)}}.
  \end{equation*}
      \end{theorem}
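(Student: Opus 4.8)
The plan is to read the statement off directly from the three estimates \eqref{eq:exprate1}, \eqref{eq:exprate2} and \eqref{eq:exprate3} assembled immediately above, so the argument is a synthesis rather than a fresh computation. First I would unfold the definition in \eqref{eq:delta-def}: since $\delta(E(X),\Ed(\Xd)) = \sup_{x\in E(X),\,\dgnorm{x}=1}\inf_{z\in\Ed(\Xd)}\dgnorm{x-z}$ and $\Ed x_\delta\in\Ed(\Xd)$ for every $x_\delta\in\Xd$, it suffices to bound $\inf_{x_\delta\in\Xd}\dgnorm{\Ed x_\delta - x}$ uniformly over unit-norm $x\in E(X)$. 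This is exactly the quantity appearing on the left of \eqref{eq:exprate1}.

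Next I would invoke \eqref{eq:exprate1}, which splits this quantity into a best-approximation term $\|\Ed\|_{\mathcal{L}(\XXd,\XXd)}\inf_{x_\delta\in\Xd}\fullnorm{x_\delta-x}$ and a projector-error term $\dgnorm{(E-\Ed)x}$. The first is controlled by the $hp$ approximation estimate \eqref{eq:exprate2}; the essential input here is that, by Corollary \ref{cor:analytic}, the exact eigenfunctions belong to the analytic weighted class $\mathcal{J}^\varpi_\gamma(\Omega)$, which is precisely the regularity that guarantees exponential convergence of the isotropically graded $hp$ space. The second term is controlled by \eqref{eq:exprate3}, which was obtained from the resolvent identity together with the uniform boundedness of $R_z(\Td)$ (Lemma \ref{lemma:nonpoll2}) and of $R_z(T)$; this reduces the projector error to the method error $\fullnorm{(T-\Td)x}$ for a source in $E(X)$. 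Invariance of $E(X)$ under $R_z(T)$ (Remark \ref{remark:reg-resolvent}) then ensures that this source again has analytic weighted regularity, so the same exponential rate $Ce^{-bN^{1/(d+1)}}$ applies to it through Lemma \ref{lemma:solution-regularity} and the approximation properties of $\Xd$.

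The one point deserving care — and the place where the hypothesis $\dim(E(X))<\infty$ enters — is the passage from the pointwise exponential bounds, valid for each fixed $x$, to a single bound uniform over the whole unit sphere of $E(X)$. Because $E(X)$ is finite dimensional its unit ball is compact, so the constants $C,b$ in \eqref{eq:exprate2} and \eqref{eq:exprate3} can be taken independent of $x$; this is the role of the compactness remark preceding \eqref{eq:exprate3}. Combining the two exponential bounds via \eqref{eq:exprate1} and taking the supremum over unit-norm $x\in E(X)$ then yields $\delta(E(X),\Ed(\Xd))\le Ce^{-bN^{1/(d+1)}}$. I do not anticipate any genuine obstacle beyond this uniformity step, since all the analytic regularity and resolvent-boundedness ingredients have already been established.
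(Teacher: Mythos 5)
Your proposal is correct and follows essentially the same route as the paper: the theorem is stated precisely as a summary of the combination of \eqref{eq:exprate1}, \eqref{eq:exprate2} and \eqref{eq:exprate3}, with the finite dimensionality of $E(X)$ entering exactly where you place it, namely in making the exponential bounds uniform over the unit ball via compactness. Your additional observation that the eigenfunction regularity feeding \eqref{eq:exprate2} comes from Corollary \ref{cor:analytic} is consistent with the paper's appeal to Lemma \ref{lemma:solution-regularity} and the approximation properties of the $hp$ space.
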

\subsection{Convergence of the eigenfunctions and eigenvalues}
\label{sec:lin-convergence}
In this section we consider the convergence of the numerical
eigenfunctions and eigenvalues obtained through the $\hp{}$ approximation. As
far as the eigenspaces are concerned, Lemma \ref{lemma:dspace1} proves that they
are not polluted and Lemma \ref{lemma:dspace2} proves that they are complete. 
As a direct consequence of Theorem \ref{th:deltahat}, furthermore, we have that
for any $x\in E(X)$ there exists $\xd\in \Ed(\Xd)$
such that
\begin{equation*}
  \dgnorm{x-\xd}\leq C e^{-bN^{1/(d+1)}}.
\end{equation*}

We now consider the eigenvalues; we will do so in the case of a symmetric
numerical scheme.
\subsubsection{Convergence of the eigenvalues}
We are mainly interested in the analysis of the convergence of the eigenvalues for the symmetric interior penalty method, obtained
by choosing $\theta=1$ in \eqref{eq:bilin-discont}. For the sake of generality, the first part of the section will, nonetheless, hold
for non-symmetric methods and for a non symmetric operator, but we will indicate when the hypothesis of symmetry
of the numerical method will become necessary. The final result
obtained for the SIP method will be stronger than what can
be obtained in the case of non-symmetric methods, since
they lack the property of adjoint consistency.

We start by considering the operator $\Ld = \Ed_{|_{E(X)}}: E(X)\to \Ed(\Xd)$.
For a sufficient number of degrees of freedom, the operator is invertible.
For any $x\in E(X)$,
\begin{equation*}
  \dgnorm{x} \leq \dgnorm{(E-\Ed)x} + \dgnorm{\Ed x}
\end{equation*}
and the convergence of $E-\Ed$ in the operator norm implies that for a
sufficient number of degrees of freedom, $\Ld^{-1}$ is bounded.
Let us then introduce the operators
\begin{equation}
  \Th = T_{|_{E(X)}} \quad \text{and} \quad\Thd = \Ld^{-1}\Td \Ld,
\end{equation}
both defined on the spaces $E(X)\to E(X)$. We consider the case where $\Gamma$,
introduced in \eqref{eq:projectors},
contains a single eigenvalue $\mu$ of $T$, with multiplicity $n$. Theorem
\ref{th:deltahat} then implies (see \cite{Antonietti2006} for the details) that there exist 
$\mdj$, $j=1,\dots,n$ that converge towards $\mu$.
For every $\mdj$ there exists, then, an $x_j\in E(X)$ such that
\begin{equation*}
  \Thd x_j = \mdj x_j.
\end{equation*}
Let now $T'$ and $\Td'$ be the adjoint operators to $T$ and $\Td$, and let $E'$
and $\Ed'$ be the associated spectral projectors.
Furthermore, consider a $y\in E'(X)$ such that $(x,y) = 1$: since for all $x\in E(X)$, $(T-\mu)x =
0$ (since all eigenvalues have ascent one), we have
\begin{align*}
  \mu - \mdj
  &= \langle (\mu - \Thd) x, y \rangle\\
  & =\langle  (T-\Thd)x, y \rangle \\
  & =\langle  (T-\Ld^{-1}\Td \Ed)x, y \rangle 
\end{align*}
Note now that $\Ld^{-1}\Ed_{|_{E(X)}} = I$ and that $\Td$ and $\Ed$ commute on
$E(X)$, thus
\begin{align*}
  \mu - \mdj
  &= \langle (\Ld^{-1}\Ed)(T-\Td) x,y \rangle\\ 
  &= \langle (T-\Td) x,y \rangle + \langle (\Ld^{-1}\Ed-I)(T-\Td)x,y \rangle.
\end{align*}
We remark that $\ker((\Ld^{-1}\Ed-I)_{|_{E(X)}}) =
\ker(\Ed)^\perp$, hence \[\Ld^{-1}\Ed-I : E(X)\to \im(\Ed')^\perp.\]
Using also the fact that $E'y = y$, the second term at the right hand side above can be written as
\begin{equation*}
  \langle (\Ld^{-1}\Ed-I)(T-\Td)x,y \rangle = \langle (\Ld^{-1}\Ed-I)(T-\Td)x,(E'-\Ed')y \rangle.
\end{equation*}
As already shown $\Ld^{-1}$ is bounded for a sufficient number of degrees of
freedom, and so is $\Ed$. Let us
now choose
$\|x\| = \|y\| = 1$: we have
\begin{equation*}
  \left|  \langle (\Ld^{-1}\Ed-I)(T-\Td)x,y \rangle \right|\leq  C \sup_{\substack{x\in E(X)\\\|x\|=1}}\fullnorm{(T-\Td)x}\sup_{\substack{y\in E'(X)\\\|y\|=1}}\fullnorm{(T'-\Td')y}
\end{equation*}
where
we have used \eqref{eq:projector-bound} for the adjoint spectral projectors.
We introduce two orthonormal bases $\{\phi_i\}_i$ and $\{\phi'_j\}_j$
for $E(X)$ and $E'(X)$ respectively. Since the spaces are finite dimensional,
i.e., $n=\dim(E(X))=\dim(E'(X))< \infty$, there exists a
constant $C>0$
such that 
\begin{align*}
\langle \left( T - \Td \right)x, y \rangle
  & \leq \sup_{\|x\|=\|y\| = 1} \left| \langle \left( T - \Td \right)x, y \rangle \right|\\
 & \leq C \sum_{i,j=1}^n  \left| \langle \left( T - \Td \right)\phi_i, \phi'_j \rangle \right|,
\end{align*}
where $C$ depends on $n$.
We conclude that
\begin{equation}
  \label{eq:eigenvalue1}
  |\mu - \mdj| \leq C \left(  \sum_{i,j=1}^n \langle
  (T-\Td)\phi_i, \phi'_j \rangle + \sup_{\substack{x\in E(X)\\\|x\|=1}}\fullnorm{(T-\Td)x}\sup_{\substack{y\in E'(X)\\\|y\|=1}}\fullnorm{(T'-\Td')y}\right),
\end{equation}

\begin{remark}
  The above estimate \eqref{eq:eigenvalue1} holds since we have considered a
  case where all eigenvalues have ascent one. If this were not the case, one
  would find that 
  \begin{equation*}
  |\mu - \frac{1}{n}\sum_{j=1}^n\mdj| \leq C \left(  \sum_{i,j=1}^n \langle (T-\Td)\phi_i, \phi'_j \rangle +\sup_{\substack{x\in E(X)\\\|x\|=1}}\fullnorm{(T-\Td)x}\sup_{\substack{y\in E'(X)\\\|y\|=1}}\fullnorm{(T'-\Td')y}\right),
  \end{equation*}
  and 
  \begin{equation*}
\max_{j=1,\dots, n}  |\mu - \mdj| \leq C \left(  \sum_{i,j=1}^n \langle (T-\Td)\phi_i, \phi'_j \rangle +\sup_{\substack{x\in E(X)\\\|x\|=1}}\fullnorm{(T-\Td)x}\sup_{\substack{y\in E'(X)\\\|y\|=1}}\fullnorm{(T'-\Td')y}\right)^{1/\alpha},
  \end{equation*}
  $\alpha$ being the ascent of the eigenvalue $\mu$, see \cite{Descloux1978b, Handbook2}.
\end{remark}
\subsubsection{Convergence of the eigenvalues for the SIP method}
We now restrict ourselves to the symmetric interior penalty method and consider
the fact that our operator is self-adjoint: then, $T'=T$, $\Td' = \Td$, and \eqref{eq:eigenvalue1} reads
\begin{equation}
  \label{eq:eigenvalue2}
  |\mu - \mdj| \leq C \left(  \sum_{i,j=1}^n \langle (T-\Td)\phi_i, \phi_j \rangle + \sup_{\substack{x\in E(X)\\\|x\|=1}}\fullnorm{(T-\Td)v}^2\right).
\end{equation}
At this stage, the goal is in bounding the first term at the right hand side of
the inequality by something quadratic in nature, to show that it converges as
fast as the second term. This is where the adjoint consistency of the SIP method is crucial.
Let $ y \in E(X)$ with $\|y\|=1$ and let $\psi\in X$ be the solution to
the adjoint problem
\begin{equation}
  \label{eq:adjoint}
\begin{aligned}
  L\psi  &= y \text{ in }\Omega\\
  B\psi &= 0\text{ on }\partial \Omega.
\end{aligned}
\end{equation}
This implies $Ty = \psi$, hence
\begin{equation}
  \label{eq:adjoint-appx}
    \inf_{\vd\in\Xd}\fullnorm{\psi -\vd} \leq C \sup_{\substack{x\in E(X)\\\|x\|=1}} \inf_{\vd\in\Xd}\fullnorm{x-\vd}. 
  \end{equation}
Consider then $x,y\in E(X)$, with $\|x\|=\|y\|=1$:
\begin{align*}
  \langle (T-\Td)x, y \rangle
  & = \langle (T-\Td)x, L\psi \rangle\\
  & = \ad((T-\Td)x, \psi)\\
  & = \ad((T-\Td)x, \psi- \vd).
\end{align*}
Finally, by the continuity of the bilinear form, the quasi optimality of the
discontinuous Galerkin method, and using \eqref{eq:adjoint-appx}, we
conclude that
\begin{equation*}
\begin{split}
  | \langle (T-\Td)x, y \rangle| 
  & \leq C \fullnorm{(T-\Td) x}\fullnorm{\psi-\vd}\\
  & \leq C \sup_{\substack{x\in E(X)\\\|x\|=1}} \inf_{\vd\in\Xd}\fullnorm{x-\vd}^2.
  \end{split}
\end{equation*}
Since clearly
\[\sup_{\substack{x\in E(X)\\\|x\|=1}} \fullnorm{(T-\Td)x}^2\leq C \sup_{\substack{x\in E(X)\\\|x\|=1}} \inf_{\vd\in\Xd}\fullnorm{x-\vd}^2,\]
from \eqref{eq:eigenvalue2} we conclude that 
\begin{equation*}
    \max_{j=1,\dots, n}|\mu - \mdj| \leq C\sup_{\substack{x\in E(X)\\\|x\|=1}} \inf_{\vd\in\Xd}\fullnorm{x-\vd}^2.
\end{equation*}
Since for every eigenvalue $\mu$ of $T$, $1/\mu$ is an eigenvalue of
\eqref{eq:mainproblem}, we have proven the following theorem.
\begin{theorem}
  \label{th:convergence}
  Let $\lambda$ be an eigenvalue of problem \eqref{eq:mainproblem} with
  associated eigenspace $U = \spn(u_1, \dots, u_n)$, with $\|u_i\| = 1$ for
  $i=1, \dots, n$. Then, there exist $n$ eigenvalue-eigenfunction
  pairs $\{(\ldj, \udj)\}_j$ of the
  finite dimensional problem \eqref{eq:discrete-prob}
    such that for all $j=1,\dots, n$
  \begin{align*}
    \min_{u\in U}\dgnorm{u-\udj} &\lesssim \sup_{u\in U}\inf_{\vd\in\Xd}\fullnorm{ u -\vd}\\
    |\lambda-\lambda_{\delta j}| &\lesssim \sup_{u\in U}\inf_{\vd\in\Xd}\fullnorm{ u -\vd}.
  \intertext{Furthermore, if the numerical solutions are obtained with the SIP method,}
    |\lambda-\lambda_{\delta j}| &\lesssim \sup_{u\in U}\inf_{\vd\in\Xd}\fullnorm{ u -\vd}^2.
  \end{align*}
  Finally, there are no spurious numerical eigenvalues or eigenvectors.
\end{theorem}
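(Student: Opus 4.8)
The plan is to read the theorem off as the synthesis of the results assembled throughout the section, after translating between the solution operator $T$ and the original eigenproblem \eqref{eq:mainproblem}. The starting point is the correspondence $\mu=1/\lambda$: the eigenspace $U$ of \eqref{eq:mainproblem} for $\lambda$ coincides with the eigenspace $E(X)$ of $T$ for $\mu=1/\lambda$, and likewise each discrete pair $(\ldj,\udj)$ of \eqref{eq:discrete-prob} corresponds to a pair $(\mdj,\udj)$ of $\Td$ with $\mdj=1/\ldj$. I would first fix the contour $\Gamma$ enclosing the single eigenvalue $\mu$ of multiplicity $n$ and invoke Lemma \ref{lemma:proj-conv} together with the theory of \cite{Antonietti2006}: for a sufficient number of degrees of freedom $\Ed$ has rank $n$, producing exactly $n$ discrete eigenvalues $\mdj$, $j=1,\dots,n$, inside $\Gamma$, each converging to $\mu$, with eigenfunctions in $\Ed(\Xd)$.

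For the eigenfunction estimate I would write each discrete eigenfunction as $\udj=\Ld x_j=\Ed x_j$ with $x_j\in E(X)$, so that $Ex_j=x_j$ and hence
\begin{equation*}
  \min_{u\in U}\dgnorm{u-\udj}\le \dgnorm{x_j-\udj}=\dgnorm{(E-\Ed)x_j}\le \fullnorm{(E-\Ed)x_j}.
\end{equation*}
The right-hand side is controlled by \eqref{eq:projector-bound} and the quasi-optimality of the dG method, which together bound it by $\sup_{u\in U}\inf_{\vd\in\Xd}\fullnorm{u-\vd}$; this is precisely the first asserted inequality, and is consistent with $\delta(\Ed(\Xd),E(X))\to0$ from Lemma \ref{lemma:dspace1}.

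For the eigenvalue estimates I would start from \eqref{eq:eigenvalue1}. In the general case each term $\langle(T-\Td)\phi_i,\phi'_j\rangle$ is bounded by $\fullnorm{(T-\Td)\phi_i}$ and hence, through quasi-optimality, by $\sup_{u\in U}\inf_{\vd}\fullnorm{u-\vd}$, so $|\mu-\mdj|$ inherits the linear rate. For the SIP method the decisive improvement comes from adjoint consistency: I would bound $\langle(T-\Td)x,y\rangle$ quadratically by passing to the adjoint solution $\psi$ of \eqref{eq:adjoint}, using $Ty=\psi$, Galerkin orthogonality, and the approximation bound \eqref{eq:adjoint-appx}, exactly as carried out just above the statement, to obtain $|\mu-\mdj|\lesssim\sup_{u\in U}\inf_{\vd}\fullnorm{u-\vd}^2$.

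Finally I would transfer the rates back to $\lambda$ through
\begin{equation*}
  |\lambda-\ldj|=\frac{|\mu-\mdj|}{|\mu|\,|\mdj|},
\end{equation*}
and, since $\mdj\to\mu\neq0$ so that $|\mu\mdj|$ is bounded below for sufficiently many degrees of freedom, conclude that $|\lambda-\ldj|$ obeys the same linear (resp. quadratic, for SIP) bound as $|\mu-\mdj|$. The absence of spurious eigenvalues and eigenvectors is then Theorem \ref{th:non-poll-spectrum} together with the non-pollution of eigenspaces from Lemma \ref{lemma:dspace1}. Since every analytic ingredient is already in place, the remaining work is bookkeeping: ensuring the constants in the map $\mu\mapsto1/\mu$ stay uniform and that the $L^2$-normalisation $\|u_i\|=1$ is compatible with the $\dgnorm{\cdot}$-measured errors. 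The only genuinely delicate step — and the one already resolved above — is the quadratic eigenvalue bound, which rests entirely on the adjoint consistency of the symmetric scheme.
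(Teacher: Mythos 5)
Your proposal is correct and follows essentially the same route as the paper: the theorem is indeed the synthesis of the preceding derivation, and you correctly assemble the projector convergence and bound \eqref{eq:projector-bound} for the eigenfunction estimate, the estimate \eqref{eq:eigenvalue1} with the adjoint-consistency argument for the quadratic SIP rate, the non-pollution results, and the transfer $\mu=1/\lambda$ back to the original problem. Your explicit bound $\min_{u\in U}\dgnorm{u-\udj}\le\dgnorm{(E-\Ed)x_j}$ and the explicit control of $|\lambda-\ldj|$ via $|\mu-\mdj|/(|\mu||\mdj|)$ merely spell out steps the paper leaves implicit.
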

Given the approximation properties of the $\hp{}$ method and considering that
all eigenfunctions of \eqref{eq:mainproblem} belong to the space
$\mathcal{J}^\varpi_\gamma(\Omega)$ for a $\gamma > d/2$, we can also provide
the following corollary.
\begin{corollary}
 Let $\lambda$, $u$, $U$, $\ldj$, and $\udj$ be defined as in Theorem
 \ref{th:convergence} and let $N = \dim(\Xd)$. Then, there exist $C, b>0$ such
 that for all $u\in U$, for all $j=1,\dots, n$
   \begin{align*}
    \dgnorm{u-\udj} &\leq C e^{-b N^{1/(d+1)}}\\
    |\lambda-\lambda_{\delta j}| &\leq C e^{-b N^{1/(d+1)}}
  \intertext{Furthermore, if the numerical solutions are obtained with the SIP method,}
    |\lambda-\lambda_{\delta j}| &\leq C e^{-2b N^{1/(d+1)}}.
  \end{align*}

\end{corollary}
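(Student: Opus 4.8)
The plan is to combine the abstract quasi-optimal bounds of Theorem \ref{th:convergence} with the exponential approximation property of the graded \hp{} space. Every estimate in Theorem \ref{th:convergence} has its right-hand side controlled by the single best-approximation quantity $\sup_{u\in U}\inf_{\vd\in\Xd}\fullnorm{u-\vd}$, so the entire corollary reduces to showing that this quantity decays like $Ce^{-bN^{1/(d+1)}}$ with $N=\dim(\Xd)$, and then substituting.

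First I would invoke the regularity of the eigenfunctions. By Corollary \ref{cor:analytic}, the hypothesis $V\in\mathcal{K}^{\varpi,\infty}_{\epsilon-2}(\Omega)$ forces every $u$ in the eigenspace $U=\spn(u_1,\dots,u_n)$ into the weighted-analytic class $\mathcal{J}^\varpi_\gamma(\Omega)$ for some $\gamma\in(d/2,d/2+\epsilon)$. This is exactly the regularity for which the isotropically graded \hp{} space is tailored, and it is the same situation already exploited in \eqref{eq:exprate2}: the \hp{} approximation theory for point singularities on geometrically graded meshes yields, for each fixed $u\in U$, a discrete function $\vd\in\Xd$ with $\fullnorm{u-\vd}\leq Ce^{-bN^{1/(d+1)}}$, where $b$ depends on the analyticity constants of $u$ together with the grading ratio $\sigma$ and the polynomial slope $\slope$.

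Next I would pass to the supremum over the unit sphere of $U$. Since $U$ is finite-dimensional, its unit ball is compact and all norms on it are equivalent; writing a general $u\in U$ in the basis $\{u_i\}$ and using the triangle inequality bounds its best-approximation error by a constant times the maximum of the $n$ individual errors, so that $\sup_{u\in U,\,\|u\|=1}\inf_{\vd\in\Xd}\fullnorm{u-\vd}\leq Ce^{-bN^{1/(d+1)}}$ with a possibly larger $C$ but the same rate $b$. Substituting this bound into the first two estimates of Theorem \ref{th:convergence} gives $\dgnorm{u-\udj}\leq Ce^{-bN^{1/(d+1)}}$ and $|\lambda-\ldj|\leq Ce^{-bN^{1/(d+1)}}$. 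For the SIP method the eigenvalue estimate of Theorem \ref{th:convergence} carries the \emph{square} of the best-approximation error, so the same substitution produces $|\lambda-\ldj|\leq C(e^{-bN^{1/(d+1)}})^2=Ce^{-2bN^{1/(d+1)}}$, which is precisely the doubled exponent claimed.

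The only genuine content beyond this bookkeeping is the exponential \hp{} approximation estimate in $\mathcal{J}^\varpi_\gamma(\Omega)$, and that is where I expect the real work to sit. I would take it from the \hp{} approximation theory on geometric meshes (cf.\ \cite{Schotzau2013a,Schotzau2013b}), checking two points specific to the present setting: that the inhomogeneous weight $\gamma>d/2$ correctly accommodates the nonzero nodal value $u(\fc)$ that the $\mathcal{J}$-spaces permit, and that the full broken norm $\fullnorm{\cdot}$---including the face-gradient term absent from $\dgnorm{\cdot}$---is controlled at the same rate. Once this estimate is in hand, the corollary follows by direct substitution into Theorem \ref{th:convergence}.
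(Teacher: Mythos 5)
Your proposal is correct and follows exactly the route the paper intends: the paper states the corollary without a written proof, deriving it directly from Theorem \ref{th:convergence} combined with the weighted-analytic regularity $u\in\mathcal{J}^\varpi_\gamma(\Omega)$, $\gamma>d/2$ (Corollary \ref{cor:analytic}) and the exponential \hp{} best-approximation bound already used in \eqref{eq:exprate2}, with the passage to the supremum over the finite-dimensional eigenspace handled by compactness as in the derivation of \eqref{eq:exprate3}. Your substitution argument, including the squaring of the rate for the SIP method, matches the paper's reasoning.
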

\section{Numerical results}
\label{sec:numerical-results}
In this section, we perform some numerical experiments on the linear eigenvalue
problem of finding $(\lambda, u) \in \mathbb{R}\times H^1(\Omega)$ such that $\|
u\|_{L^2(\Omega) } =1$ and
\begin{equation}
  \label{eq:2d-num-prob}
  \begin{aligned}
    (-\Delta + V) u &= \lambda u \text{ in }\Omega\\
    u&= 0 \text{ on }\partial \Omega.
  \end{aligned}
\end{equation}
The domain $\Omega$ is the $d$-dimensional cube with unitary edge $(-1/2, 1/2)^d$,
and $V$ is a potential with a singularity at the origin that will be specified
in the single cases. Since no exact solution is available, every numerical
solution is compared with the solution obtained at a higher degree of refinement
than those presented.

In all cases, the mesh is isotropically and geometrically refined around the
origin, with a geometric refinement ratio $\sigma = 1/2$. All elements are
axiparallel $d$-dimensional cubes. This means that,
introducing the refinement layers $\Omega_j$, $j=1,\dots, \ell$, such that for all $K\in \Omega_j$,
\begin{equation*}
   \inf_{x\in K}\|x\|_{\ell^\infty} = \sigma^{j+1}\quad j = 1,\dots, \ell-1
\end{equation*}
we
have
\begin{equation*}
  |K| = h_K^d = \sigma^{(j+1)d}.
\end{equation*}
Furthermore, the elements in $\Omega_\ell$ have a vertex on the singularity.
The polynomial slope $\slope$, defined as the parameter such that for all
$\vd\in\Xd$, if an element $K\in\Omega_j$ then
\begin{equation*}
  \vd_{|_K} \in \mathbb{Q}_{p_j}(K),
\end{equation*}
with
\begin{equation*}
  p_j = p_0 + \lfloor \slope (\ell - j) \rfloor
\end{equation*}
is instead variable between experiments, and it is
one of the main parameters whose role in the approximation we investigate. The
base polynomial degree is fixed at $p_0=1$.

All the simulations are obtained with C++ code based on the library
\texttt{deal.II} \cite{dealII85}. Furthermore, we use PETSc \cite{petsc} for the
solution of algebraic linear systems, and SLEPc \cite{slepc} for the solution of the
algebraic eigenvalue problem. The actual methods used will vary between the two
and the three dimensional cases, and will be specified in the respective sections.
The boundary conditions are imposed weakly, as is customary in the framework of
discontinuous Galerkin methods, so no spurious eigenvalue is introduced, as
shown in 
Section \ref{sec:numerical-appx}.

The results we will show in the following concern the estimation of the
$\mathrm{DG}$, $L^2(\Omega)$ and $L^\infty(\Omega)$ norms of the error, and
of the difference between the computed and the ``exact'' eigenvalue.
Furthermore, we will try to estimate the constants $b_X$ such that
\begin{equation*}
  \| u - \ud \|_X \leq C_X\exp({-b_X N^{1/(d+1)}}),
\end{equation*}
for $X= \mathrm{DG}, L^2(\Omega), L^\infty(\Omega)$, and 
\begin{equation*}
  | \lambda - \ld| \leq C_\lambda \exp( -b_{\lambda}N^{1/(d+1)} ).
\end{equation*}
Here, $\ud\in \Xd$ (resp. $\ld\in\mathbb{R}$) is the numerical eigenfunction
(resp. eigenvalue) computed
with $\dim(\Xd) = N$ and $u$ (resp. $\lambda$) is the exact one.

We start by illustrating the results obtained in the framework of a two
dimensional approximation.
\subsection{Two dimensional case}
\label{sec:num-lin-2d}
We solve problem \eqref{eq:2d-num-prob} with $d=2$ on a mesh built as shown in
Figure \ref{fig:2d-mesh}. An example of a numerically computed eigenfunction is
shown in Figure \ref{subfig:2d-lin-sol}. We can see the combination of the effect
of the laplacian with homogeneous Dirichlet boundary conditions and of the potential.
The cusp introduced by the potential is partially hidden by the rest of the
solution; in Figure \ref{subfig:2d-lin-sol-proj}, where a close up of the solution over a line
is represented, we can see it more clearly.

We consider three different potentials, given by $V(x) = r^{-\alpha}$, with
$\alpha \in\{ 1/2, 1, 3/2\}$. Clearly, the bigger the exponent $\alpha$, the lower the
regularity of the exact solution. In particular, from the point of view of
classical Sobolev spaces, denoting $u_\alpha$ as the solution of
\begin{align*}
  (-\Delta + r^{-\alpha }) u_\alpha  &= \lambda_\alpha u_\alpha\text{ in }\Omega\\
  u_\alpha &= 0\text{ on }\dOmega,
\end{align*}
we have $u_\alpha \in H^{3-\alpha-\xi}(\Omega)$, for any $\xi>0$. In particular,
the problem with $\alpha=3/2$ roughly corresponds to a two dimensional elliptic
problem in a domain with a crack, see \cite{Costabel2002}.
When considering weighted Sobolev spaces, we have
\begin{equation}
  u_\alpha \in \mathcal{J}^\varpi_{3-\alpha-\xi}(\Omega),
\end{equation}
again for any $\xi>0$.

From the algebraic point of view, the eigenpairs are computed using a
Krylov-Schur method \cite{Stewart2002}. Furthermore, a shift and invert spectral
transformation is
used to precondition and speed up computations. Due to the relatively small size of the problems
we consider here, the linear system introduced by the shift and invert spectral
transformation is solved via an LU decomposition. When considering the problem
set in three dimensions, we will see how to deal with problems with more degrees
of freedom, where memory availability becomes a concern.

\begin{figure}
  \centering
  \includegraphics[width=.35\textwidth]{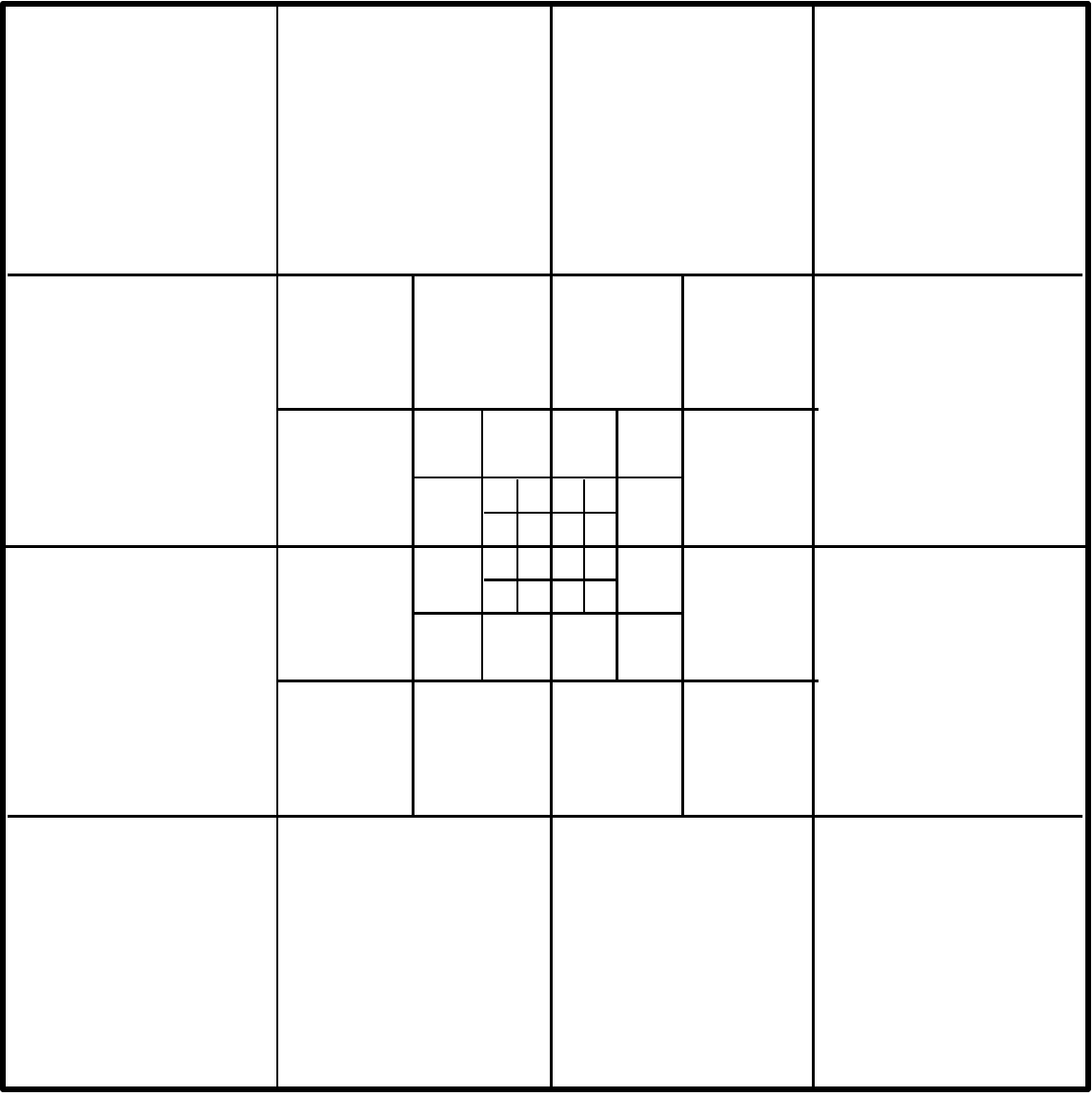}
  \caption{Example of a two dimensional mesh, with $\ell = 5$}
  \label{fig:2d-mesh}
\end{figure}
\begin{figure}
  \centering
  \begin{subfigure}[t]{.5\textwidth}
  \centering
  \includegraphics[width=.8\textwidth]{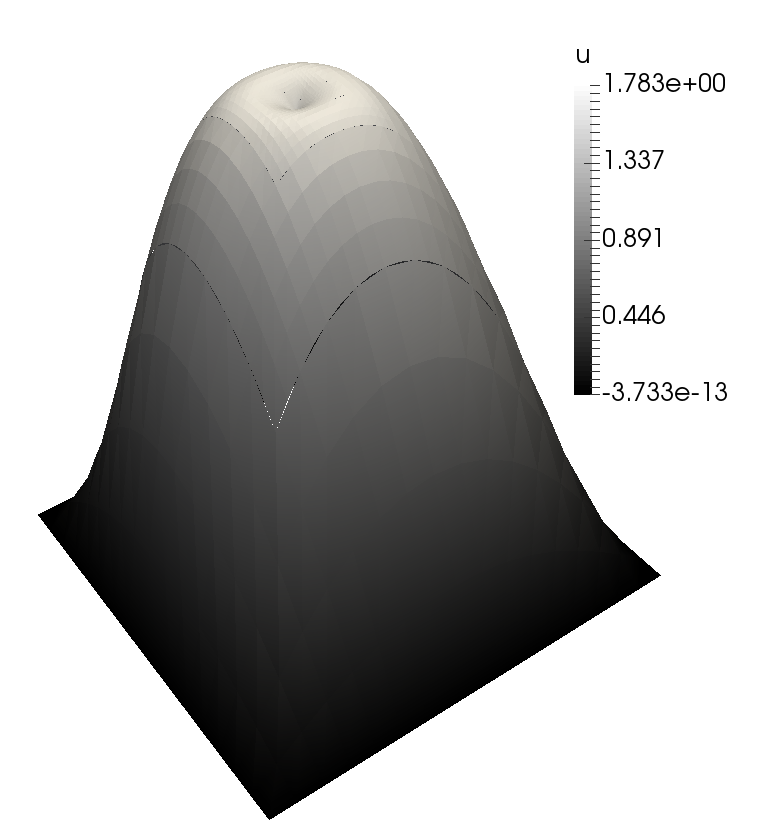}
  \caption{} \label{subfig:2d-lin-sol}
  \end{subfigure}\begin{subfigure}[t]{.5\textwidth}
  \centering
  \includegraphics[width=.7\textwidth]{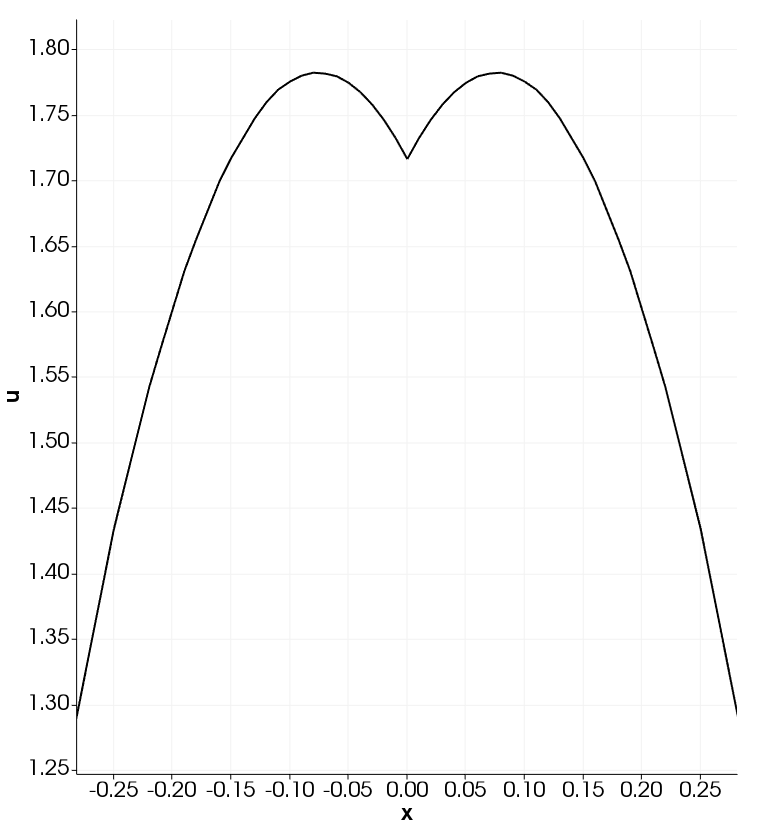}
  \caption{} \label{subfig:2d-lin-sol-proj}
  \end{subfigure}
  \caption{Numerical solution to \eqref{eq:2d-num-prob} with $V(x) = r^{-1}$. Figure \subref{subfig:2d-lin-sol}:
    representation vertically not to scale; the separation between some elements
  is an artifact of the visualization on grids with hanging nodes.
  Figure \subref{subfig:2d-lin-sol-proj}: close up around the singularity of the
  function $u(\cdot, 0)$, i.e., of $u$ on the line $\{y = 0\}$.}
\end{figure}
\begin{figure}
    \centering
    \begin{subfigure}{.5\textwidth}
    \includegraphics[width=\textwidth]{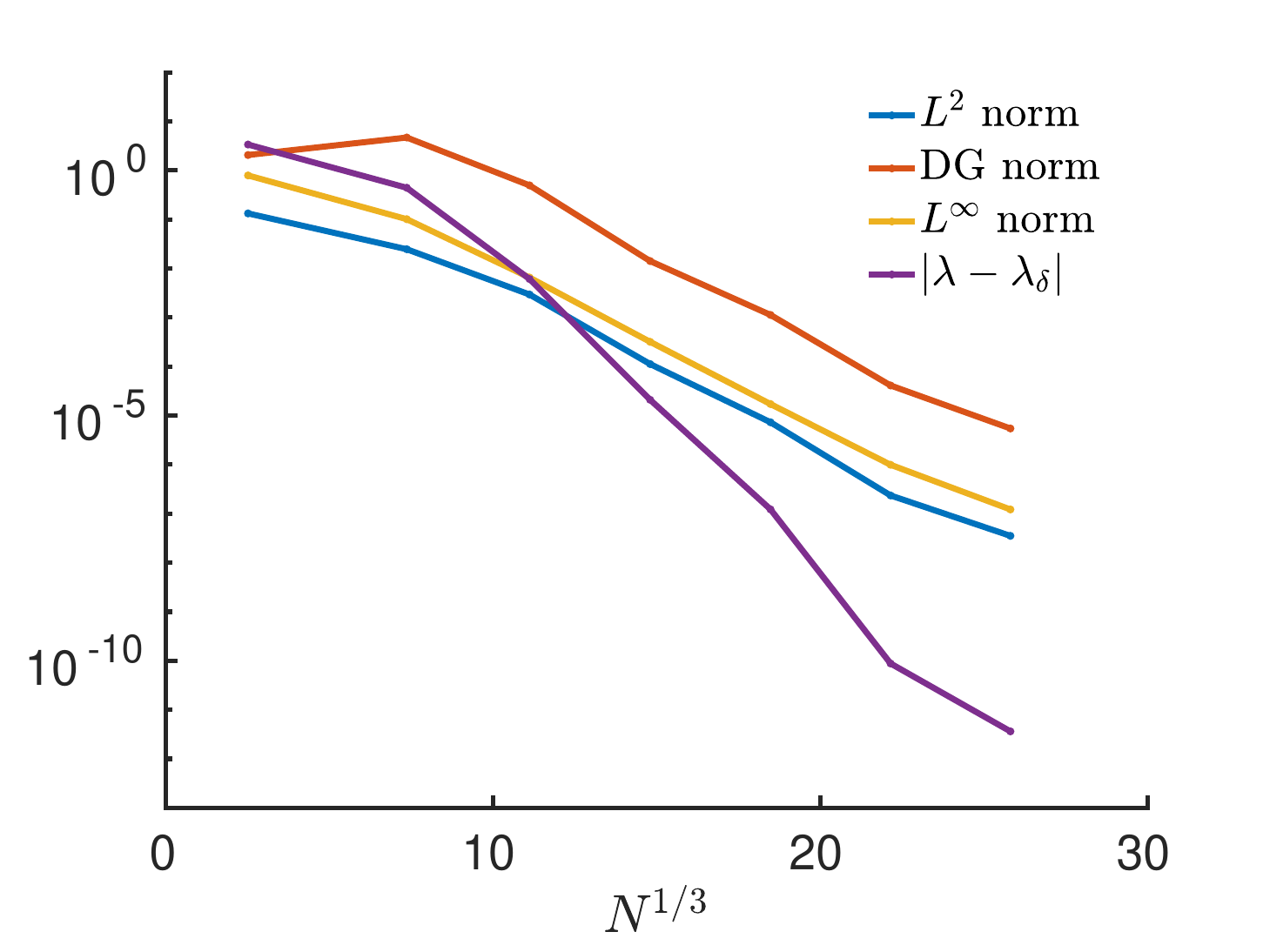}
    \caption{}\label{subfig:2d-lin-p050-s012}
    \end{subfigure}\begin{subfigure}{.5\textwidth}
    \includegraphics[width=\textwidth]{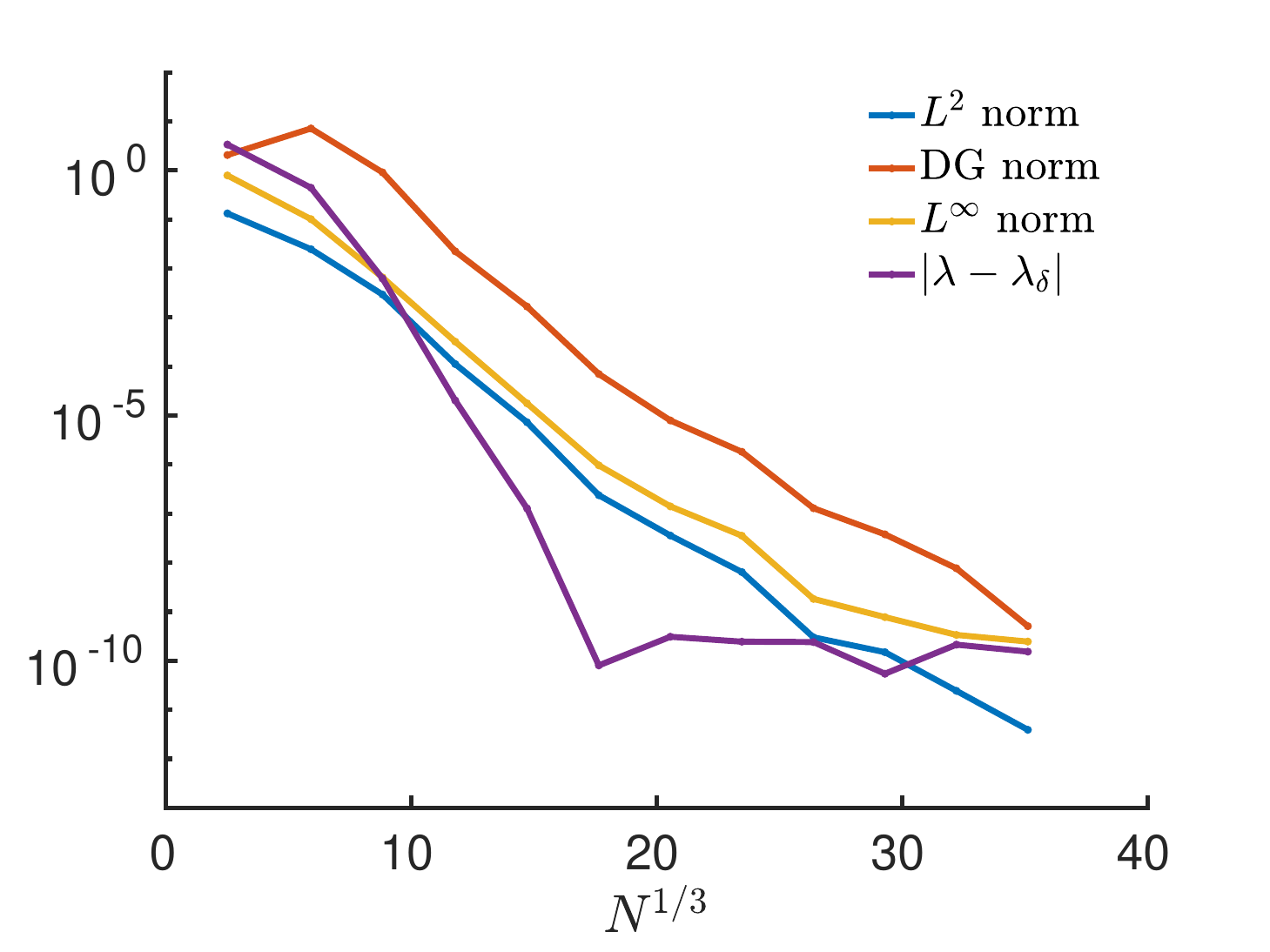}
    \caption{}\label{subfig:2d-lin-p050-s025}
    \end{subfigure}\\
    \begin{subfigure}{.5\textwidth}
    \includegraphics[width=\textwidth]{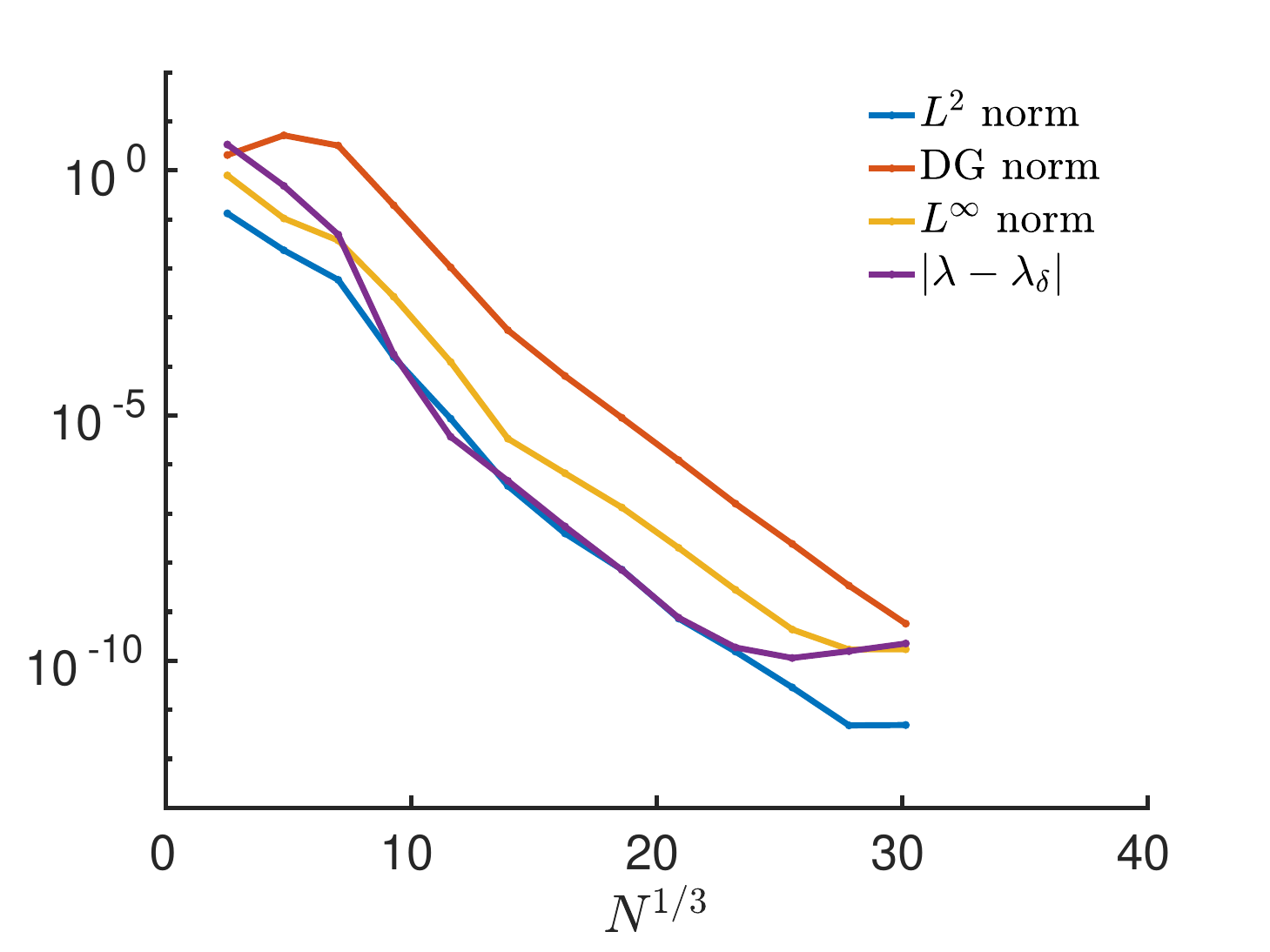}
    \caption{}\label{subfig:2d-lin-p050-s050}
    \end{subfigure}
    \caption{Errors for the numerical solution with potential $V(x) = r^{-1/2}$.
    Polynomial slope: $\slope = 1/8$ in Figure \subref{subfig:2d-lin-p050-s012};
    $\slope = 1/4$ in Figure \subref{subfig:2d-lin-p050-s025} and $\slope= 1/2$ in
  Figure \subref{subfig:2d-lin-p050-s050}.}\label{fig:2d-lin-p050}
\end{figure}
\begin{table}
\caption{Estimated coefficients. Potential: $r^{-1/2}$}
\label{table:2d-lin-p050-b}
\centering
\pgfplotstablevertcat{\output}{linear_results_pot-0_50_slope0_12_b} 
\pgfplotstablevertcat{\output}{linear_results_pot-0_50_slope0_25_b} 
\pgfplotstablevertcat{\output}{linear_results_pot-0_50_slope0_50_b} 
\pgfplotstabletypeset {\output}
\end{table}

\begin{figure}
    \centering
    \begin{subfigure}{.5\textwidth}
    \includegraphics[width=\textwidth]{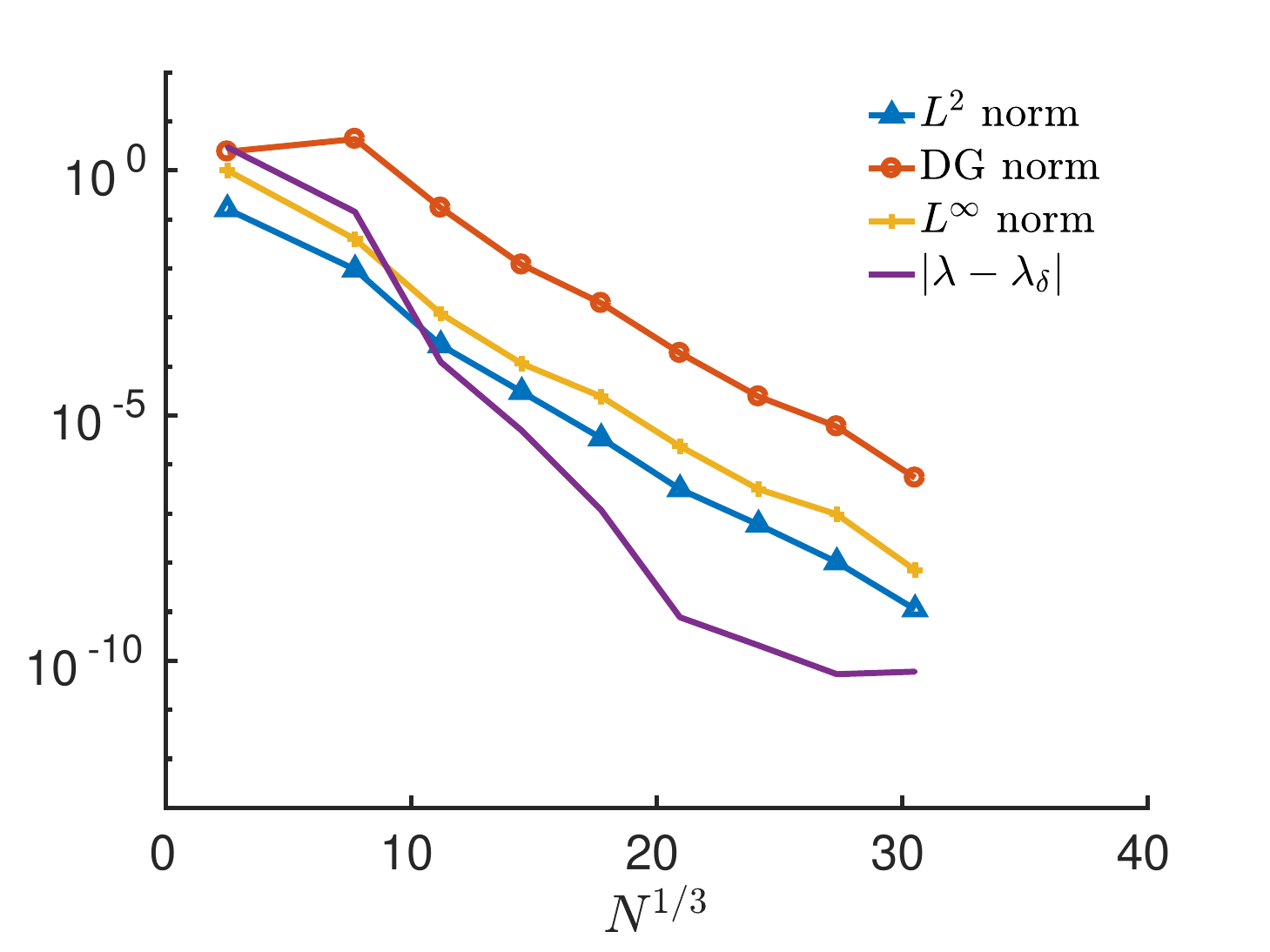}
    \caption{}\label{subfig:2d-lin-p100-s012}
    \end{subfigure}\begin{subfigure}{.5\textwidth}
    \includegraphics[width=\textwidth]{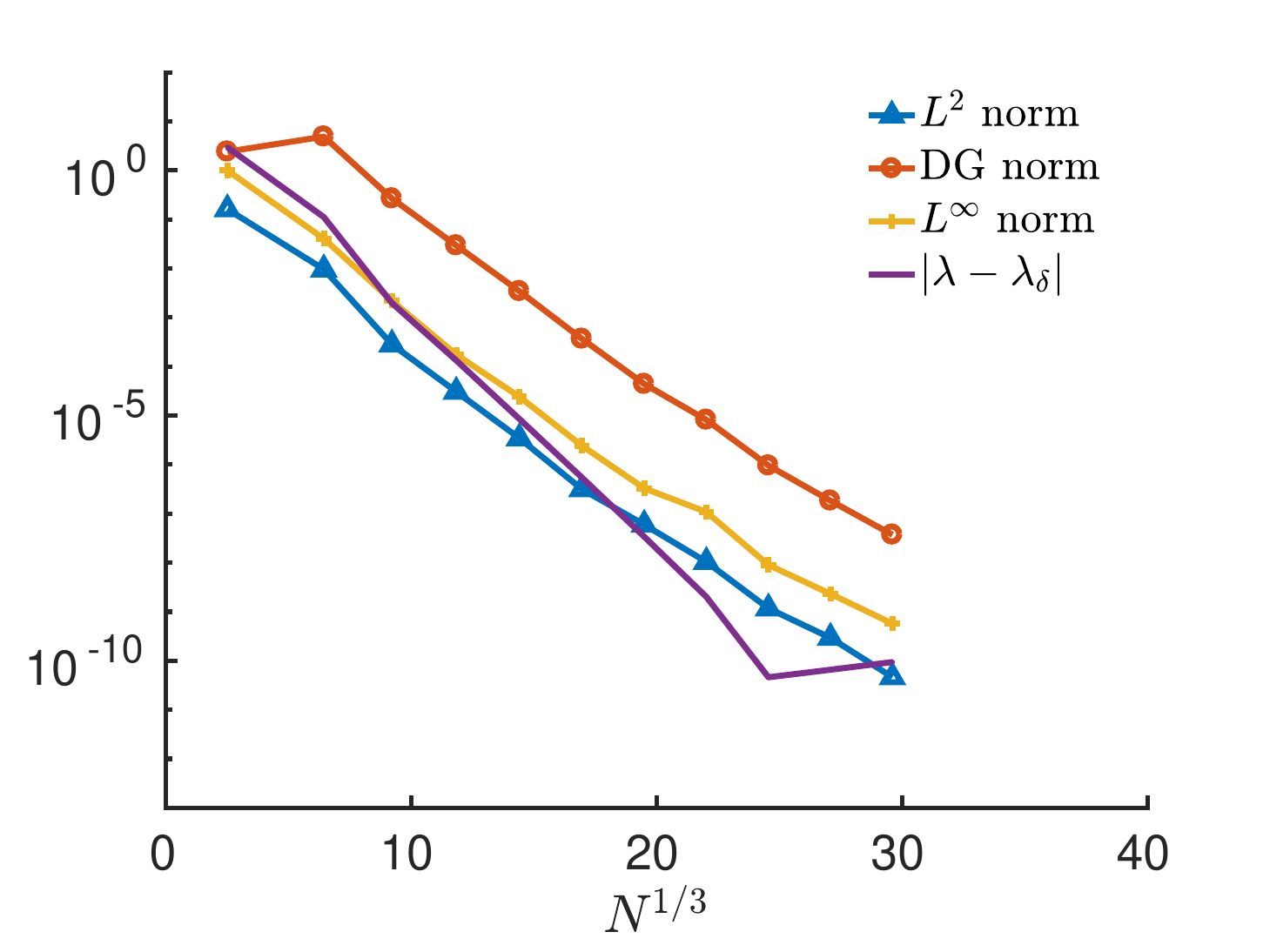}
    \caption{}\label{subfig:2d-lin-p100-s025}
    \end{subfigure}\\
    \begin{subfigure}{.5\textwidth}
    \includegraphics[width=\textwidth]{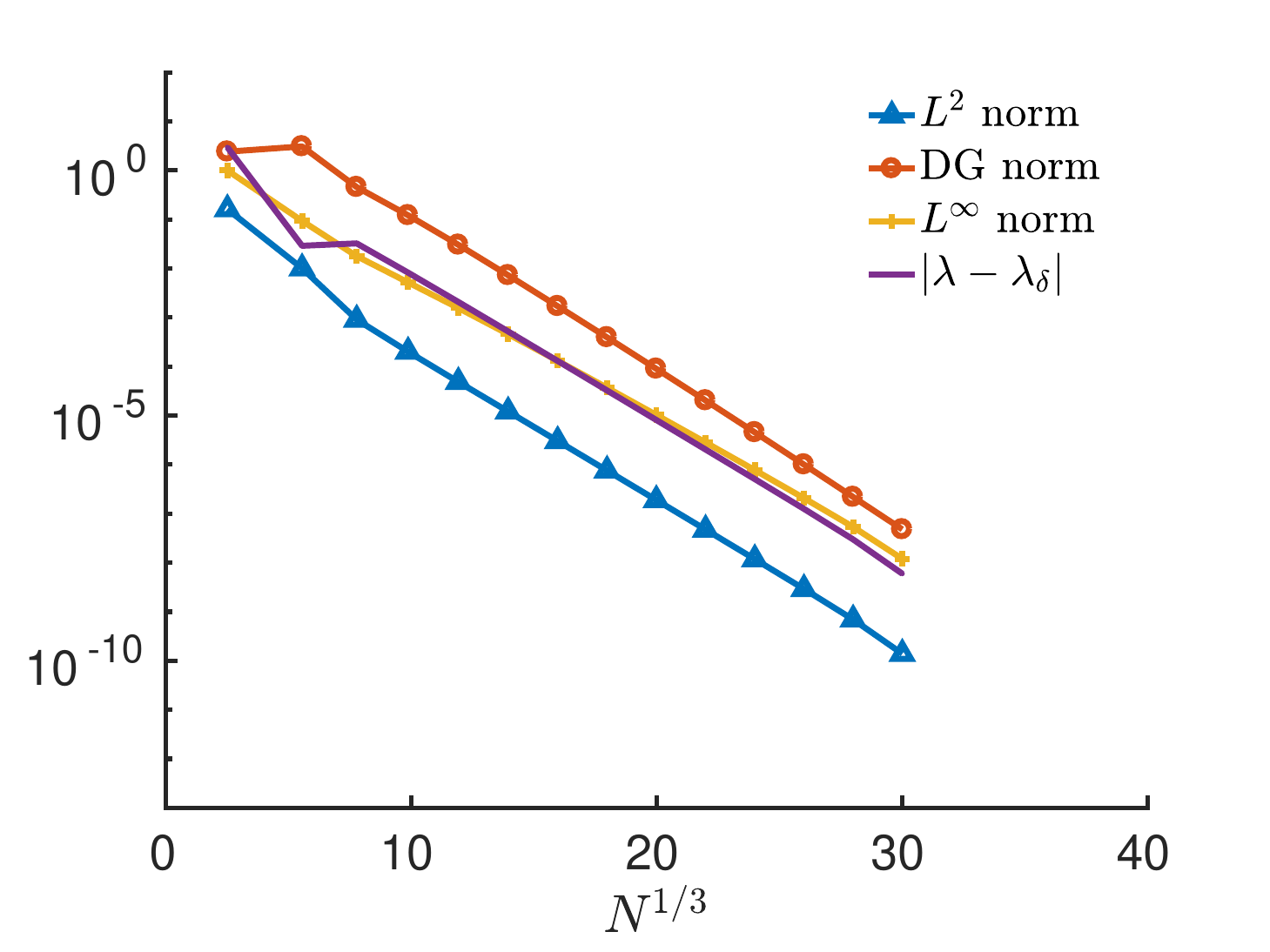}
    \caption{}\label{subfig:2d-lin-p100-s050}
    \end{subfigure}
    \caption{Errors for the numerical solution with potential $V(x) = r^{-1}$.
    Polynomial slope: $\slope = 1/8$ in Figure \subref{subfig:2d-lin-p100-s012};
    $\slope = 1/4$ in Figure \subref{subfig:2d-lin-p100-s025} and $\slope= 1/2$ in
  Figure \subref{subfig:2d-lin-p100-s050}.}\label{fig:2d-lin-p100}
\end{figure}
\begin{table}
\caption{Estimated coefficients. Potential: $r^{-1}$}
\label{table:2d-lin-p100-b}
\centering
\pgfplotstablevertcat{\output}{linear_results_pot-1_00_slope0_12_b} 
\pgfplotstablevertcat{\output}{linear_results_pot-1_00_slope0_25_b} 
\pgfplotstablevertcat{\output}{linear_results_pot-1_00_slope0_50_b} 
\pgfplotstabletypeset {\output}
\end{table}

\begin{figure}
    \centering
    \begin{subfigure}{.5\textwidth}
    \includegraphics[width=\textwidth]{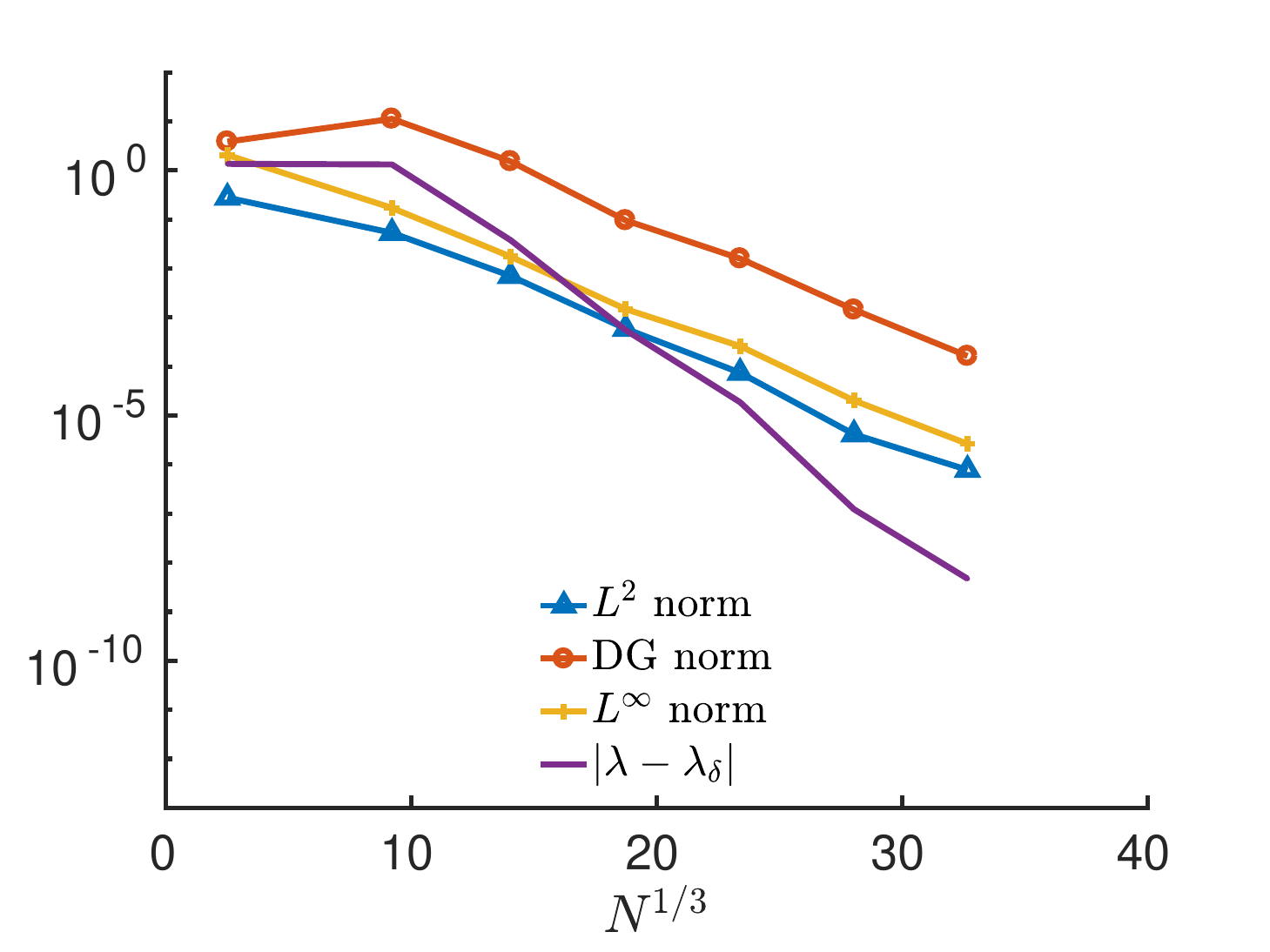}
    \caption{}\label{subfig:2d-lin-p150-s006}
    \end{subfigure}\begin{subfigure}{.5\textwidth}
    \includegraphics[width=\textwidth]{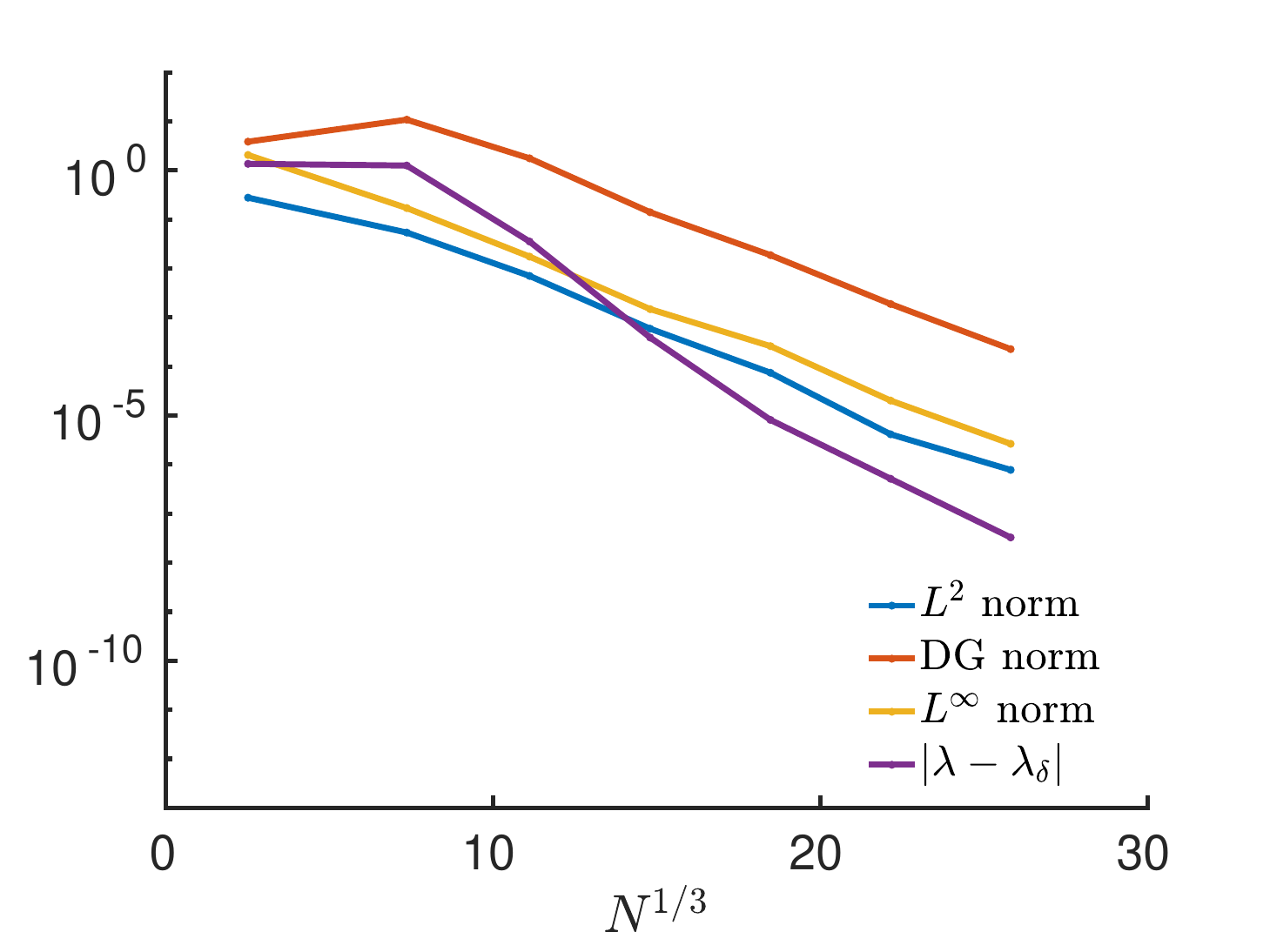}
    \caption{}\label{subfig:2d-lin-p150-s012}
    \end{subfigure}\\
    \begin{subfigure}{.5\textwidth}
    \includegraphics[width=\textwidth]{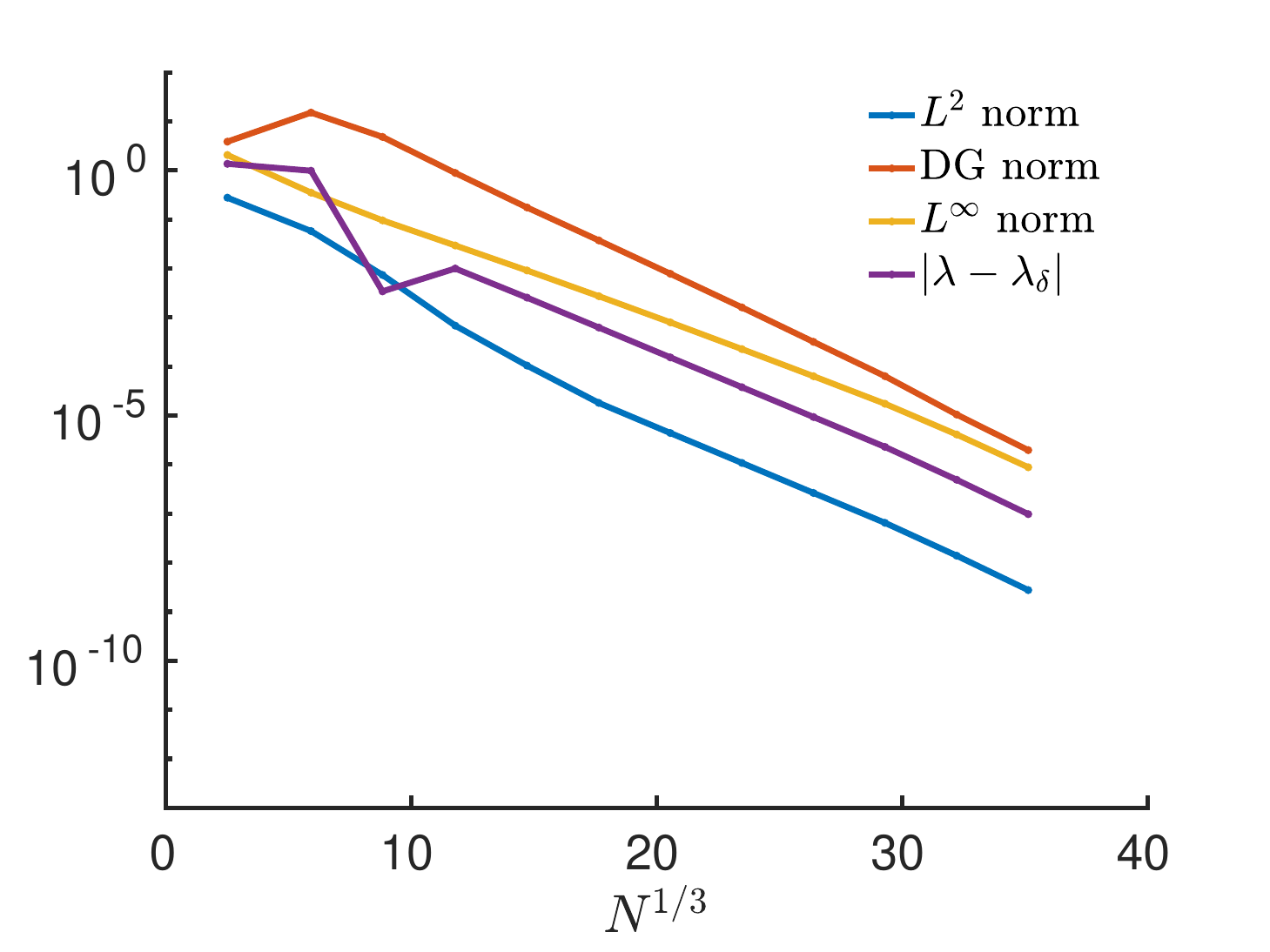}
    \caption{}\label{subfig:2d-lin-p150-s025}
    \end{subfigure}
    \caption{Errors for the numerical solution with potential $V(x) = r^{-3/2}$.
    Polynomial slope: $\slope = 1/16$ in Figure \subref{subfig:2d-lin-p150-s006};
    $\slope = 1/8$ in Figure \subref{subfig:2d-lin-p150-s012} and $\slope= 1/4$ in
  Figure \subref{subfig:2d-lin-p150-s025}.}\label{fig:2d-lin-p150}
\end{figure}
\begin{table}
\caption{Estimated coefficients. Potential: $r^{-3/2}$}
\centering
\pgfplotstablevertcat{\output}{linear_results_pot-1_50_slope0_06_b} 
\pgfplotstablevertcat{\output}{linear_results_pot-1_50_slope0_12_b} 
\pgfplotstablevertcat{\output}{linear_results_pot-1_50_slope0_25_b} 
\pgfplotstabletypeset {\output}
\label{table:2d-lin-p150-b}
\end{table}
\subsubsection{Analysis of the results}
The results on the error for the potential $V(x)  = r^{-1/2}$
are shown in Figure \ref{fig:2d-lin-p050}, and the estimated
coefficients are given in Table \ref{table:2d-lin-p050-b}.
Similarly, when the potential is given by $V(x) = r^{-1}$ the error curves are
in Figure \ref{fig:2d-lin-p100}, with coefficients $b_X$ in Table
\ref{table:2d-lin-p100-b}, and the case $V(x) = r^{-3/2}$ is reported in Figure \ref{fig:2d-lin-p150}
and Table \ref{table:2d-lin-p150-b}.

We can clearly see,
that in many cases the error reaches at some point a plateau; we estimate the coefficients
$b_X$ by linear regression on the points before the plateau. This will be done
for all subsequent potentials. Furthermore, as expected, the less regular the
potential, the slowest the convergence of the numerical solution.

Two phenomena are less expected from the point of view of the theory. The first
one is the emergence of a plateau at relatively high values compared to the
machine epsilon. Through the choice of different algebraic scheme, we can see
that we get a lower plateau: this is an indication that the dominating error at
the points where it is not converging to zero is the algebraic one. The fact
that matrices arising from the \hp{} method are ill conditioned explains the
size of the algebraic error. In practical applications, the fact that a relative
error of approximately $10^{-12}$ can be reached should be sufficient.

The second ``unexpected phenomenon'' is evident when looking at Figures
\ref{subfig:2d-lin-p050-s050}, \ref{subfig:2d-lin-p100-s025},
\ref{subfig:2d-lin-p150-s012}, and \ref{subfig:2d-lin-p150-s025}. We remark
that, after an initial part where the eigenvalue converges faster than the other
norms of the error, its rate of convergence then stabilizes to the same rate of
the other norms. This can be shown \cite{Cances2010} to be dependent on the quadrature formula
employed. When using a higher degree quadrature formula, the highest rate for
the eigenvalue error is recovered, see Figure \ref{fig:2d-lin-hq-p100-s025} and
Table \ref{table:2d-lin-hq-p100-b}, obtained with a higher quadrature formula and compare them with Figure \ref{subfig:2d-lin-p100-s025}
and Table \ref{table:2d-lin-p100-b}.
As a side effect of a higher quadrature order, the plateau is raised.

In
practice, one has to quite carefully balance computational cost, conditioning of
the matrix, and speed of convergence. The usefulness of this numerical
experiments lies therefore not only in the fact that we verify our theoretical
results and we see the impact of components of the error we did not account for
in the theoretical analysis, but also in the fact that we see, practically, how
the parameters affect the simulation for different exact solutions. Since by
asymptotic analysis we can see, locally and \emph{a priori}, how the solution of
a problem behaves, this gives an indication on how to construct and locally
\emph{a priori} optimize the \hp{} spaces.

\begin{figure}
  \centering
  \includegraphics[width = .5\textwidth]{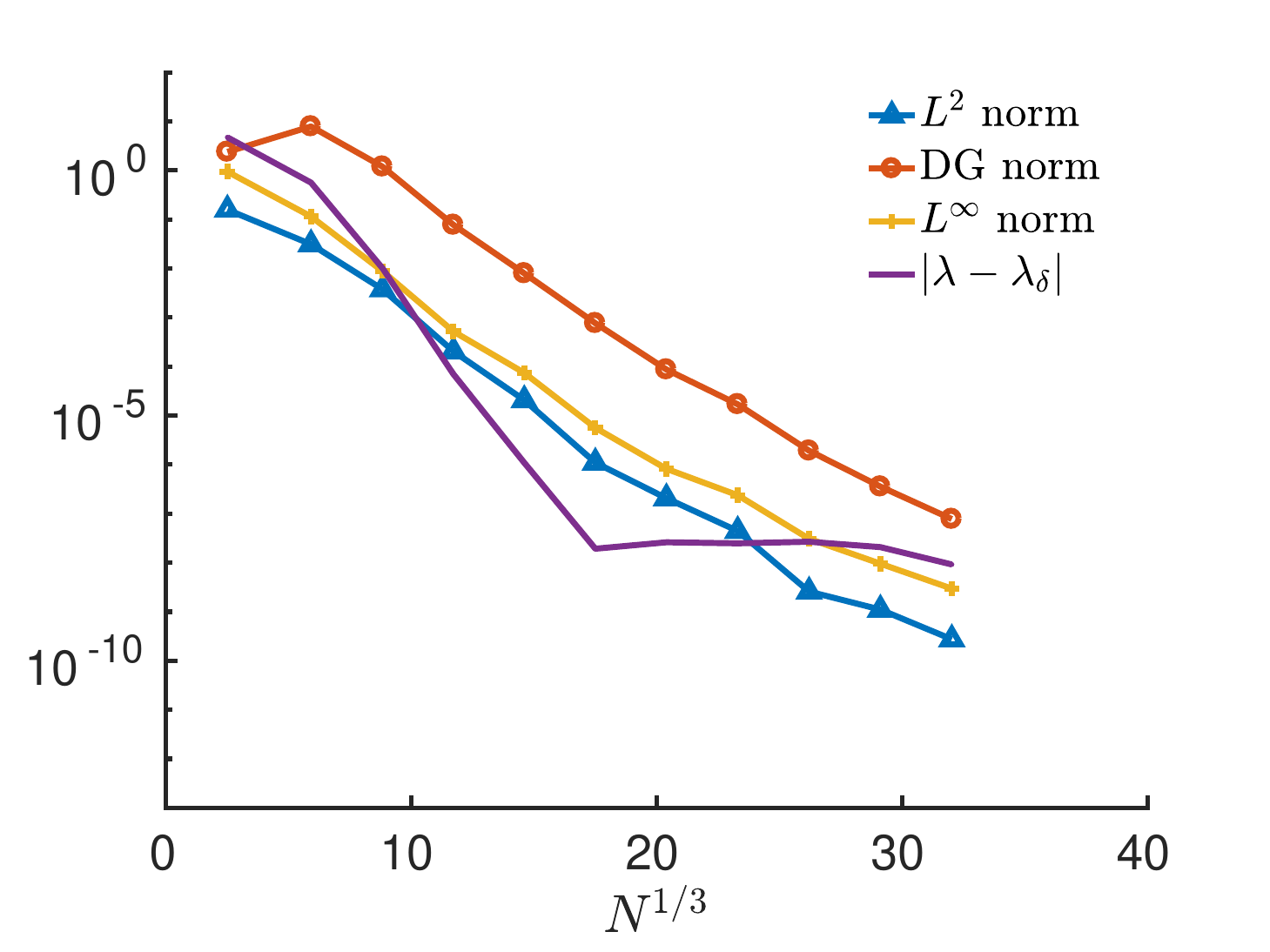}
  \caption{Errors of the numerical solution for $V(x) = r^{-1}$ and a high degree
    quadrature formula. Polynomial slope $\slope = 0.25$.}
  \label{fig:2d-lin-hq-p100-s025}
\end{figure}
\begin{table}
\caption{Estimated coefficients. Potential: $r^{-1}$, high degree quadrature formula}
\centering
\pgfplotstabletypeset {linear_highquad_results_pot-1_00_slope0_25_b}
\label{table:2d-lin-hq-p100-b}
\end{table}
\subsection{Three dimensional case}
In the three dimensional case, we replicate the setting introduced in Section
\ref{sec:num-lin-2d}. In this case, $\Omega = (-1/2, 1/2)^3$. Note that the
regularity of the solution of
\begin{align*}
  (-\Delta + r^{-\alpha }) u_\alpha  &= \lambda_\alpha u_\alpha\text{ in }\Omega\\
  u_\alpha &= 0\text{ on }\dOmega,
\end{align*}
scales differently with respect to $\alpha$, if compared to the two dimensional
case. Specifically, we have
\begin{equation*}
  u_\alpha \in H^{7/2-\alpha-\xi}(\Omega)
\end{equation*}
and
\begin{equation*}
  u_\alpha \in \mathcal{J}^\varpi_{7/2-\alpha-\xi}(\Omega),
\end{equation*}
for any $\xi>0$.
\begin{figure}
  \centering
  \includegraphics[width=.6\textwidth]{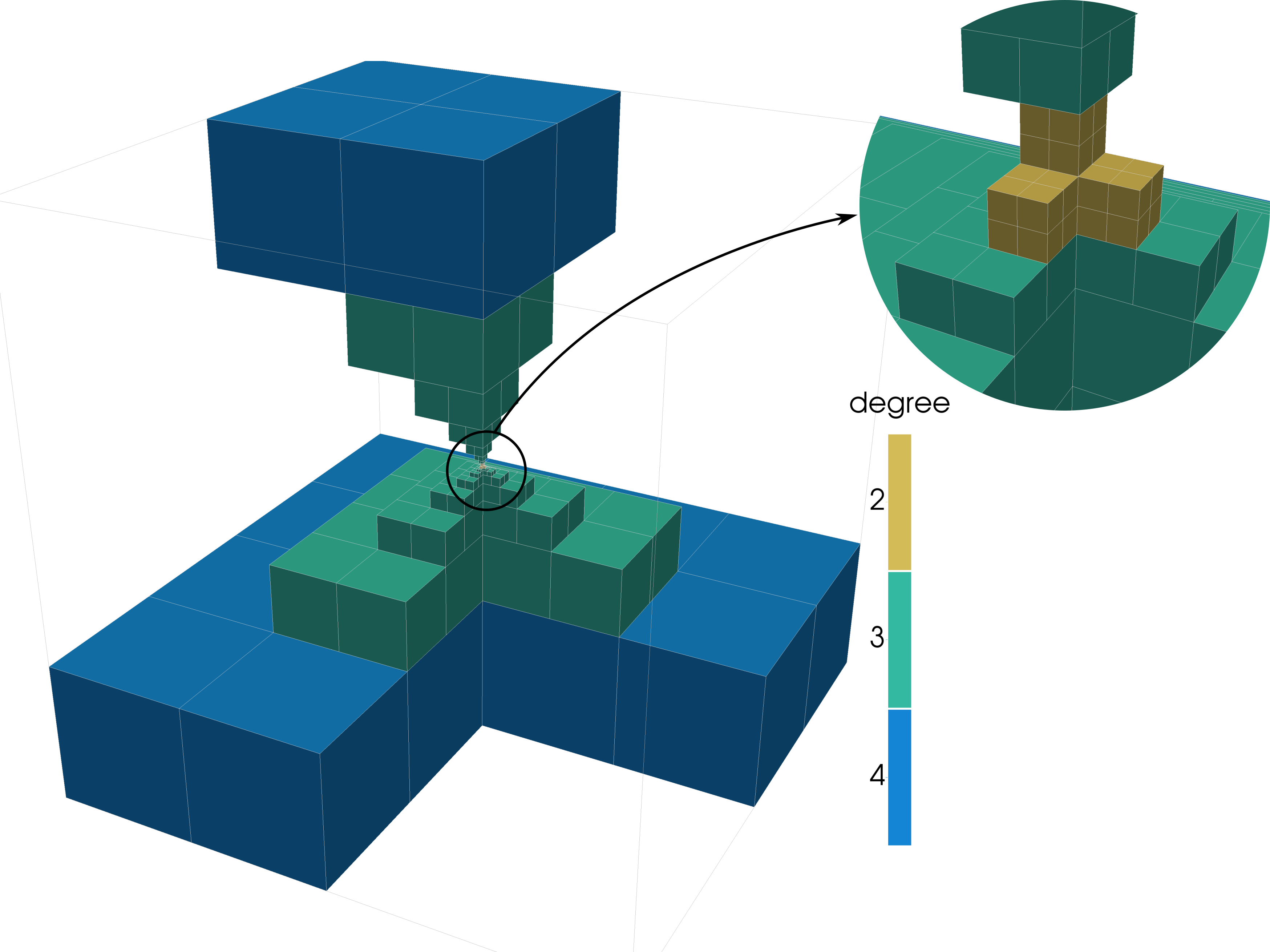}
  \caption{Example mesh for the three dimensional approximation}
  \label{fig:3d-mesh}
\end{figure}

\begin{figure}
  \centering
  \begin{subfigure}[t]{.5\textwidth}
  \centering
  \includegraphics[width=\textwidth]{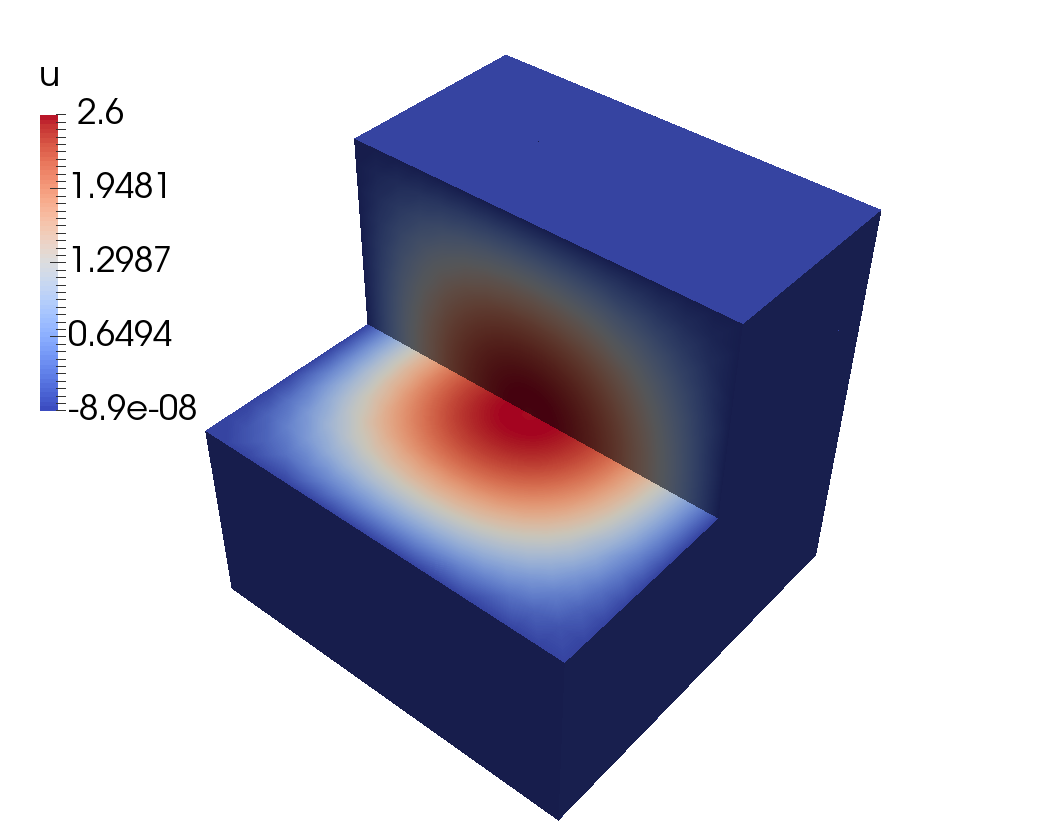}
    \end{subfigure}\begin{subfigure}[t]{.5\textwidth}
  \centering
  \includegraphics[width=.7\textwidth]{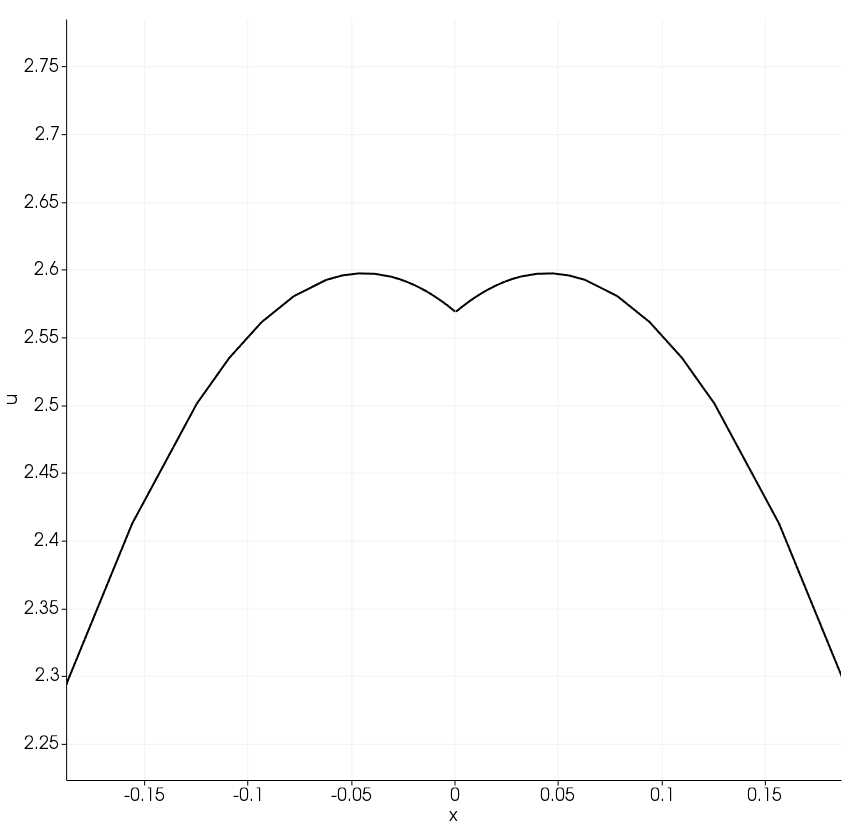}
    \end{subfigure}
  \caption{Numerical solution in the three dimensional case: solution in the
    cube, left, and close up near the origin of the restriction to the line $\{y= z= 0\}$, right}
  \label{fig:3d-lin-sol}
\end{figure}
The mesh is built in a tensor product way as in Section \ref{sec:num-lin-2d},
with refinement ratio $\sigma = 1/2$. A representation of a mesh is given in
Figure \ref{fig:3d-mesh}. The numerical solution for $V(x) = r^{-1}$ is shown
in Figure \ref{fig:3d-lin-sol}.

From the algebraic point of view, the assembled matrices are bigger in size and
less sparse, thus a direct LU method is less feasible than in the previous case
(up to completely unfeasible for the simulations with a high number of degrees
of freedom). Hence, we turn to iterative methods, and try to employ an
algebraic eigenvalue method that is not too sensible to the error introduced by
the linear solver. Therefore, the search for the eigenvalues is done with a
Jacobi-Davidson method \cite{Sleijpen1996}. 
Internally, we employ a biconjugate gradient stabilized method (BiCGS, \cite{VanderVorst1992,  Sleijpen1994})
as a linear solver, with simple Jacobi preconditioner. The tolerance for the
linear solver is set at $10^{-6}$, while the tolerance of the Jacobi-Davidson
method is set at $10^{-8}$.

\begin{figure}
    \centering
    \begin{subfigure}{.5\textwidth}
    \includegraphics[width=\textwidth]{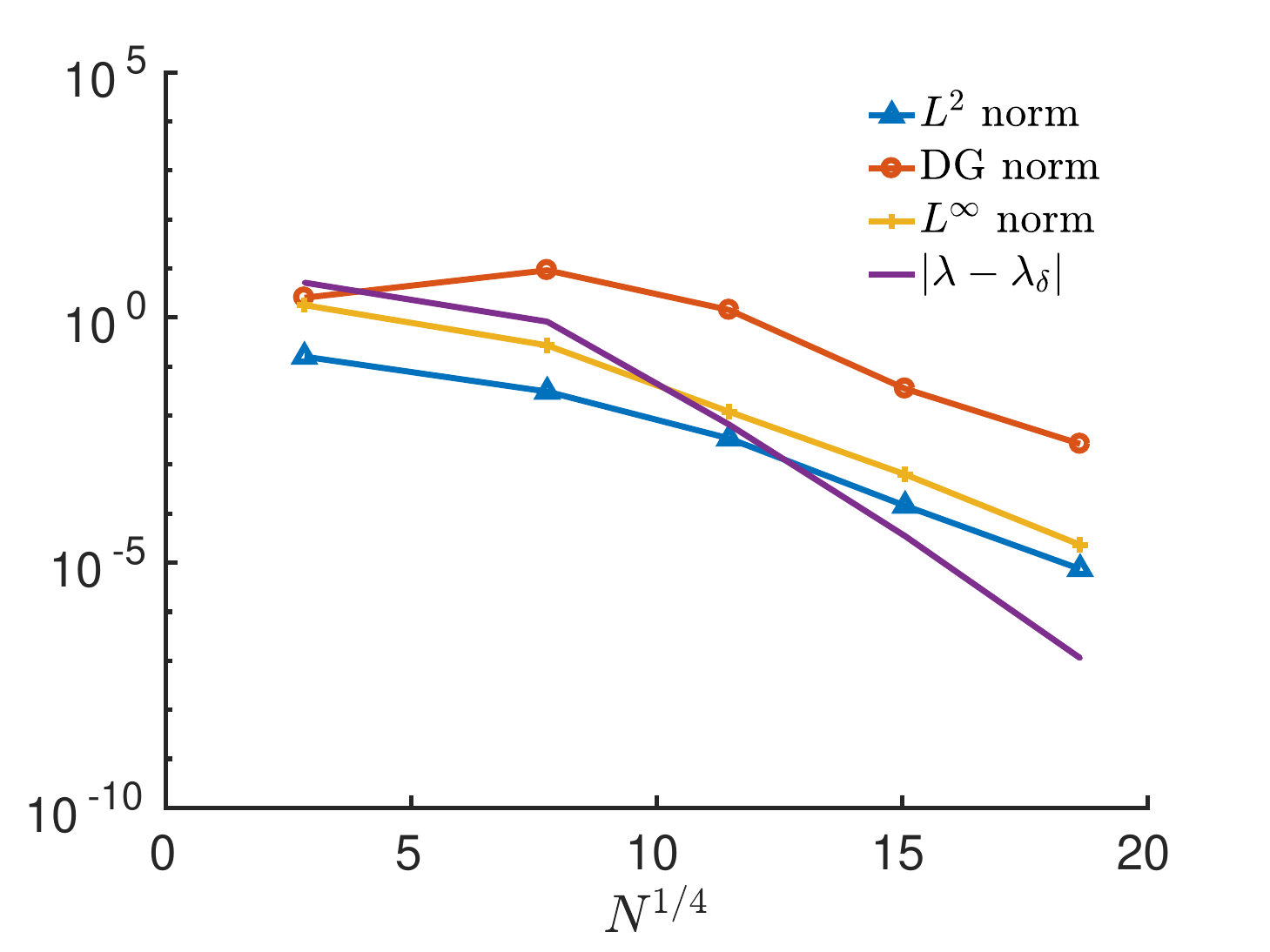}
        \end{subfigure}\begin{subfigure}{.5\textwidth}
    \includegraphics[width=\textwidth]{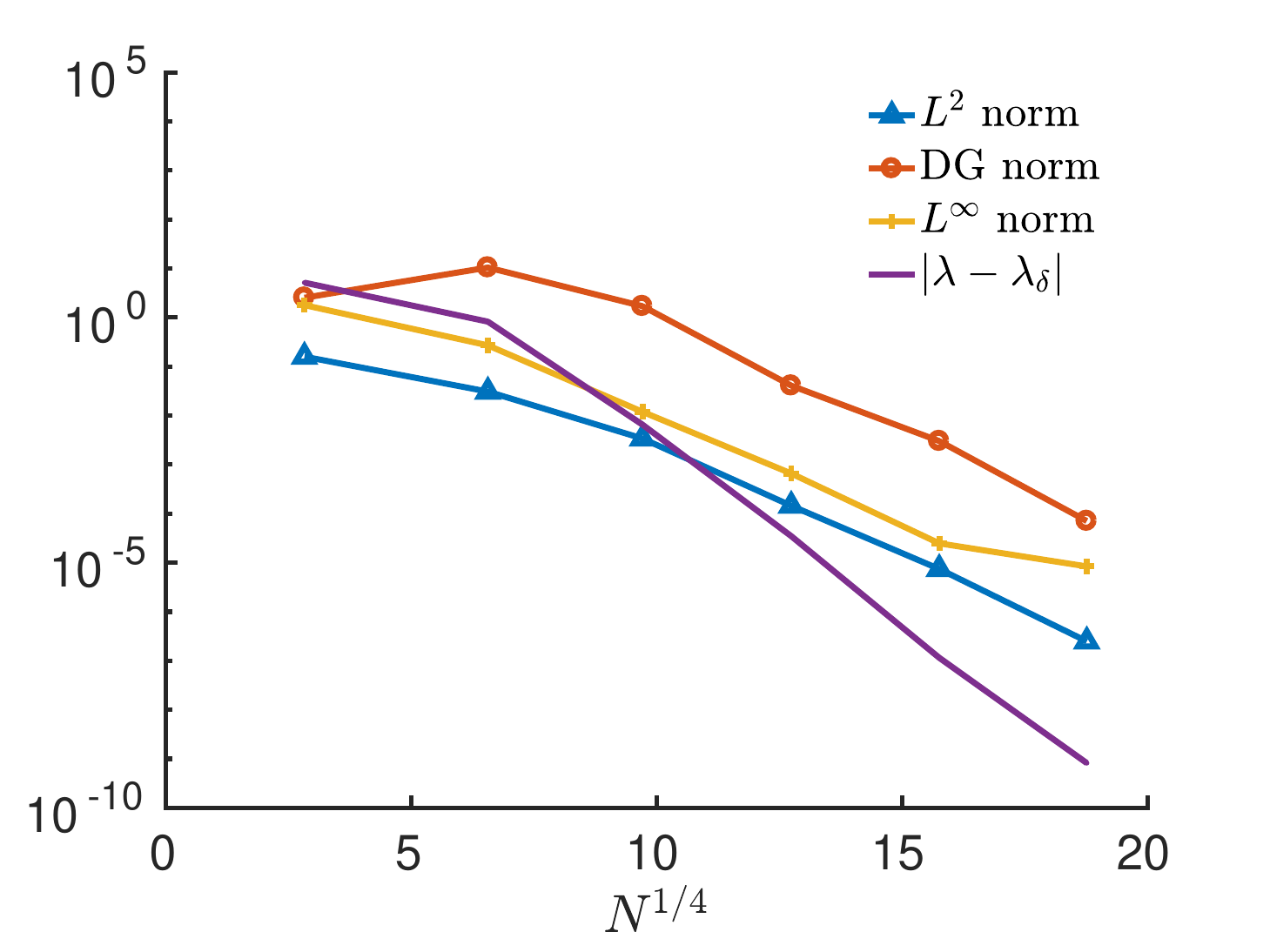}
      \end{subfigure}
  \caption{Errors of the numerical solution for $V(x) = r^{-1/2}$. Polynomial
    slope $\slope = 1/8$, left and $\slope = 1/4$, right.}
  \label{fig:3d-lin-p050}
\end{figure}
\begin{figure}
    \centering
    \begin{subfigure}{.5\textwidth}
    \includegraphics[width=\textwidth]{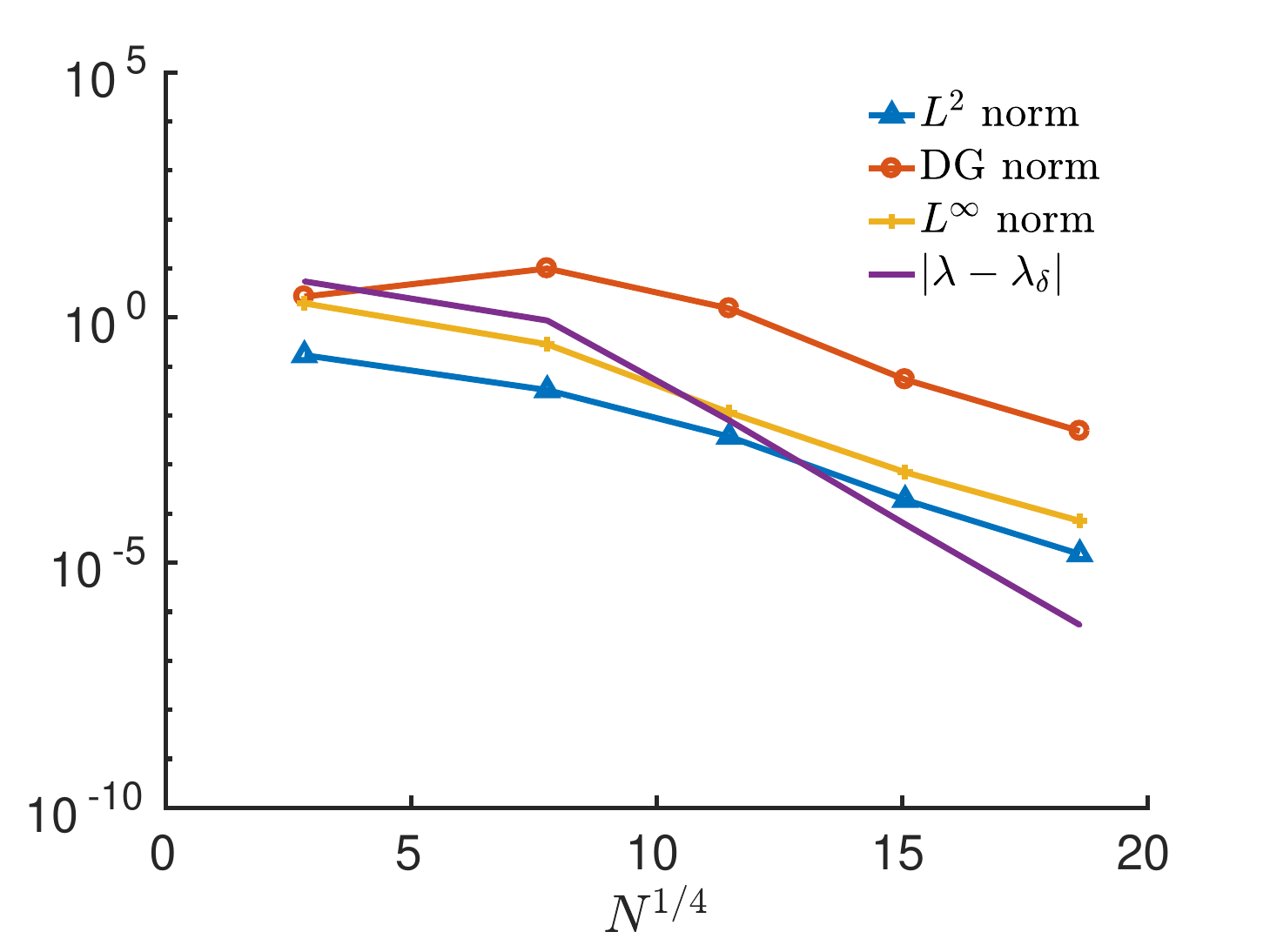}
        \end{subfigure}\begin{subfigure}{.5\textwidth}
    \includegraphics[width=\textwidth]{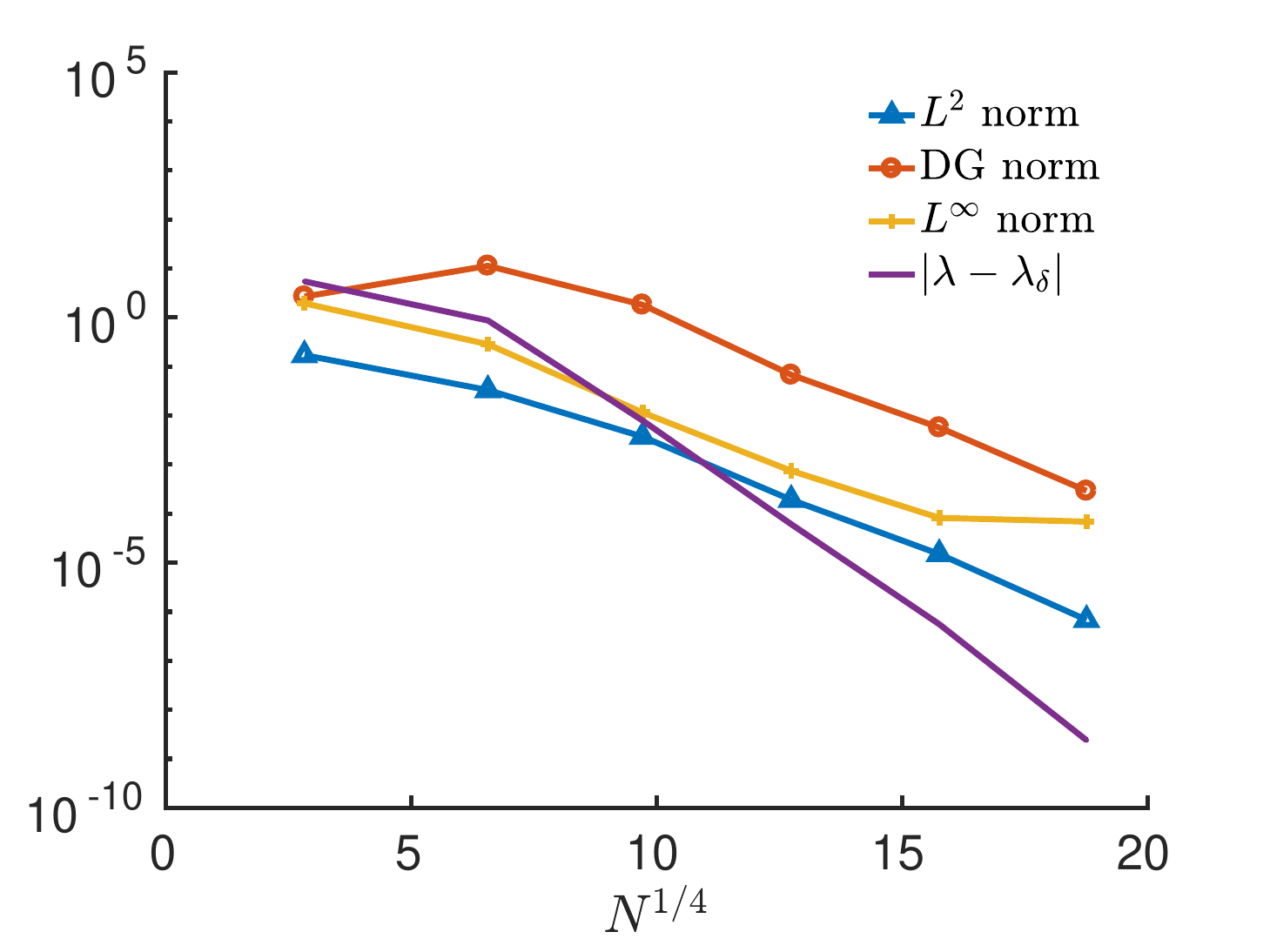}
      \end{subfigure}
    \caption{Errors of the numerical solution for $V(x) = r^{-1}$. Polynomial
    slope $\slope = 1/8$, left and $\slope = 1/4$, right.}
  \label{fig:3d-lin-p100}
\end{figure}
\begin{figure}
    \centering
    \begin{subfigure}{.5\textwidth}
    \includegraphics[width=\textwidth]{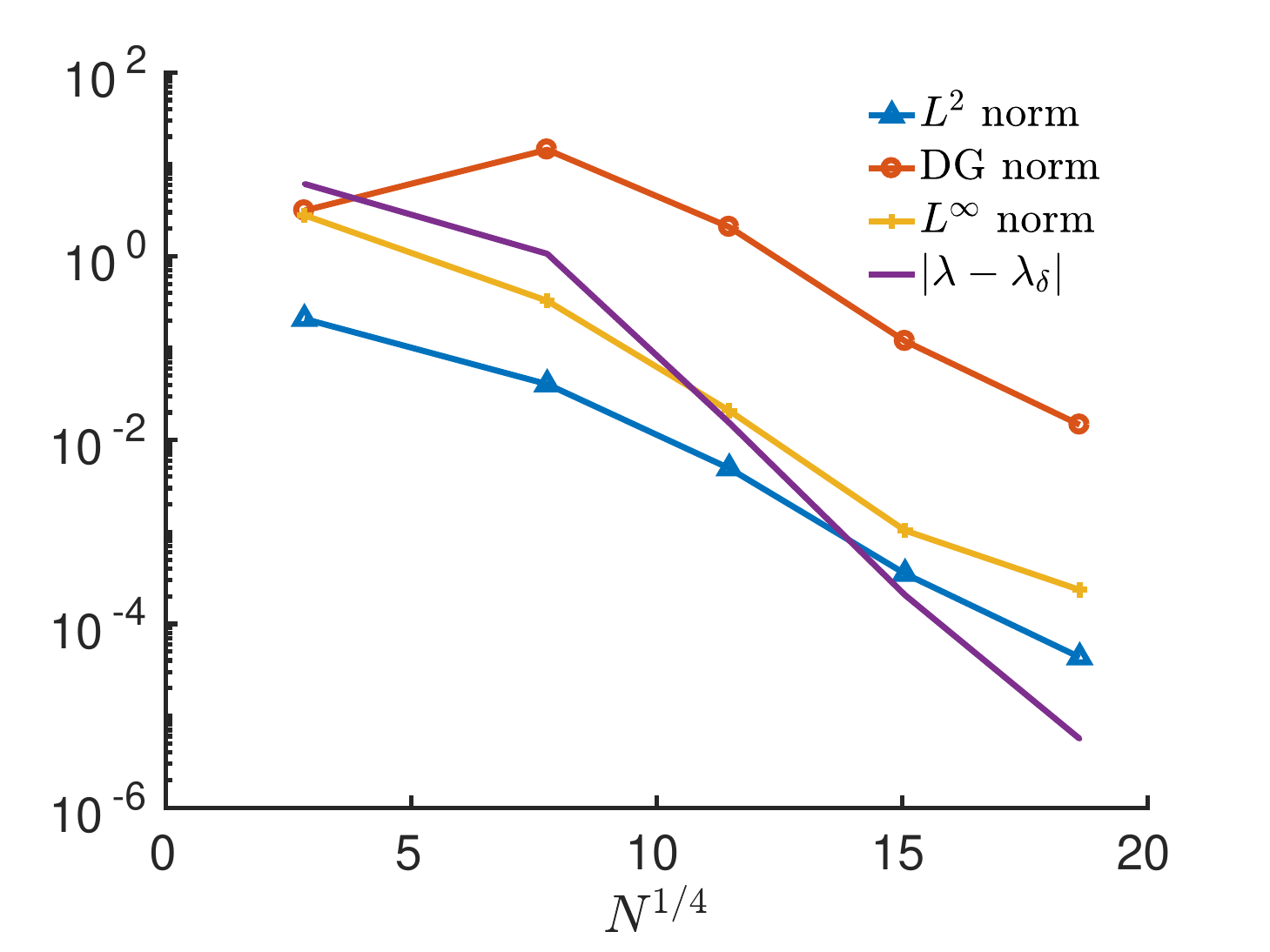}
        \end{subfigure}\begin{subfigure}{.5\textwidth}
    \includegraphics[width=\textwidth]{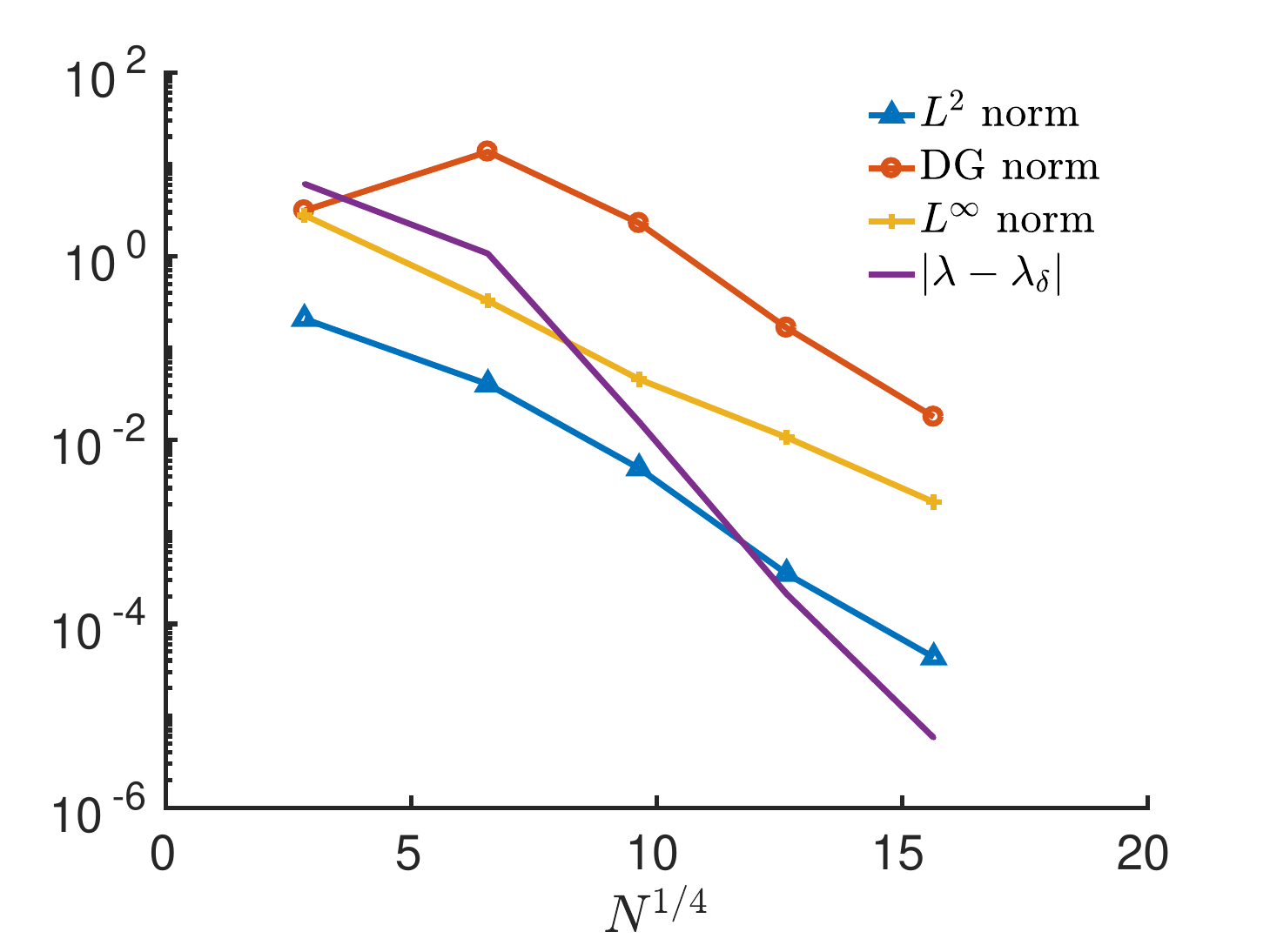}
      \end{subfigure}
    \caption{Errors of the numerical solution for $V(x) = r^{-3/2}$. Polynomial
    slope $\slope = 1/8$, left and $\slope = 1/4$, right.}
  \label{fig:3d-lin-p150}
\end{figure}

\begin{table}
\caption{Estimated coefficients. Potential: $r^{-1/2}$}
  \label{table:3d-lin-p050}
\centering
\pgfplotstablevertcat{\output}{linear3d_results_pot-0_50_slope0_12_b} 
\pgfplotstablevertcat{\output}{linear3d_results_pot-0_50_slope0_25_b} 
\pgfplotstabletypeset {\output}
\end{table}
\begin{table}
\caption{Estimated coefficients. Potential: $r^{-1}$}
  \label{table:3d-lin-p100}
\centering
\pgfplotstablevertcat{\output}{linear3d_results_pot-1_00_slope0_12_b} 
\pgfplotstablevertcat{\output}{linear3d_results_pot-1_00_slope0_25_b} 
\pgfplotstabletypeset {\output}
\end{table}
\begin{table}
\caption{Estimated coefficients. Potential: $r^{-3/2}$}
  \label{table:3d-lin-p150}
\centering
\pgfplotstablevertcat{\output}{linear3d_results_pot-1_50_slope0_12_b} 
\pgfplotstablevertcat{\output}{linear3d_results_pot-1_50_slope0_25_b} 
\pgfplotstabletypeset {\output}
\end{table}
\subsubsection{Analysis of the results}
Results for $V(x) = r^{-1/2}$ are given in Figure \ref{fig:3d-lin-p050} and
Table \ref{table:3d-lin-p050}, while the case $V(x) = r^{-1}$ is analyzed in Figure \ref{fig:3d-lin-p100} and
Table \ref{table:3d-lin-p100} and the errors and estimates when $V(x) =
r^{-3/2}$ are shown in Figure \ref{fig:3d-lin-p150} and Table
\ref{table:3d-lin-p150}. The three dimensional approximation has far more degrees of
freedom than the two dimensional one for a given level of refinement $\ell$, thus the
results we show have lower levels of refinement than the two dimensional ones.
This is partially balanced by the fact that the solutions are more regular, but
the errors are still obviously higher than those of the two dimensional case, at
the same number of degrees of freedom.
In the three dimensional case, we do not see a great effect neither of the
algebraic error nor of the quadrature formulas. The coefficients $b_\lambda$
listed in Tables \ref{table:3d-lin-p050} to \ref{table:3d-lin-p150} are almost the
double of the respective coefficients $b_\mathrm{DG}$; thus, if the effect of
the quadrature error is present, it is nonetheless negligible compared to
other sources of error for the quite comprehensive potentials and polynomial slopes considered in this
experiments.
\FloatBarrier
\bibliographystyle{amsalpha-abbrv}
\newcommand{\etalchar}[1]{$^{#1}$}
\newcommand{\noopsort}[1]{}
\providecommand{\bysame}{\leavevmode\hbox to3em{\hrulefill}\thinspace}
\providecommand{\MR}{\relax\ifhmode\unskip\space\fi MR }
\providecommand{\MRhref}[2]{  \href{http://www.ams.org/mathscinet-getitem?mr=#1}{#2}
}
\providecommand{\href}[2]{#2}

\end{document}